\numberwithin{equation}{section}
\newtheorem{theorem}{Theorem}[section]
\newtheorem{corollary}[theorem]{Corollary}
\newtheorem{lemma}[theorem]{Lemma}
\newtheorem{proposition}[theorem]{Proposition}
\newtheorem{Remark}[theorem]{Remark}
\begin{document}

\title{The Logarithmic Sobolev Inequality for Gibbs measures on  infinite product of Heisenberg groups.}
\author{Ioannis Papageorgiou\thanks{Department of Mathematics, Uppsala University, P.O Box 480, Uppsala 751 06, Sweden.  Email: ioannis.papageorgiou@math.uu.se,  papyannis@yahoo.com}}

\date{}

\maketitle

\begin{abstract}
 We are interested in the $q$ Logarithmic Sobolev inequality for probability measures on the infinite product of Heisenberg groups.  We assume that the one site boundary free measure satisfies either a $q$ Log-Sobolev inequality or a U-Bound inequality, and we determine conditions so that the infinite dimensional Gibbs measure satisfies a $q$ Log-Sobolev inequality.  \end{abstract}

\noindent
\textbf{Mathematics Subject Classification (2010)} 60E15\textperiodcentered 26D10\textperiodcentered39B62\textperiodcentered22E30\textperiodcentered82B20

\section{Introduction}
We focus on the $q$ Logarithmic Sobolev Inequality  (LSq) for the infinite dimensional  Gibbs measure related to systems of  spins with values in the Heisenberg group.  More specifically, we extend the already
 known results for real valued spins with  interactions $V$ that satisfy $\left\Vert \nabla_i \nabla_j V(x_i,x_j) \right\Vert_{\infty}\leq\infty$     to the case of  the Heisenberg group.
We investigate two cases, that of a boundary free  one site measure that  satisfies an (LSq) inequality and that of a one site measure with interactions that satisfies a non uniform U-Bound inequality. In both cases we determine conditions so that the infinite dimensional Gibbs measure satisfies an (LSq) inequality.

In this section we present the main definitions as well as some of the most relevant past results concerning the Log-Sobolev inequality on the Heisenberg group and the infinite dimensional setting. 
\subsection{(LSq) on the Heisenberg Group.}
The Heisenberg group is one of the simplest sub-Riemannian settings in which we can define non-elliptic H\"ormander type generators.  However, it has the disadvantage that   when we consider coercive inequalities the fundamental methods for proving the Log-Sobolev inequality for the one site measure  $\mathbb{E}^{\{i\},\omega}$, mainly the  $\Gamma_{2}$ criterion, developed in \cite{B} (see also \cite{B-E})  cannot be applied easily in this case   (see \cite{B-B-B-C}).

~

 \noindent\textbf{\textit{The Heisenberg group.}}
The Heisenberg group, $\mathbb{H}$,  can be described as $\mathbb{R}^3$ with the following group operation:
$$x\cdot\tilde{x} = (x_1, x_2, x_3)\cdot(\tilde{x}_1, \tilde{x}_2, \tilde{x}_3) = (x_1 + \tilde{x}_1, x_2 + \tilde{x}_2, x_3 + \tilde{x}_3 + \frac{1}{2}(x_1\tilde{x}_2 - x_2\tilde{x}_1))$$
$\mathbb{H}$ is a Lie group, and its Lie algebra $\mathfrak{h}$ can be identified with the space of left invariant vector fields on $\mathbb{H}$ in the standard way.  By direct computation we see that this space is spanned by 
\begin{eqnarray}
X_1 &=& \partial_{x_1} - \frac{1}{2}x_2\partial_{x_3} \nonumber \\
X_2 &=& \partial_{x_2} + \frac{1}{2}x_1\partial_{x_3} \nonumber \\
X_3 &=& \partial_{x_3} = [X_1, X_2] \nonumber
\end{eqnarray}
From this it is clear that $X_1, X_2$ satisfy the H\"ormander condition (i.e. $X_1, X_2$ and their commutator $[X_1, X_2]$ span the tangent space at every point of $\mathbb{H}$).  It is also easy to check that the left invariant Haar measure (which is also the right invariant measure since the group is nilpotent) is the Lebesgue measure $dx$ on $\mathbb{R}^3$. 
 
On $C^\infty_0(\mathbb{H})$, define the \textit{sub-gradient} to be the operator given by
$$\nabla := (X_1,X_2)$$ 
and the \textit{sub-Laplacian} to be the second order operator given by 
$$\Delta := X_1^2 + X_2^2$$
$\nabla$ can be treated as a closed operator from $L^2(\mathbb{H}, dx)$ to $L^2(\mathbb{H}; \mathbb{R}^2, dx)$.  Similarly, since $\Delta$ is densely defined and symmetric in $L^2(\mathbb{H}, dx)$, we may treat $\Delta$ as a closed self-adjoint operator on $L^2(\mathbb{H}, dx)$ by taking the Friedrich extension.  
We introduce the Logarithmic Sobolev Inequality on $\mathbb{H}$ in the following way.

~

\textit{The $q$ Log-Sobolev Inequality  (LSq) on $\mathbb{H}$.}
Let $q\in(1,2]$, and let $\mu$ be a probability measure on $\mathbb{H}$.  $\mu$ is said to satisfy a \textit{$q$ Logarithmic Sobolev inequality} $(LSq)$ on $\mathbb{H}$ if there exists a constant $c>0$ such that for all smooth functions $f: \mathbb{H}\to \mathbb{R}$
\begin{equation}
\label{eq21IP}
\mu\left(|f|^q\log\frac{|f|^q}{\mu|f|^q}\right) \leq c\mu\left(|\nabla f|^q\right)
\end{equation}
where $\nabla$ is the sub-gradient on $\mathbb{H}$.
\begin{Remark}
Since we have the sub-gradient on the right hand side, \eqref{eq21IP} is a Logarithmic Sobolev inequality corresponding to a H\"ormander type generator.  Indeed,  if $\mu(dx) = \frac{e^{-U}}{Z}dx$ then it is clear that $\mathcal{L} = \Delta - \nabla U.\nabla$ is a Dirichlet operator satisfying
\[
\mu\left(f\mathcal{L}f\right) = - \mu\left(|\nabla f|^2\right)
\]
where $\Delta$ is the sub-Laplacian, and $\nabla$ the sub-gradient.
\end{Remark}
In \cite{H-Z} the authors were able to show that a related class of measures on $\mathbb{H}$ satisfy $(LSq)$ inequalities (see Theorem \ref{thm223IP} below).  To describe these we first need to introduce the natural distance function on $\mathbb{H}$, which is the so-called \textit{Carnot-Carath\'eodory distance}.  This  distance is more natural than the usual Euclidean one, since it takes into account the extra structure that the Heisenberg group posseses.  
We define the Carnot-Carath\'eodory distance between two points in $\mathbb{H}$ by considering only \textit{admissible} curves between them in the following sense.  A Lipschitz curve $\gamma:[0,1]\to\mathbb{H}$ is said to be \textit{admissible} if $\gamma'(s) = a_1(s)X_1(\gamma(s)) + a_2(s)X_2(\gamma(s))$ almost everywhere with measurable coefficients $a_1, a_2$ i.e. if $\gamma'(s) \in sp\{X_1(\gamma(s)),X_2(\gamma(s))\}$ a.e.  Then the length of $\gamma$ is given by
$$l(\gamma) = \int_0^1\left(a_1^2(s) + a_2^2(s)\right)^{1/2}ds$$
and we define the Carnot-Carath\'eodory distance between two points $x,y\in\mathbb{H}$ to be 
$$d(x,y) := \inf\{l(\gamma):\gamma\ \textrm{is an admissible path joining}\ x\ \textrm{to}\ y\}.$$
Write $d(x)=d(x,e)$, where $e$ is the identity.

\begin{Remark}
This distance function is well defined as a result of Chow's theorem, which states that every two points in $\mathbb{H}$ can be joined by an admissible curve (see for example \cite{B-L-U},\cite{Grom}).
\end{Remark}

Geodesics are smooth, and are helices in $\mathbb{R}^3$.  
We also  have that $x=(x_1, x_2, x_3)\mapsto d(x)$ is smooth for $(x_1, x_2) \neq 0$, but not at points $(0,0, x_3)$, so that the unit ball has singularities on the $x_3$-axis.
In our analysis, we will frequently use the following two results.  The first is the well-known fact that the Carnot-Carath\'eodory distance satisfies the eikonal equation (see for example \cite{Mo}):
\begin{proposition}
\label{prop221IP}
Let $\nabla$ be the sub-gradient on $\mathbb{H}$.  Then $|\nabla d(x)|=1$ for all $x=(x_1, x_2, x_3)\in\mathbb{H}$ such that $(x_1, x_2)\neq0$.
\end{proposition}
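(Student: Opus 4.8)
The plan is to establish the two inequalities $|\nabla d(x)|\le 1$ and $|\nabla d(x)|\ge 1$ separately. The first expresses that $d$ is $1$-Lipschitz for the Carnot-Carath\'eodory metric, and the second will come from the existence of a length-minimizing horizontal geodesic from $e$ to $x$. Throughout I will use the already-stated fact that $d$ is smooth, hence differentiable, at every point with $(x_1,x_2)\neq 0$, so that $\nabla d(x)=(X_1 d(x),X_2 d(x))$ is well defined there.

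For the upper bound I would fix a unit vector $v=(a_1,a_2)\in\mathbb{R}^2$ and let $s\mapsto\gamma_v(s)$ be the integral curve of the horizontal field $a_1 X_1+a_2 X_2$ started at $x$. This curve is admissible with $l(\gamma_v|_{[0,s]})=s$, so the triangle inequality gives $|d(\gamma_v(s))-d(x)|\le s$. Dividing by $s$ and letting $s\to 0$, the chain rule yields $(d\circ\gamma_v)'(0)=a_1 X_1 d(x)+a_2 X_2 d(x)=\langle v,\nabla d(x)\rangle$, which therefore has modulus at most $1$; since $v$ is an arbitrary unit vector this forces $|\nabla d(x)|\le 1$.

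For the lower bound I would use the existence of a geodesic joining $e$ to $x$: an admissible curve $\gamma:[0,T]\to\mathbb{H}$, parametrized by arc length so that $\gamma'(t)=a_1(t)X_1+a_2(t)X_2$ with $a_1(t)^2+a_2(t)^2=1$, and with $l(\gamma)=T=d(x)$. A short triangle-inequality argument shows $d(\gamma(t))=t$ for all $t\in[0,T]$, so $t\mapsto d(\gamma(t))$ has derivative $1$; in particular its left derivative at $t=T$ is $1$. Because $d$ is differentiable at $x=\gamma(T)$, that left derivative equals $\langle(a_1(T),a_2(T)),\nabla d(x)\rangle$, and by Cauchy-Schwarz this is at most $|\nabla d(x)|$. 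Hence $|\nabla d(x)|\ge 1$, and together with the previous step $|\nabla d(x)|=1$.

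The main obstacle is the single non-elementary ingredient used above, namely the existence of a length-minimizing geodesic terminating at $x$: this is a sub-Riemannian Hopf-Rinow type statement, which on $\mathbb{H}$ can be obtained from the explicit description of geodesics as helices, or cited from [B-L-U], [Mo]. I should also note that no regularity of $d$ along the interior of the geodesic is needed --- differentiability is invoked only at the endpoint $x$, which is assumed --- so the singularities of $d$ on the $x_3$-axis cause no trouble here.
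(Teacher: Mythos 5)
Your argument is correct, but note that the paper does not actually prove Proposition \ref{prop221IP} at all: it records the eikonal equation as a well-known property of the Carnot--Carath\'eodory distance and simply cites [Mo] (with smoothness of $d$ off the $x_3$-axis taken as given). Your two-sided proof is the standard argument behind that citation and is sound: the bound $|\nabla d(x)|\le 1$ follows from the $1$-Lipschitz property of $d$ along horizontal integral curves together with differentiability of $d$ at $x$, and the bound $|\nabla d(x)|\ge 1$ follows from the existence of a unit-speed minimizing geodesic $\gamma$ from $e$ to $x$, the identity $d(\gamma(t))=t$ (a two-line triangle-inequality argument, as you say), and the chain rule at the endpoint combined with Cauchy--Schwarz. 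The one nontrivial input, existence of a minimizer, is legitimate in $\mathbb{H}$ (explicit helical geodesics, or a sub-Riemannian Hopf--Rinow statement, as in [B-L-U] or [Mo]), and you are right that differentiability of $d$ is needed only at $x$ itself, so the singular set $\{(0,0,x_3)\}$ causes no difficulty even if the geodesic meets it. In short: the paper buys the statement by reference, while your proposal supplies a self-contained proof whose only imported ingredient is geodesic existence; both are acceptable, and your version has the merit of making explicit exactly where the standing smoothness assumption on $d$ enters.
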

We must be careful in dealing with the notion of $\Delta d$, since it will have singularities on the $x_3$-axis.  However, the following proposition from \cite{I-P} provides some control of these singularities.
\begin{proposition}
\label{prop222IP}
Let $\Delta$ be the sub-Laplacian on $\mathbb{H}$.
There exists a constant $K_{0}$ such that $\Delta d \leq \frac{K_{0}}{d}$ in the sense of distributions.
\end{proposition}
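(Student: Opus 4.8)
The plan is to prove the estimate first as a classical pointwise inequality on $\mathbb{H}\setminus\{x_3\text{-axis}\}$, where $d$ is smooth, and then to upgrade it to an inequality of distributions on all of $\mathbb{H}$ by excising a thin cylinder around the $x_3$-axis and letting its radius shrink to $0$. The three facts that drive the argument are: the smoothness of $d$ away from the $x_3$-axis (which is precisely the cut locus of $e$); the eikonal equation $|\nabla d|=1$ of Proposition~\ref{prop221IP}; and the scaling of $d$ under the Heisenberg dilations $\delta_\lambda(x_1,x_2,x_3)=(\lambda x_1,\lambda x_2,\lambda^2 x_3)$. Since each $X_i$ is homogeneous of degree $1$ for these dilations, one has $\Delta(f\circ\delta_\lambda)=\lambda^2(\Delta f)\circ\delta_\lambda$, and together with $d\circ\delta_\lambda=\lambda d$ this shows that $\Delta d$ is homogeneous of degree $-1$; equivalently, the product $d\,\Delta d$ is $\delta_\lambda$-invariant. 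As every dilation orbit meets the unit sphere $\Sigma=\{d=1\}$ in exactly one point, it therefore suffices to bound $\Delta d$ from above on $\Sigma$ minus its two poles $N,S$, the points where $\Sigma$ meets the $x_3$-axis.

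\textbf{The estimate on $\Sigma$.} On every compact subset of $\Sigma\setminus\{N,S\}$ the function $\Delta d$ is smooth, hence bounded; what must be controlled is the behaviour of $\Delta d$ as the $x_3$-axis is approached from within $\mathbb{H}\setminus\{x_3\text{-axis}\}$. Here one uses the explicit description of the minimizing geodesics from $e$ recalled above: they are helices, and the ones reaching the $x_3$-axis close up after exactly one turn of the $(x_1,x_2)$-projection. From the resulting (semi-)explicit formula for $d$ near the center one finds, with $\rho=\sqrt{x_1^2+x_2^2}$, that $\partial_\rho d\to -1$ as $\rho\to 0$ — consistent with the eikonal equation and with the fact that the axis is the cut locus, so moving off it decreases the distance to $e$ — and hence that $\Delta d\sim -\mathrm{const}/\rho\to -\infty$ near the axis. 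This one-sided blow-up is harmless for the desired \emph{upper} bound; moreover, since $1/\rho$ is locally integrable in $\mathbb{R}^3$ near a line, it shows $\Delta d\in L^1_{\mathrm{loc}}(\mathbb{H})$ (near $e$ itself $|\Delta d|\lesssim 1/d$, which is locally integrable because the homogeneous dimension $Q=4$ exceeds $1$). Combining this with the compactness argument and the homogeneity reduction produces a constant $K_0$ with $\Delta d\le K_0/d$ pointwise on $\mathbb{H}\setminus\{x_3\text{-axis}\}$. (A variant avoiding the asymptotic expansion is a Calabi-type support-function argument at an axis point $x_0\ne e$: choosing $p$ on a minimizing geodesic from $e$ to $x_0$ with $d(e,p)=\tfrac{1}{2} d(x_0)$, the point $p^{-1}x_0$ lies off the axis, so $d(p,\cdot)$ is smooth near $x_0$, the function $h=d(e,p)+d(p,\cdot)$ is a smooth upper barrier for $d$ at $x_0$, and left-invariance of $\Delta$ gives $\Delta h(x_0)=(\Delta d)(p^{-1}x_0)\le K_0/d(p,x_0)=2K_0/d(x_0)$.)

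\textbf{Passage to distributions.} Fix $\varphi\in C_0^\infty(\mathbb{H})$ with $\varphi\ge 0$ and, for $\epsilon>0$, set $\Omega_\epsilon=\{x\in\mathbb{H}:x_1^2+x_2^2>\epsilon^2\}$. On $\overline{\Omega_\epsilon}$ the function $d$ is smooth, so Green's identity for $\Delta=X_1^2+X_2^2$ gives
\begin{equation*}
\int_{\Omega_\epsilon} d\,\Delta\varphi\,dx \;=\; \int_{\Omega_\epsilon}(\Delta d)\,\varphi\,dx \;+\; \sum_{i=1,2}\int_{\partial\Omega_\epsilon}\bigl(d\,X_i\varphi-\varphi\,X_i d\bigr)\langle X_i,n\rangle\,d\sigma ,
\end{equation*}
where $n$ is the Euclidean outward unit normal along the cylinder $\partial\Omega_\epsilon=\{x_1^2+x_2^2=\epsilon^2\}$ (the vector fields $X_1,X_2$ being divergence free in the Euclidean sense). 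On $\operatorname{supp}\varphi$ the quantities $d,\varphi,X_i\varphi$ are bounded and $|\nabla d|=1$, while the surface measure of $\partial\Omega_\epsilon\cap\operatorname{supp}\varphi$ is $O(\epsilon)$; hence the boundary integral is $O(\epsilon)$ and vanishes as $\epsilon\to0$. Since $\Delta d\in L^1_{\mathrm{loc}}(\mathbb{H})$, the volume integral on the right converges to $\int_{\mathbb{H}}(\Delta d)\,\varphi\,dx$, and the left side to $\int_{\mathbb{H}}d\,\Delta\varphi\,dx=\langle\Delta d,\varphi\rangle$. Therefore $\langle\Delta d,\varphi\rangle=\int_{\mathbb{H}}(\Delta d)\,\varphi\,dx\le K_0\int_{\mathbb{H}}\varphi/d\,dx$ for all such $\varphi$, which is exactly $\Delta d\le K_0/d$ in $\mathcal{D}'(\mathbb{H})$.

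\textbf{Main obstacle.} The hard part is the second step: extracting the sign of $\partial_\rho d$ and the order of the blow-up of $\Delta d$ along the $x_3$-axis — i.e.\ along the cut locus of $e$ — from the explicit geodesic/distance formulas on $\mathbb{H}$ (or, in the barrier variant, checking that a proper sub-geodesic of a minimizer reaching the axis leaves the axis, so that the distance from one of its interior points is smooth near the axis). Everything else — the dilation reduction to $\Sigma$, the compactness of $\Sigma\setminus\{N,S\}$, the local integrability of $\Delta d$, and the cut-off/integration-by-parts upgrade — is routine.
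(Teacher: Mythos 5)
The paper does not actually prove this proposition: it is quoted verbatim from [I-P] (and the underlying analysis goes back to [H-Z] and Monti's asymptotics [Mo]), so there is no in-paper proof to compare against. Your outline is essentially the standard argument used there, and it is sound: dilation homogeneity ($d\circ\delta_\lambda=\lambda d$, $\Delta(f\circ\delta_\lambda)=\lambda^2(\Delta f)\circ\delta_\lambda$) reduces the pointwise bound to the unit CC sphere; smoothness of $d$ off the $x_3$-axis gives boundedness on compact subsets of $\Sigma\setminus\{N,S\}$; the near-axis behaviour $\partial_\rho d\to-1$, $\Delta d\sim -c/\rho$ gives the (one-sided) control at the poles and the $L^1_{\mathrm{loc}}$ bound $|\Delta d|\lesssim 1/d+1/\rho$; and the cylinder-excision Green's identity with divergence-free $X_1,X_2$, eikonal bound $|X_id|\le1$, and surface area $O(\epsilon)$ kills the boundary terms, so the distributional $\Delta d$ is the a.e.-defined function and the pointwise bound upgrades to $\mathcal{D}'$. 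Two caveats, both of which you essentially flag yourself: the whole weight of the proof rests on the quantitative near-axis expansion of $d$ (first and second derivatives), which you assert from the explicit geodesic/distance formulas rather than derive — this is exactly the input [I-P] takes from [Mo], so citing it is legitimate but it is the non-routine part; and the parenthetical Calabi-type barrier variant is not yet a complete substitute, since a smooth upper support function at each axis point gives a viscosity-type upper bound there, and passing from that to the distributional inequality (equivalently, ruling out a positive singular part of $\Delta d$ on the axis) requires an extra argument, which in your main line is precisely what the $O(\epsilon)$ boundary-term estimate accomplishes.
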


The following result concerning the q Log-Sobolev inequality can be   found in \cite{H-Z}.
\begin{theorem} \textbf{(\cite{H-Z})}
\label{thm223IP}
Let $\mu_p$ be the probability measure on $\mathbb{H}$ given by 
\[
\mu_p (dx) = \frac{e^{-\beta d^p(x)}}{\int_\mathbb{H} e^{-\beta d^p(x)}dx}dx
\]
where $p\geq2$, $\beta >0$, $dx$ is the Lebesgue measure on $\mathbb{R}^3$ and $d(x)$ is the Carnot-Carath\'edory distance.  Then $\mu_p$ satisfies an $(LS_q)$ inequality, where $\frac{1}{p}+\frac{1}{q} = 1$.
\end{theorem}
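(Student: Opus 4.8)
I would follow the Hebisch--Zegarlinski scheme for coercive inequalities on metric measure spaces: first produce a \emph{U-bound} for $\mu_p$, then a \emph{local} $(LS_q)$ inequality on Carnot--Carath\'eodory balls, and finally glue the two together. Write $U:=\beta d^p$, so that $\mu_p=e^{-U}dx/Z$. By Proposition~\ref{prop221IP} the distance satisfies $|\nabla d|=1$ off the $x_3$-axis (a Lebesgue-null set), hence $|\nabla U|=\beta p\,d^{p-1}$ a.e.; since $\tfrac1p+\tfrac1q=1$ one has $q(p-1)=p$, so $|\nabla U|^q=(\beta p)^q d^{p}$. Thus the relation $\tfrac1p+\tfrac1q=1$ is exactly what makes the growth of $|\nabla U|^q$ match that of $U$, and the U-bound I should aim for is a control of $\int d^{p}|f|^q\,d\mu_p$ by the $q$-energy plus lower-order terms. (Note also that $q\in(1,2]$ forces $p\ge2$, which is the range in which the local inequality and the entropy manipulations below are available.)

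\textbf{Step 1 (U-bound).} I would prove that there are $A,B>0$ with
\[
\int d^{p}|f|^q\,d\mu_p\;\le\;A\int|\nabla f|^q\,d\mu_p\;+\;B\int|f|^q\,d\mu_p
\]
for all smooth $f$. Since $X_1=\partial_{x_1}-\tfrac12 x_2\partial_{x_3}$ and $X_2=\partial_{x_2}+\tfrac12 x_1\partial_{x_3}$ are divergence-free for the Lebesgue (Haar) measure, integration by parts gives $\int(\Delta d)g\,dx=-\int\nabla d\cdot\nabla g\,dx$. Taking $g=d\,|f|^q e^{-U}\ge0$, using $|\nabla d|=1$, the comparison $\Delta d\le K_0/d$ of Proposition~\ref{prop222IP} (so that $\int(\Delta d)g\,dx\le K_0\int|f|^q e^{-U}dx$), and $|\nabla|f|^q|\le q|f|^{q-1}|\nabla f|$, one is led to
\[
\beta p\int d^{p}|f|^q e^{-U}dx\;\le\;(K_0+1)\int|f|^q e^{-U}dx\;+\;q\int d\,|f|^{q-1}|\nabla f|\,e^{-U}dx .
\]
Young's inequality with exponents $p,q$, using $(q-1)p=q$ to rewrite $(d|f|^{q-1})^p$ as $d^{p}|f|^q$, bounds the last integral by $\varepsilon\int d^{p}|f|^q e^{-U}dx+C_\varepsilon\int|\nabla f|^q e^{-U}dx$; choosing $\varepsilon<\beta p$, absorbing, and dividing by $Z$ gives the U-bound.

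\textbf{Step 2 (local $(LS_q)$).} On $\mathbb{H}$ the H\"ormander condition for $X_1,X_2$ yields sub-elliptic estimates and hence a sub-Riemannian Sobolev inequality $\|g\|_{L^{q^{\ast}}(B_R)}\le C_R\big(\|\nabla g\|_{L^q(B_R)}+\|g\|_{L^q(B_R)}\big)$ on every Carnot--Carath\'eodory ball $B_R$, with $q^{\ast}>q$ the sub-elliptic Sobolev exponent. Combined with the Poincar\'e inequality on balls (also classical for H\"ormander systems), the standard entropy--Sobolev argument produces a (tight) $(LS_q)$ inequality for the normalized Lebesgue measure on $B_R$; since $e^{-U}$ is bounded above and below on $B_R$, a Holley--Stroock perturbation transfers it to $\mu_p$ restricted and renormalized to $B_R$, with some constant $c_R$.

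\textbf{Step 3 (globalization and tightening).} To pass from local to global I would take a smooth cutoff $\chi_R\equiv1$ on $B_R$ with $\mathrm{supp}\,\chi_R\subset B_{2R}$, apply Step~2 to $f\chi_R$, and estimate the contribution of $\{d>R\}$ to the entropy by dominating the logarithm there by a small multiple of $d^{p}$, thereby reducing the tail to $\int_{\{d>R\}}d^{p}|f|^q\,d\mu_p$ --- which is controlled, and made small for $R$ large, by the U-bound of Step~1. This yields first a \emph{defective} inequality $\mathrm{Ent}_{\mu_p}(|f|^q)\le c\int|\nabla f|^q\,d\mu_p+D\int|f|^q\,d\mu_p$. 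Finally, the U-bound together with the local Poincar\'e inequality gives a $q$-Poincar\'e inequality for $\mu_p$, and a Rothaus-type tightening argument (in the form valid for $q\in(1,2]$) removes the additive defect $D$, giving the asserted $(LS_q)$ inequality.

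The step I expect to be the main obstacle is Step~1: $d$ is only Lipschitz and genuinely singular on the $x_3$-axis, so the integration by parts and the distributional inequality $\Delta d\le K_0/d$ must be justified by regularizing $d$ (for instance replacing it by $\sqrt{d^2+\delta^2}$), simultaneously approximating $|f|^q$ by $C^2$ compactly supported functions, controlling the boundary terms at infinity by the strong decay of $e^{-\beta d^p}$, and then letting the regularization parameters vanish. A secondary technical point is to track the dependence of $c_R$ on $R$ in Step~2 so that the absorption in Step~3 can indeed be carried out.
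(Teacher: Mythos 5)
The paper itself gives no proof of this theorem: it is quoted from [H-Z], and the text only records that the proof there rests on first establishing the U-bound $\mu_p\left(|f|^q d^{p}\right)\leq C\mu_p|\nabla f|^q+D\mu_p|f|^q$. Your Step 1 is exactly that argument (multiply the Leibniz identity by $d\,\nabla d$, integrate by parts against the Haar measure, use $|\nabla d|=1$ from Proposition \ref{prop221IP} and $\Delta d\leq K_0/d$ from Proposition \ref{prop222IP}, then Young with $(q-1)p=q$); the same computation appears in this paper as Lemma \ref{lem621HG2}. Where you diverge from the cited proof is the passage from the U-bound to the (defective) $(LS_q)$: [H-Z] — and its reflection here, Proposition \ref{prop1.2p5} — do not localize to balls at all for this step. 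They set $g^q=f^qe^{-U}/Z$, bound $\int g^q\log g^q$ via Jensen by $\frac{q+\epsilon}{\epsilon}\log\bigl(\int g^{q+\epsilon}\bigr)^{q/(q+\epsilon)}$, and invoke the \emph{global} classical Sobolev inequality for the Lebesgue/Haar measure on $\mathbb{H}$; expanding $\nabla g$ produces the terms $|f|^q|\nabla U|^q$ and (from the density inside the logarithm) $|f|^qU$, which is precisely why the U-bound is taken on $|\nabla U|^q+U$ (for $U=\beta d^p$ both are comparable to $d^p$, matching your Step 1). Balls and the local Poincar\'e inequality enter only in the tightening, via a $q$-spectral gap. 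Your alternative middle route — local $(LS_q)$ on $B_R$ by Holley–Stroock plus a cutoff gluing — is a legitimate Lyapunov-type strategy, but it is the thinnest part of your sketch: the Holley–Stroock constant on $B_{2R}$ grows like $e^{c\beta R^p}$, and the tail estimate as you state it (``dominating the logarithm by a small multiple of $d^p$'') does not work pointwise, since $\log|f|^q$ is not controlled by $d^p$ for arbitrary $f$; one needs an entropy-splitting or variational-inequality argument there, and the interplay between the exploding local constant and the smallness gained from the U-bound must be checked. The global-Sobolev route of [H-Z] buys you exactly the avoidance of this $R$-bookkeeping, which is why it is the cleaner path; your skeleton (U-bound, defective inequality, Rothaus-type tightening with the spectral gap) is otherwise the same as the cited proof.
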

In order to prove the  Log-Sobolev inequality, the following inequality, called the U-bound, was first shown. 
$$\mu_p\left\vert f \right\vert^q d^{p}(x)\leq C\mu_p\ \left\vert \nabla f
\right\vert^q+D\mu_p\left\vert f \right\vert^q$$
for some constants $C,D>0$. 
More generally, for  arbitrary measures $$\mu (dx) = \frac{e^{-U(x)}}{\int e^{-U(x)}dx}dx$$  the authors in \cite{H-Z}  associated the q Log-Sobolev  Inequality with the following  U-bound inequality:
 \begin{align*}\mu\left\vert f \right\vert^q(\vert \nabla U\vert^q+U)\leq C\mu\ \left\vert \nabla f
\right\vert^q+D\mu\left\vert f \right\vert^q\end{align*} 
for constants $C,D>0$. Concerning the weaker Spectral Gap inequality the associated inequality is  \begin{align*}\mu\left\vert f \right\vert^q\eta\leq C'\mu\ \left\vert \nabla f
\right\vert^q+D'\mu\left\vert f \right\vert^q\end{align*} 
for some non negative non decreasing function $\eta$ and constants $C',D'>0$.

\subsection{Infinite dimensional setting.} In this section we present the infinite dimensional setting as well as past results for the infinite dimensional  Gibbs measure.
 
 ~

\textit{\textbf{Infinite dimensional analysis.}}             In the more standard Euclidean model the problem has been extensively discussed. Regarding the Log-Sobolev Inequality for the local specification
 $\{\mathbb{E}^{\Lambda,\omega}\}_{\Lambda\subset\subset \mathbb{Z}^d,\omega \in
\Omega}$ on a d-dimensional Lattice, criterions and examples of measures   $\mathbb{E}^{\Lambda,\omega}$ that satisfy
the Log-Sobolev -with a constant uniformly on the set $\Lambda$ and the boundary
conditions $\omega-$ are investigated in \cite{Z2}, \cite{B-E}, \cite{B-L}, \cite{Y} and \cite{B-H}.  For $\left\Vert \nabla_i \nabla_j V(x_i,x_j) \right\Vert_{\infty}<\infty$ the Log-Sobolev is proved  when the phase
$\phi$ is strictly convex and convex at infinity. Furthermore, in \cite{G-R} and \cite{B-Z}
the Spectral Gap Inequality is proved to be true for phases beyond the convexity at infinity.

For
the measure  $\mathbb{E}^{\{i\},\omega}$ on the real line, necessary
and sufficient  conditions are presented in  \cite{B-G}, \cite{B-Z} and \cite{R-Z}, so that
  the Log-Sobolev Inequality is satisfied uniformly on the boundary
conditions $\omega$.

The problem
of the Log-Sobolev inequality for the Infinite dimensional Gibbs measure on the
Lattice is examined
in \cite{G-Z}, \cite{Z1} and \cite{Z2}. The first two study the LS for measures
on a d-dimensional Lattice for bounded spin systems, while the third one
looks at continuous spins systems on the one dimensional Lattice.

In \cite{Ma} and \cite{O-R},  criterions are presented  in order to pass from the Log-Sobolev
Inequality for the single-site    measure $\mathbb{E}^{\{i\},\omega}$
to the (LS2) for the Gibbs measure $\nu_{N}$ on a finite N-dimensional product space. Furthermore, using these criterions  one can conclude  the Log-Sobolev Inequality for the family $\{\nu_N,N\in\mathbb{N}\}$ with a constant uniformly
on $N$. 

In \cite{L-Z} a similar situation is studied, in that the authors consider a system of H\"ormander generators in infinite dimensions and prove logarithmic Sobolev inequalities as well as some ergodicity results.  The main difference between the present set up and their situation is that we consider a non-compact underlying space, namely the Heisenberg group, in which the techniques of \cite{L-Z} cannot be applied.
  Concerning the same problem for the LSq ($q\in (1,2]$) inequality in the case of Heisenberg groups with
quadratic interactions in 
\cite{I-P}  a similar criterion is presented for the Gibbs  measure based on the methods
developed in \cite{Z1} and \cite{Z2}.

Our general setting is as follows:
 
 \textit{The Lattice.} When we  refer to the  Lattice we mean
 the 1-dimensional 
Lattice $\mathbb{Z}$. 
 
\textit{The Configuration space.}
We consider continuous unbounded random variables in $\mathbb{H}$, representing spins. Our configuration space is $\Omega=\mathbb{H}^{\mathbb{Z}}$. For any
$\omega\in \Omega $ and $\Lambda \subset \mathbb{Z}$ we denote
$$\omega=(\omega_i)_{i\in \mathbb{Z}}, \omega_{\Lambda}=(\omega_i)_{i\in\Lambda},\omega_{\Lambda^c}=(\omega_i)_{i\in\Lambda^c} \text{\; and \;}\omega=\omega_{\Lambda}\circ\omega_{\Lambda^c}$$
where $\omega_i \in \mathbb{H}$. When $\Lambda=\{i\}$ we will write $\omega_i=\omega_{\{i\}}$.
Furthermore, we will write $i\sim j$ when the nodes $i$ and $j$     are nearest neighbours, that means, they are connected with a vertex,
while
 we will denote the set of the neighbours of $k$ as   $\{\sim k\}=\{r:r \sim k\}$.
 
\textit{The functions of the configuration.}
We consider integrable functions $f$ that depend on a finite set of variables
$\{x_i\}, i\in{\Sigma_f}$ for a finite subset $\Sigma_f\subset\subset \mathbb{Z}$. The symbol $\subset\subset$ is used to denote a finite subset.

\textit{The Measure on $\mathbb{Z}$.}
For any subset $\Lambda\subset\subset
\mathbb{Z}$ we define the probability measure
$$\mathbb{E}^{\Lambda,\omega}(dx_\Lambda) =
\frac{e^{-H^{\Lambda,\omega}}dx_\Lambda} {Z^{\Lambda,\omega}}$$ 
  where
 \begin{itemize}
\item $x_{\Lambda}=(x_i)_{i\in\Lambda}$ and $dx_\Lambda=\prod_{i\in\Lambda}dx_i$

\item $Z^{\Lambda,\omega}=\int e^{-H^{\Lambda,\omega}}dx_\Lambda$
\item
$H^{\Lambda,\omega}=\sum_{i\in\Lambda}
\phi (x_i)+\sum_{i\in\Lambda,j\sim
i}J_{ij}V(x_i
,z_j)$ \item $\phi\geq 0$ and $V\geq 0$
 
\end{itemize}
\noindent
and
\begin{itemize}
\item
$z_j=x_{\Lambda}\circ\omega_{\Lambda^c}=\begin{cases}x_j & ,i\in\Lambda  \\
\omega_j & ,i\notin\Lambda \\   
\end{cases}$
\end{itemize}We call $\phi$ the phase and $V$ the potential of the interaction.
 For convenience we will frequently omit the boundary symbol from the measure and will
write $\mathbb{E}^{\Lambda}\equiv\mathbb{E}^{\Lambda,\omega}$.
 Furthermore we will assume that
 
 ~

   \noindent
\textbf{(H$^*$)} There exist   constants $B_*(L),B^*(L)\in (0,\infty)$ such that 
$$\int_{B_L\otimes B_L} e^{-H^{\{\sim i\},\omega}}dX_{\{\sim i \}}\geq\frac{1}{B^*(L)} \text{\; for \;} \omega_{j}\in B_L,j \in \{ i-2,i,i+2 \} $$and $$ e^{-H^{\{\sim i\},\omega}}\geq\frac{1}{B_*(L)}$$ $$\text{\;for \ }\omega_{j}\in B_L,j \in \{ i-2,i,i+2 \}  \text{\  and \ }x_j\in B_L,j\in\{i-1,i+1\} $$where for any $R\geq 0$, $B_R=\{x\in \mathbb{H}:d(x)\leq R\}$.

 ~

 \begin{Remark} The hypothesis $(H^*)$ is a technical condition which essentially does not allow  singularities on $\phi$ and $V$. 
 \end{Remark}  
\textit{The  Infinite Volume Gibbs Measure.}   The Gibbs measure $\nu$ for the local specification
$\{\mathbb{E}^{\Lambda,\omega}\}_{\Lambda\subset \mathbb{Z},\omega \in \Omega}$ is defined as the  probability measure which solves the Dobrushin-Lanford-Ruelle (DLR) equation
 $$\nu \mathbb{E}^{\Lambda,\star}=\nu $$
for  finite  sets $\Lambda\subset \mathbb{Z}$ (see \cite{Pr}). For conditions on
the existence and uniqueness of the Gibbs measure see e.g. \cite{B-HK}
and \cite{D}.  It should be noted that $\{\mathbb{E}^{\Lambda,\omega}\}_{\Lambda\subset\subset \mathbb{Z},\omega \in
\Omega}$ always satisfies the DLR equation, in the sense that
$$\mathbb{E}^{\Lambda,\omega}\mathbb{E}^{M,\ast}=\mathbb{E}^{\Lambda,\omega}$$ for every $M\subset\Lambda$. \cite{Pr}.

\textit{The gradient $\nabla$ for continuous spins systems.}
\noindent
For any subset $\Lambda\subset \mathbb{Z}$ we define the gradient
 $$\left\vert \nabla_{\Lambda} f
\right\vert^q=\sum_{i\in\Lambda}\left\vert \nabla_{i} f
\right\vert^q $$the \textit{sub-gradient} $\nabla_i$ corresponds to the i'th variable
$$\nabla_{i} := (X^{i}_1,X^{i}_2)$$ 
When $\Lambda=\mathbb{Z}$ we will simply write $\nabla=\nabla_\mathbb{Z}$. We denote
$$\mathbb{E}^{\Lambda,\omega}f=\int f d\mathbb{E}^{\Lambda,\omega}(x_\Lambda)$$
 Under this specific higher dimensional setting the  q Logarithmic Sobolev and q Spectral Gap inequalities  are defined for measures of the local specification $\{\mathbb{E}^{\Lambda,\omega} \}_{\Lambda\subset \subset \mathbb{Z},\omega\in\Omega}$.
 
\textit{The $q$ Log-Sobolev Inequality  (LSq) on $\mathbb{H}^\mathbb{Z}$.}
 We say that the measure $\mathbb{E}^{\Lambda,\omega}$ satisfies the q Log-Sobolev  Inequality for $q\in (1,2]$, if there exists a constant $C_{LS}$   such that for any function $f$, the following  holds
$$\mathbb{E}^{\Lambda,\omega}\left\vert f\right\vert^qlog\frac{\left\vert f\right\vert^q}{\mathbb{E}^{\Lambda,\omega}\left\vert f\right\vert^q}\leq C_{LS}
\mathbb{E}^{\Lambda,\omega}\left\vert \nabla_{\Lambda} f
\right\vert^q$$  
with the constant $ C_{LS}\in(0,\infty)$  uniformly on the set $\Lambda$  and the boundary conditions
$\omega$.
 
\textit{The $q$ Spectral Gap Inequality on $\mathbb{H}^\mathbb{Z}$.} We say that the measure $\mathbb{E}^{\Lambda,\omega}$ satisfies the q Spectral Gap  Inequality for $q\in (1,2]$, if there exists a constant $C_{SG}$   such that for any function $f$, the following  holds
$$\mathbb{E}^{\Lambda,\omega}\left\vert f-\mathbb{E}^{\Lambda,\omega}f \right\vert^q\leq C_{SG}
\mathbb{E}^{\Lambda,\omega}\left\vert \nabla_{\Lambda} f
\right\vert^q$$
with the constant $C_{SG}\in(0,\infty)$  uniformly on the set $\Lambda$  and the boundary conditions
$\omega$.
\begin{Remark}\label{rem1.1} We will frequently use the following two well known properties about the
 Log-Sobolev  and the Spectral Gap Inequality. If the probability measure $\mu$ satisfies
the Log-Sobolev Inequality with constant $c$ then it also satisfies the Spectral Gap Inequality with a constant   $\hat c= \frac{4 c}{\log 2}$.
More detailed, in the case where $q=2$ the optimal constant is less or equal to $\frac{c}{2}<\hat c$, while  in the case $1<q<2$ it is less or equal to $\frac{4c}{\log 2}$. The constant $\hat c$ does not depend  on the value of  the parameter $q\in(1,2]$.  

Furthermore, if for a family $I$
of sets  $\Lambda_i \subset \mathbb{Z}$,
$dist(\Lambda_i,\Lambda_j)>1 \ , i\neq j$    the measures $\mathbb{E}^{\Lambda_{i},\omega},
i\in I$
satisfy the Log-Sobolev Inequality with constants $c_i,i\in I$,  then the
probability measure $\mathbb{E}^{\{\cup_{i\in I}\Lambda_i\},\omega}$ also satisfies
the (LS) Inequality with constant $c=max_{i\in I} c_i$. The last result is also true for the Spectral Gap Inequality. The
 proofs of these two properties can be found in  \cite{Gros} and   \cite{G-Z} for $q=2$
 and in \cite{B-Z} for $1<q<2$. \end{Remark}
  Concerning the $q$ Log-Sobolev inequality for spins on the Heisenberg group, in \cite{I-P} the inequality was proven for a specific class of H\"ormander type  generators on the Heisenberg group. The main hypothesis for the local specification was that   $\Vert V''\Vert_\infty<\infty$ and that the one dimensional measures $\mathbb{E}^{i,\omega}$ satisfies
the Log-Sobolev-q Inequality with a constant $c$ uniformly with  respect to the boundary
conditions $\omega$.    Furthermore   the main hypothesis  that  $\mathbb{E}^{i,\omega}$ satisfies
the Log-Sobolev-q Inequality with a constant $c$ uniformly with respect to the boundary
conditions $\omega$ was proven to hold for a specific class of  local specification  
with  
\begin{equation}
\label{eq25PapIPn}
H^{\Lambda,\omega}(x_\Lambda) =\\\alpha \sum_{i\in \Lambda}d^p(x_i) + \varepsilon\sum_{\{i,j\}\cap\Lambda \not=\emptyset ,j:j\sim i}(d(x_i) + \rho d(\omega_j))^{2}  
\end{equation}
for $\alpha>0$, $\varepsilon, \rho \in\mathbb{R}$, and $p\geq2$ (with $\varepsilon>-\frac{\alpha}{2N}$ if $p=2$ for), where as above  $x_i = \omega_i$ for $i\not\in \Lambda$.
 The proof is based on showing the following U-bound
$$\mathbb{E}^{i,\omega}\left(|f|^q\left(d^{p-1}  + \sum_{j:j\sim i}d(\omega_j)\right)\right) \leq A_1\mathbb{E}^{i,\omega}|\nabla_if|^q + B_1\mathbb{E}^{i,\omega} |f|^q$$
for all smooth $f:\Omega\to\mathbb{R}$, and some constants $A_1, B_1\in(0,\infty)$ independent of $\omega$ and $i$.
 One of the purposes of the current paper is to present criterions which will allow to obtain the (LSq) inequality for the infinite dimensional Gibbs measure in the case where the measures  $\mathbb{E}^{i,\omega}$ satisfy a U-bound inequality as above with a constant $B_1$ which is not independent of the boundary conditions $\omega$.

If one starts with the weaker condition, that the one dimensional single site measures $\nu(dx_{i})=\frac{e^{-\phi(x_{i})}dx_i}{\int e^{-\phi(x_{i})}dx_i}$ satisfy an (LS2)  inequality 
 then as shown in \cite{Pa2} we can obtain a Talagrand type inequality for the Gibbs measure, similar to the ones obtained in \cite{B-R2}.

In [30], although the Log-Sobolev inequality was not eventually obtain  for the infinite dimensional Gibbs measure, it was showed that concentration properties still hold true. However, these are weaker than the ones that hold  for the product measure associated with the Log-Sobolev inequality, and similar to the ones that the Gibbs measure satisfies in the case of the Modified Log-Sobolev inequality $MLS(H_\Phi)$ with $\Phi(x)=x^4$ (see  \cite{B-R1},\cite{B-R2},\cite{G-G-M1},\cite{G-G-M2} and \cite{Pa3}).
In this paper we will see that under  appropriate c££££onditions  the actual (LSq) inequality can be obtained for the Gibbs measure. 

All the pre-mentioned developments refer to measures with interactions $V$
that satisfy 
$\left\Vert \nabla_i \nabla_j V(x_i,x_j) \right\Vert_{\infty}<\infty$. The question
that arises is whether similar assertions can be  verified for the infinite
dimensional Gibbs
measure in the case where  $\left\Vert \nabla_i \nabla_j V(x_i,x_j) \right\Vert_{\infty}= \infty$. In \cite{Pa1}, under the main hypothesis that  $\mathbb{E}^{i,\omega}$ satisfies
the Log-Sobolev-q Inequality with a constant $c$ uniformly with respect to the boundary
conditions $\omega$, such a result  was presented under the three   hypothesis:
 
~

\noindent
\textbf{(C1)}: The restriction  $\nu_{\Lambda(k)}$ of the Gibbs measure $\nu$
to the $\sigma-$algebra $\Sigma_{\Lambda(k)}$,    $$\Lambda(k)=\{k-2,k-1,k,k+1,k+2\}$$
\ \ \ \   \ \ \ \ \  \ satisfies the Log-Sobolev-q Inequality with a constant $C\in (0,\infty)$.
 
 ~

 \noindent
\textbf{(C2)}:
For
some $\epsilon>0$ and $\hat K>0$ such that  $$\nu_{\Lambda(i)} e^{2^{q+2}\epsilon 
V(x_r,x_s)}\leq \hat K\text{\;, \;} \nu_{\Lambda(i)} e^{2^{q+2}\epsilon \left\vert\nabla_{r}V(x_{r},x_{s})\right\vert^q}\leq\hat K \text{\; and \; }\nu_{\Lambda(i)}e^{\epsilon 2^{10} d(x_i)}<\hat K$$for $r,s\in\{i-2,i-1,i,i+1,i+2\}$.

~

\noindent
\textbf{(C3)}:
The coefficients $J_{i,j}$ are such that $\left\vert J_{i,j}\right\vert\in[0,J]$
for some $J<1$ sufficiently small.
  \begin{theorem}\label{thm2.1}\textbf{(\cite{Pa1})} Assume that  the one dimensional measures $\mathbb{E}^{i,\omega}$ satisfies
the Log-Sobolev-q Inequality with a constant $c$ uniformly with respect to the boundary
conditions $\omega$.
If hypothesis  (C1),   (C2) and (C3) are satisfied, then,  the infinite dimensional Gibbs measure $\nu$  for the local specification $\{\mathbb{E}^{\Lambda,\omega}\}_{\Lambda\subset\subset \mathbb{Z},\omega \in
\Omega}$ satisfies the $q$ Log-Sobolev  inequality
$$\nu \left\vert f\right\vert^q log\frac{\left\vert f\right\vert^q}{\nu \left\vert f\right\vert^q}\leq \mathfrak{C} \ \nu \left\vert \nabla f
\right\vert^q$$                              
for some positive constant $\mathfrak{C}$. \end{theorem}
 As an  example of a measure  $\mathbb{E}^{i,\omega}$ that satisfies
the Log-Sobolev-q Inequality with a constant $c$ uniformly with respect to the boundary
conditions $\omega$ with non quadratic interaction on the Heisenberg group one can think of a measure similar with that on (\ref{eq25PapIPn}) but with interactions of higher growth, i.e. 
\begin{equation*}
H^{\Lambda,\omega}(x_\Lambda) =\\\alpha \sum_{i\in \Lambda}d^p(x_i) + \varepsilon\sum_{\{i,j\}\cap\Lambda \not=\emptyset ,j:j\sim i}(d(x_i) + \rho d(\omega_j))^{s}  
\end{equation*}
for $\alpha>0$, $\varepsilon, \rho \in\mathbb{R}$, and $p>s>2$, where as above  $x_i = \omega_i$ for $i\not\in \Lambda$. The proof of this follows  with the use of uniform U-Bounds (see (\cite{Pa1})). One of the purposes of the current paper is to relax the main hypothesis that $\mathbb{E}^{i,\omega}$ satisfies
the Log-Sobolev-q Inequality with a constant $c$ uniformly on the boundary conditions. Furthermore, a simplification of the last 
theorem will be obtained in the next section in Theorem \ref{MarkovSimplif}. 
\section{Main results.}

We focus on the Logarithmic Sobolev Inequality (LS$q$) for measures related to systems with values in the Heisenberg group on the one dimensional Lattice with nearest neighbour interactions. The aim  is to investigate the conditions under which the  inequality  can be extended from the one dimensional measure to  the  Infinite  volume Gibbs measure. 

In this paper we apply the same ideas as in \cite{H-Z}, \cite{I-P} and \cite{Pa1} to investigate cases of measures were the U-bound inequalities do not hold uniformly on the boundary conditions but still the infinite volume Gibbs measure ultimately satisfies the Log-Sobolev inequality.  We will be concerned with two cases.

~

\noindent
\textbf{Case 1: A Perturbation property.}  
 The first case is  actually  a perturbation result on the measures obtained in \cite{H-Z}. We recall that  according to \cite{H-Z}, the measure on $\mathbb{H}$  given by 
\[
\mu_p (dx) = \frac{e^{-\beta d^p(x)}}{\int_\mathbb{H} e^{-\beta d^p(x)}dx}dx
\]
where $p\geq2$, $\beta >0$,  satisfies an $(LS_q)$ inequality, where $\frac{1}{p}+\frac{1}{q} = 1$. We try to address the following question. If we perturb this measure with interactions to obtain the following local specifications  $$\mathbb{E}^{\{i\},\omega}(dx_{i})=\frac{e^{-\beta d^p(x)-\sum_{j\sim
i}J_{ij}V(x_i
,\omega_j)}dX_i} {Z^{\{i\},\omega}} \text{\; with \;} \left\Vert \nabla_i \nabla_j V(x_i,x_j) \right\Vert_{\infty}\leq\infty$$          
 under which conditions does the        infinite volume
Gibbs measure  $\nu$ for the local specification $\{\mathbb{E}^{\Lambda,\omega}\}_{\Lambda\subset\subset \mathbb{Z},\omega \in
\Omega}$    satisfies the Log-Sobolev inequality?

In both \cite{I-P} and \cite{Pa1}, the main  assumption was that the one dimensional measures $\mathbb{E}^{i,\omega}$ satisfies
the Log-Sobolev-q Inequality with a constant $c$ uniformly with respect to the boundary
conditions $\omega$.
  In  this paper we want to relax  the main hypothesis  
for $\mathbb{E}^{\{i\},\omega}$ to the same assumption for the boundary free one dimensional measure.
In other words we want to address the following problem.

 Consider the local specification
\begin{equation}\label{eq218Papchap6++}\mathbb{E}^{\Lambda,\omega}(dx_\Lambda)=\frac{e^{-
\sum_{i \in \Lambda} \phi(x_i)-\sum_{i \in \Lambda}\sum_{j\sim
i}J_{ij}V(x_i,\omega_j)}dX_\Lambda} {Z^{ \Lambda,\omega}} \text{\; with \;} \left\Vert\partial_x \partial_y V(x,y) \right\Vert_{\infty} \leq \infty\end{equation}               and assume  that

       ~

\noindent
\textbf{(H1)}: The one site  measures $\mu(dx_{i})=\frac{e^{-\phi (x_{i})}dx_{i}}{\int e^{-\phi (x_{i})}dx_{i}}$ satisfies
the q Log-Sobolev  Inequality with a constant $c$.
 
 ~

 Under which conditions does the        infinite volume
Gibbs measure  $\nu$ corresponding to the local specification $\{\mathbb{E}^{\Lambda,\omega}\}_{\Lambda\subset\subset \mathbb{Z},\omega \in
\Omega}$   of (\ref{eq218Papchap6++}) satisfies the Log-Sobolev inequality? We present a strategy to solve this problem. As we will see, hypothesis (H1), together with (C1) imply the $q$ Log-Sobolev inequality for the infinite dimensional Gibbs measure. We will focus on  measures
on the  one dimensional lattice,  but our result can also be easily extended
on trees.

~

\noindent
\textbf{Case 2: Non uniform U-Bound.} As explained in the introduction, the U-bound inequalities introduced in \cite{H-Z} are an essential tool in proving the Spectral Gap and the Logarithmic Sobolev inequality, under the framework of the Heisenberg group. In the case of the specific example examined in \cite{I-P}, for the proof of both the Spectral Gap and the Log-Sobolev inequality the basic step was again the  U-bound inequalities. In order to obtain the two coercive inequalities uniformly on the boundary conditions, the two U-bounds had to be proven to hold also independently of the boundary conditions of the measure $\mathbb{E}^{i,\omega}$.

  Here we investigate cases were   weaker U-bound inequalities hold for  $\mathbb{E}^{i,\omega}$. 
In particular we concentrate on these cases were one of the constants depends on the boundary conditions $\omega$. For the  local specification 
\begin{equation}\label{eq218Papchap7++}\mathbb{E}^{\Lambda,\omega}(dx_\Lambda)=\frac{e^{-
H^{\Lambda,\omega}}dX_\Lambda} {Z^{ \Lambda,\omega}} \end{equation}
for $\Lambda\subset\subset\mathbb{Z}$ and $\omega\in \Omega$, with $$H^{\Lambda,\omega}=\sum_{i \in \Lambda} \phi(x_i)+\sum_{i \in \Lambda}\sum_{j\sim
i}J_{ij}V(x_i,\omega_j)\text{\; with \;} \left\Vert\partial_x \partial_y V(x,y) \right\Vert_{\infty}\leq \infty$$
we consider the  following hypothesis:

~

 \noindent
\textbf{(H2)}: \textit{Non uniform U-bound. }
 \begin{align*}\mathbb{E}^{\{\sim i\},\omega}\left\vert f \right\vert^q(\vert \nabla_{\{\sim i\}}H^{\{\sim i\},\omega}\vert^q+H^{\{\sim i\},\omega})\leq \hat C\mathbb{E}^{\{\sim i\},\omega}\ \left\vert \nabla_{\{\sim i\}} f
\right\vert^q+\hat D_{\{\sim i\}}(\omega)\mathbb{E}^{\{\sim i\},\omega}\left\vert f \right\vert^q\end{align*}for functions $f\in C^{\infty}$ for which the right-hand side is well defined, with $\nu e^{\epsilon\hat D_{\{\sim i\}}(\omega)}\leq \hat K$ where $\hat D_{\{\sim i\}}(\omega)$ is a function of $\omega_{i-2},\omega_i,\omega_{i+2}$.
What we will show is that even when the Log-Sobolev inequality  does not hold for the one site measure    $\mathbb{E}^{i,\omega}$ with a constant uniformly on the boundary, we can still obtain the inequality for the infinite dimensional Gibbs measure. 

Before we present the  main result a  remark concerning the conditions will follow.
\begin{Remark}\label{paradigmU-bounds}

 For examples of measures $$\mathbb{E}^{i,\omega}(dx_{i})=\frac{e^{-\phi(x_i)-\sum_{j\sim
i}J_{ij}V(x_i
,\omega_j)}dx_i} {\int e^{-\phi(x_i)-\sum_{j\sim
i}J_{ij}V(x_i
,\omega_j)}dx_i}$$ on the Heisenberg group, that satisfy  the non uniform U-bound   (H2) one can think of the following two

~

\noindent
 (i) $\phi(x)=d(x)^s$ for $0 \leq s <2 $ and 
 $V(x,y)=\left(d(x)-d(y)\right)^2$ in the case of (LS2)
 
~

and

~

\noindent (ii)  $\phi(x)=x^s$ for $0 \leq s <p $ and   $V(x,y)=\left(d(x)+d(y)\right)^p$,  where $\frac{1}{p}+\frac{1}{q}=1$ in the case of (LSq).

\end{Remark}
The main  theorem    follows.

\begin{theorem}\label{newmaintheorem} Assume that for  the local specification  $\{\mathbb{E}^{\Lambda,\omega}\}_{\Lambda\subset\subset \mathbb{Z},\omega \in 
\Omega}$   either hypothesis (H1) or (H2) is true and that  the interactions are such that: \begin{align}\label{newinteractionbound} \left \vert \nabla_i V \right \vert^q \leq a V+b
\end{align}
for positive constants $a,b$. Then for  coefficients $J_{i,j}$   sufficiently small, the infinite dimensional Gibbs measure   $\nu $  satisfies the Log-Sobolev q inequality
$$\nu \left\vert f\right\vert^q log\frac{\left\vert f\right\vert^q}{\nu \left\vert f\right\vert^q}\leq \mathfrak{C} \ \nu \left\vert \nabla f
\right\vert^q$$                              
for some positive constant $\mathfrak{C}\in (0,\infty$) if and only if (C1) is satisfied.
\end{theorem}

 The proof of Theorem \ref{newmaintheorem}  will be based on the  following weaker but more general result.\begin{theorem}\label{thm2.1Pap2} Assume that either hypothesis (H1) or (H2) is true. If the conditions  (C1), (C2) and (C3) for the local specification  $\{\mathbb{E}^{\Lambda,\omega}\}_{\Lambda\subset\subset \mathbb{Z},\omega \in 
\Omega}$ are satisfied, then the infinite dimensional Gibbs measure $\nu$  for the local specification $\{\mathbb{E}^{\Lambda,\omega}\}_{\Lambda\subset\subset \mathbb{Z},\omega \in
\Omega}$ satisfies the Log-Sobolev q inequality
$$\nu \left\vert f\right\vert^q log\frac{\left\vert f\right\vert^q}{\nu \left\vert f\right\vert^q}\leq \mathfrak{C} \ \nu \left\vert \nabla f
\right\vert^q$$                              
for some positive constant $\mathfrak{C}\in (0,\infty)$.     \end{theorem}
\begin{Remark} Theorem \ref{thm2.1Pap2} is more general that Theorem \ref{newmaintheorem}, since, if we consider the (LSq) under hypothesis (H1),  it also applies to local specifications that do not satisfy the bound (\ref{newinteractionbound}), e.g. $V(x_i,x_j)=\left \vert x_i-x_j\right \vert^s$ with $s>p$ where $\frac{1}{p}+\frac{1}{q}=1$, if the phase is $\phi(x)=\left \vert x \right \vert^r$ with $r>\max\{s,(s-1)q\}$. 
\end{Remark}
\noindent
\textit{\textbf{Proof of Theorem~\ref{newmaintheorem}.}} The proof of Theorem \ref{newmaintheorem} follows directly from  Theorem \ref{thm2.1Pap2} since hypothesis (C2) is always satisfied when (C1) and (\ref{newinteractionbound}) are true, as explained in the following theorem due to Hebisch and Zegarlinski (see \cite{H-Z}) 
\begin{theorem}\label{boundThm(H-Z)}\textbf{(\cite{H-Z})} Assume that a measure $\mu$ satisfies (LSq) for some $q\in(1,2]$.  Suppose that for some  constants $a,b\in(0,+\infty)$ we have \begin{align*}\label{newinteractionbound} \left \vert \nabla  f \right \vert^q \leq af+b
\end{align*}
Then the following exp-bound is true
\begin{align*} \mu e^{tf}<\infty
\end{align*} for all $t>0$ sufficiently small.
\end{theorem} 
\qed

As a matter of fact the same argumentation can be used to simplify
Theorem \ref{thm2.1},  in which case we obtain the following result 
\begin{theorem}\label{MarkovSimplif} Assume that  the one dimensional measures $\mathbb{E}^{i,\omega}$ satisfy
the Log-Sobolev-q Inequality with a constant $c$ uniformly with respect to the boundary
conditions $\omega$
and that  the interactions are such that: \begin{align*} \left \vert \nabla_i V \right \vert^q \leq a V+b
\end{align*}
for positive constants $a,b$. Then for  coefficients $J_{i,j}$   sufficiently small, the infinite dimensional Gibbs measure   $\nu $  satisfies the Log-Sobolev q inequality
$$\nu \left\vert f\right\vert^q log\frac{\left\vert f\right\vert^q}{\nu \left\vert f\right\vert^q}\leq \mathfrak{C} \ \nu \left\vert \nabla f
\right\vert^q$$                              
for some positive constant $\mathfrak{C}\in (0,\infty$) if and only if (C1) is satisfied.
\end{theorem}
 The rest of the paper will be dedicated in proving  Theorem~\ref{thm2.1Pap2}.  Aside from hypothesis (H1) and (H2)  the rest of the assumptions are 
 the same as in \cite{Pa1}. 

\begin{Remark}\label{rem2.1nPap1} 
  From Hypothesis $(C2)$ and H\"older inequality
 it follows that   $$\nu e^{\epsilon(\left\vert F(r)\right\vert+\mathbb{E}^{S(r),\omega}\left\vert F(r)\right\vert)^q}\leq \hat
K,\text{\; for \;}r=i-2,i-1,i,i+1,i+2$$where the functions $F(r)$ are defined by
$$ F(r)=\begin{cases}\nabla_{r}V(x_{i-1},x_{i})+\nabla_{r}V(x_{i+1},x_{i})& \text{\; for \;}r=i-1,i,i+1  \\
\nabla_{r}V(x_s,x_r)\mathcal{I}_{\{s\sim r:s\in\{i-3,i+3\}\}} & \text{\; for \;}r=i-2,i+2  \\
\end{cases}
$$
 and the sets $S(r)$  by 
 $$S(r)=\begin{cases}\{\sim i\} & \text{\; for \;}r=i-1,i,i+1 \\
\{i+3,i+4,...\} & \text{\; for \;}r=i+2 \text{\;and\;}s=i+3\\
\{...,i-4,i-3\} & \text{\; for \;}r=i-2 \text{\;and\;}s=i-3 \\
\end{cases}$$ These bounds will be frequently  used through out this and the next chapter.
\end{Remark} For computational reasons we set $\hat K:=e^K$ and  $$\eta(i,\omega)=d( x_{i-1} )+d( x_{i+1} )+\sum_{j\sim\{\ i-1,i+1\}}d(\omega_{j} )$$

  In order to prove Theorem~\ref{thm2.1Pap2}   we will use the methods developed by Zegarlinski in \cite{Z1} and \cite{Z2}. The main idea is based
on approximating the infinite dimensional  Gibbs measure $\nu$ for the local specification $\{\mathbb{E}^{\Lambda,\omega}\}_{\Lambda\subset\subset \mathbb{Z},\omega \in
\Omega}$ by a sequence   which involves components in the local specification that satisfy
the Log-Sobolev inequality. This method was used in \cite{Pa1}  and \cite{I-P} where the one dimensional
measures $\mathbb{E}^{\{i\},\omega}$ satisfied the Log-Sobolev inequality
uniformly on the boundary conditions $\omega$.   In the two cases examined here
where either  the one dimensional boundary-free measure $\mu(dx_{i})=\frac{e^{-\phi (x_{i})}dx_{i}}{\int e^{-\phi (x_{i})}dx_{i}}$ satisfies an (LSq) or  $\mathbb{E}^{\{i\},\omega}$ a non-uniform U-bound, we will
 replace under our assumptions (H1) and (H2) together with (C1)-(C3), the    property  $\mathbb{E}^{\{i\},\omega}\in LSq$ uniformly on $\omega$, by a similar but weaker inequality that maintains
most of the properties of the Log-Sobolev inequality. This Log-Sobolev type inequality will be\begin{equation}\label{2.1Pap2}\nu\mathbb{E}^{ \{\sim i\},\omega}(\left\vert f\right\vert^qlog\frac{\left\vert f\right\vert^ q}{\mathbb{E}^{ \{\sim i\},\omega}\left\vert f\right\vert^q}) \leq R\sum_{r=i-2}^{i+2}\nu\left\vert \nabla_{r}
f
\right\vert^q+R\sum_{ r=3 }^{\infty} J_0^{r-2}\nu\left\vert \nabla_{i\pm r} f
\right\vert^q\end{equation} We will prove a similar inequality to replace  the Spectral Gap inequality. This will be \begin{equation}\label{2.2Pap2}\nu\mathbb{E}^{ \{\sim i\},\omega}\left\vert f-\mathbb{E}^{ \{\sim i\},\omega}f\right\vert^q\leq  M\sum_{r=i-2}^{i+2}\nu\left\vert \nabla_{r}
f
\right\vert^q+M\sum_{ r=3 }^{\infty} J_0^{r-2}\nu\left\vert \nabla_{i\pm r} f
\right\vert^q\end{equation} where $J_0<1$ is a constant depending on $J$. The q Log-Sobolev type inequality (\ref{2.1Pap2}) will be   shown in Proposition~\ref{prp3.4Pap2} and Proposition \ref{lem1.7P5},    under the hypothesis (H1) and (H2) respectively. The q Spectral Gap type inequality (\ref{2.2Pap2}) will be   proven in Proposition~\ref{prp3.2Pap2}     for both the cases of  hypothesis (H1) and (H2). In addition, 
 an analogue   of the product property for the Log-Sobolev inequality is proven in   Proposition ~\ref{prp2.2Pap2} for the inequality~\eqref{2.1Pap2}. The proof of
 Theorem ~\ref{thm2.1Pap2} follows.

 ~

\noindent
\textit{\textbf{Proof of Theorem~\ref{thm2.1Pap2}.}}  We want to extend the Log-Sobolev Inequality from the one site  measure  to the infinite dimensional Gibbs measure for the local specification $\{\mathbb{E}^{\Lambda,\omega}\}_{\Lambda\subset\subset \mathbb{Z},\omega \in
\Omega}$  on   the entire one dimensional lattice.      Define the following sets
$$\Gamma_0=   \text{\;even integers, \;} \Gamma_1=\mathbb{Z}\smallsetminus\Gamma_0$$
One can notice that $\{dist(i,j)>1, \ \forall i,j \in\Gamma_k,k=0,1\}$, $\Gamma_0\cap\Gamma_1=\emptyset$  and    $\mathbb{Z}=\Gamma_0\cup\Gamma_1$. For convenience we will write $\mathbb{E}^{\Gamma_i}=\mathbb{E}^{\Gamma_i,\omega}$ for $i=0,1$. Denote
\begin{equation}\label{defPch6} \mathcal{P}=\mathbb{E}^{\Gamma_1}\mathbb{E}^{\Gamma_{0}}\end{equation}
   In order to prove the Log-Sobolev Inequality for the measure $\nu$, we will express the entropy with respect to the measure $\nu$ as the sum of the entropies of the measures   $\mathbb{E}^{\Gamma_0}$  and $\mathbb{E}^{\Gamma_1}$.
 Assume $f\geq 0$. We can write  \begin{align}\nonumber \nu (f^q log\frac{f^q}{\nu f^q})=&\nu\mathbb{E}^{\Gamma_0} (f^q log\frac{f^q}{\mathbb{E}^{\Gamma_0} f^q})+\nu\mathbb{E}^{\Gamma_{1}} (\mathbb{E}^{\Gamma_0}f^q log\frac{\mathbb{E}^{\Gamma_0}f^q}{\mathbb{E}^{\Gamma_{1}}\mathbb{E}^{\Gamma_0} f^q})+\\ & \label{2.3Pap2}
\nu (\mathbb{E}^{\Gamma_{1}}\mathbb{E}^{\Gamma_{0}}f^q log\mathbb{E}^{\Gamma_{1}}\mathbb{E}^{\Gamma_{0}}f^q)-\nu
 (f^q log \nu f^{q})\end{align}
  The following proposition gives a Log-Sobolev type inequality for the product
  measures  $\mathbb{E}^{ \Gamma_{k},\omega},k=0,1$. \begin{proposition} \label{prp2.2Pap2}
   Assume  either  (H1) or (H2). If the conditions  (C1), (C2) and (C3) are satisfied then  the following
Log-Sobolev type inequality holds  
 $$\nu\mathbb{E}^{ \Gamma_{k}}(\left\vert f\right\vert^qlog\frac{\left\vert f\right\vert^q}{\mathbb{E}^{ \Gamma_{k}}\left\vert f\right\vert^q}) \leq\tilde C \nu\left\vert \nabla_{\Gamma_0}
f
\right\vert^q+\tilde C\nu\left\vert \nabla_{\Gamma_1} f
\right\vert^q$$for $k=0,1$, and  some positive constant $\tilde C$.
\end{proposition}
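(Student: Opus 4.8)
The plan is to reduce the product measure $\mathbb{E}^{\Gamma_k}$ (for $k=0,1$) to the single-site measures $\mathbb{E}^{\{i\},\omega}$ that compose it, using the factorization property of entropy for product measures. Since $\Gamma_k$ is a set of sites with pairwise distance $>1$, the measure $\mathbb{E}^{\Gamma_k,\omega}$ factorizes as a product $\bigotimes_{i\in\Gamma_k}\mathbb{E}^{\{i\},\omega}$ once the boundary $\omega_{\Gamma_k^c}$ is fixed. Hence, by the sub-additivity of entropy under products, $\mathbb{E}^{\Gamma_k}\!\left(|f|^q\log\frac{|f|^q}{\mathbb{E}^{\Gamma_k}|f|^q}\right)\le\sum_{i\in\Gamma_k}\mathbb{E}^{\Gamma_k}\!\left(\mathbb{E}^{\{i\},\omega}\!\left(|f|^q\log\frac{|f|^q}{\mathbb{E}^{\{i\},\omega}|f|^q}\right)\right)$. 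Integrating against $\nu$ and using the DLR equation $\nu\mathbb{E}^{\Lambda,\star}=\nu$, I would bound each term by $\nu\mathbb{E}^{\{\sim i\},\omega}\!\left(|f|^q\log\frac{|f|^q}{\mathbb{E}^{\{\sim i\},\omega}|f|^q}\right)$ after inserting the conditional expectation over the neighbours $\{\sim i\}$ (this is harmless because conditioning can only decrease entropy, or more precisely one uses that $\mathbb{E}^{\{i\},\omega}$ is a marginal of $\mathbb{E}^{\{\sim i\},\omega}$ together with the entropy inequality).

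The key input is then the Log-Sobolev type inequality \eqref{2.1Pap2}, which — under hypotheses $\textbf{A}$ or $\textbf{B}$ respectively — will be established in Proposition \ref{prp3.4Pap2} and Proposition \ref{lem1.7P5}. Applying \eqref{2.1Pap2} to each $i\in\Gamma_k$ gives
$$\nu\mathbb{E}^{\{\sim i\},\omega}\!\left(|f|^q\log\frac{|f|^q}{\mathbb{E}^{\{\sim i\},\omega}|f|^q}\right)\le R\sum_{r=i-2}^{i+2}\nu|\nabla_r f|^q+R\sum_{r=3}^{\infty}J_0^{r-2}\nu|\nabla_{i\pm r}f|^q.$$
Summing over $i\in\Gamma_k$, each gradient term $\nu|\nabla_j f|^q$ is counted only a bounded number of times among the "local" contributions (at most a fixed constant, since the blocks $\{i-2,\dots,i+2\}$ for $i\in\Gamma_k$ overlap with controlled multiplicity), while the "tail" contributions $\sum_i\sum_{r\ge3}J_0^{r-2}\nu|\nabla_{i\pm r}f|^q$ reorganize, after interchanging the order of summation, into $\big(\sum_{r\ge3}J_0^{r-2}\big)\cdot(\text{bounded multiplicity})\cdot\sum_j\nu|\nabla_j f|^q$, which converges precisely because $J_0<1$ for $J$ small enough. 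Collecting both pieces yields $\nu\mathbb{E}^{\Gamma_k}\!\left(|f|^q\log\frac{|f|^q}{\mathbb{E}^{\Gamma_k}|f|^q}\right)\le\tilde C\,\nu|\nabla_{\Gamma_0}f|^q+\tilde C\,\nu|\nabla_{\Gamma_1}f|^q$ with $\tilde C$ depending on $R$, $J_0$ and the lattice geometry but not on $f$, $\Lambda$ or $\omega$.

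The main obstacle I anticipate is not this reorganization step, which is essentially bookkeeping, but making rigorous the passage from the single-site entropy to the neighbour-block entropy and justifying the factorization of $\mathbb{E}^{\Gamma_k}$ together with the use of the DLR relation — i.e. ensuring that all the conditional expectations and the infinite-volume measure $\nu$ interact correctly, and that the function $f$ (depending only on finitely many coordinates) keeps every quantity well defined. The geometric book-keeping requires care in tracking exactly which sites $i\pm r$ fall in which of $\Gamma_0,\Gamma_1$, but the decay factor $J_0^{r-2}$ absorbs any constant-order multiplicity. Once \eqref{2.1Pap2} is available, the proposition follows by these standard manipulations with the sub-additivity of entropy.
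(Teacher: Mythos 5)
Your overall strategy is genuinely different from the paper's and is viable in outline: since the sites of $\Gamma_k$ are pairwise non-adjacent, $\mathbb{E}^{\Gamma_k,\omega}$ is a product measure, and sub-additivity of entropy lets you split the left-hand side into blocks to which \eqref{2.1Pap2} applies directly to $f$; the paper instead writes the \emph{exact} telescoping decomposition \eqref{3.18Pap2} over the pair blocks $a_m=\{\sim 0\},\{\sim 4\},\{\sim(-4)\},\dots$, which produces entropies of the conditioned functions $(\mathbb{E}^{a_{k-1}}\cdots\mathbb{E}^{a_0}f^q)^{1/q}$ and therefore forces the use of the sweeping-out Lemma \ref{lem3.5Pap2} to convert gradients of conditioned functions back into gradients of $f$. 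Your route, if carried out correctly, avoids that lemma entirely and only needs Proposition \ref{prp3.4Pap2} or \ref{lem1.7P5} plus the geometric bookkeeping, which is a real simplification.

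However, the middle step as you wrote it is wrong, and it is exactly the step you flagged as delicate. You pass from the single-site entropy $\nu\bigl(\mathbb{E}^{\{i\},\omega}$-entropy of $|f|^q\bigr)$ to $\nu\mathbb{E}^{\{\sim i\},\omega}\bigl(|f|^q\log\frac{|f|^q}{\mathbb{E}^{\{\sim i\},\omega}|f|^q}\bigr)$, justifying this by ``conditioning can only decrease entropy'' and by the claim that $\mathbb{E}^{\{i\},\omega}$ is a marginal of $\mathbb{E}^{\{\sim i\},\omega}$. But in this paper $\{\sim i\}=\{i-1,i+1\}$ does \emph{not} contain $i$: $\mathbb{E}^{\{\sim i\},\omega}$ integrates the variables $x_{i-1},x_{i+1}$ while $\mathbb{E}^{\{i\},\omega}$ integrates $x_i$, so neither is a marginal or a conditional of the other, and no monotonicity of entropy relates them. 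Concretely, if $f$ depends on $x_i$ alone, the right-hand side of your claimed domination vanishes while the left-hand side can be arbitrarily large. The repair is a re-indexing, not a new idea: for $i\in\Gamma_1$ use a block that actually contains $i$, namely $\{\sim(i+1)\}$ or $\{\sim(i-1)\}$ (even centre), for which $\mathbb{E}^{\{\sim j\},\omega}=\mathbb{E}^{\{j-1\},\omega}\otimes\mathbb{E}^{\{j+1\},\omega}$ and the decomposition $\mathrm{Ent}_{\mathbb{E}^{\{\sim j\},\omega}}(g)=\mathbb{E}^{\{\sim j\},\omega}\bigl[\mathrm{Ent}_{\mathbb{E}^{\{i\},\omega}}(g)\bigr]+\mathrm{Ent}_{\mathbb{E}^{\{\sim j\},\omega}}\bigl(\mathbb{E}^{\{i\},\omega}g\bigr)$ gives the needed domination after integrating with $\nu$ and using the DLR identity for the finite set $\{\sim j\}$; or, cleaner still, skip the single-site stage and tensorize $\mathbb{E}^{\Gamma_k}$ directly over the partition of $\Gamma_k$ into the pairs $\{\sim j\}$ with $j$ in the opposite sublattice (the same pairs the paper uses), so that each term is already of the form covered by \eqref{2.1Pap2}. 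With that correction, and keeping in mind that you (like the paper) are invoking the DLR consistency $\nu\mathbb{E}^{\Gamma_k}=\nu$ for the infinite sets $\Gamma_k$, your summation argument with bounded multiplicity and the geometric factor $J_0^{r-2}$ does yield the stated bound.
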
 The proof of Proposition~\ref{prp2.2Pap2} will be the subject of Section \ref{section6}.
If we use the Proposition~\ref{prp2.2Pap2} for $\mathbb{E}^{\Gamma_i},i=0,1$,  we get
 \begin{align}\nonumber ~\eqref{2.3Pap2}\leq &\tilde C \nu\left\vert \nabla_{\Gamma_0} f
\right\vert^q+ \tilde C \nu\left\vert \nabla_{\Gamma_1} f
\right\vert^q+\tilde C\nu\left\vert \nabla_{\Gamma_1}(\mathbb{E}^{\Gamma_0} f^q)^{\frac{1}{q}}
\right\vert^q\\ &\label{2.4Pap2}+\nu (\mathbb{E}^{\Gamma_{1}}\mathbb{E}^{\Gamma_{0}}f^q log\mathbb{E}^{\Gamma_{1}}\mathbb{E}^{\Gamma_{0}}f^q)-\nu (f^q log \nu f^q)\end{align}
For the fourth term on the right hand side  of~\eqref{2.4Pap2} we can write
\begin{align}\nonumber\nu (\mathcal{P}f^q log \mathcal{P} f^q)=&\nu \mathbb{E}^{\Gamma_{0}}(\mathcal{P}f^q log \frac{\mathcal{P} f^q}{\mathbb{E}^{\Gamma_{0}}\mathcal{P} f^q})+\nu \mathbb{E}^{\Gamma_{1}}(\mathbb{E}^{\Gamma_{0}}\mathcal{P}f^q log \frac{\mathbb{E}^{\Gamma_{0}}\mathcal{P} f^q}{\mathbb{E}^{\Gamma_{1}}\mathbb{E}^{\Gamma_{0}}\mathcal{P} f^q})\\  & \nonumber
+\nu( \mathbb{E}^{\Gamma_{1}}\mathbb{E}^{\Gamma_{0}}\mathcal{P}f^q log \mathbb{E}^{\Gamma_{1}}\mathbb{E}^{\Gamma_{0}}\mathcal{P}f^q )\end{align}
If we use again Proposition~\ref{prp2.2Pap2}  for the measures $\mathbb{E}^{\Gamma_{i}},i=0,1$
we get
 \begin{align}\nonumber\nu (\mathcal{P}f^qqlog \mathcal{P} f^q)\leq & \tilde C\nu\left\vert \nabla_{\Gamma_0}(\mathcal{P}f^q)^\frac{1}{q}
\right\vert^q\\   &\label{2.5Pap2}+\tilde C \nu\left\vert \nabla_{\Gamma_1}(\mathbb{E}^{\Gamma_{0}} \mathcal{P}f^q)^\frac{1}{q}
\right\vert^q+\nu (\mathcal{P}^2f^q log \mathcal{P}^2f^q) \end{align}
 If we work similarly for the last term $\nu (\mathcal{P}^2f^q \log \mathcal{P}^2f^q)$  of ~\eqref{2.5Pap2} and  inductively for any term  $\nu (\mathcal{P}^kf^q log \mathcal{P}^kf^q)$, then after $n$ steps~\eqref{2.4Pap2} and~\eqref{2.5Pap2} will give
\begin{align}\nonumber\nu (f^q log\frac{f^q}{\nu f^q})\leq &\nu (\mathcal{P}^n f^q log\mathcal{P}^n f^q)-\nu (f^qlog \nu f^q)+ \tilde C \nu\left\vert \nabla_{\Gamma_1} f
\right\vert^q+\tilde C \nu\left\vert \nabla_{\Gamma_0} f
\right\vert^q\\  &\label{2.6Pap2}
+\tilde C \sum_{k=1}^{n-1} \nu \left\vert \nabla_{\Gamma_0}( \mathcal{P}^kf^q)^\frac{1}{q}
\right\vert^q+\tilde C \sum_{k=0}^{n-1} \nu\left\vert \nabla_{\Gamma_1}(\mathbb{E}^{\Gamma_{0}} \mathcal{P}^kf^q)^\frac{1}{q}
\right\vert^q \end{align}
  In order to calculate the third and fourth term on the right-hand side of~\eqref{2.6Pap2} we will use the following proposition
\begin{proposition}\label{prp2.3Pap2}Assume  either  (H1) or (H2). If the conditions  (C1)-(C3) are satisfied, then the following bound holds
\begin{equation} \label{2.7Pap2}\nu\left\vert \nabla_{\Gamma_i}(\mathbb{E}^{\Gamma_{j}} \left\vert f\right\vert^q)^\frac{1}{q}
\right\vert^q\leq C_1\nu\left\vert \nabla_{\Gamma_i}f\right\vert^q+C_2\nu\left\vert \nabla_{\Gamma_j}f\right\vert^q\end{equation}
for $\{i,j\}=\{0,1\}$ and  constants $C_1\in (0,\infty)$ and $0<C_2<1$.\end{proposition}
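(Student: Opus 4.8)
The plan is to estimate the gradient of $(\mathbb{E}^{\Gamma_j}|f|^q)^{1/q}$ component by component along the lattice, exploiting that the measures $\mathbb{E}^{\Gamma_j}$ are product measures over the disjoint sub-lattices $\Gamma_0$ or $\Gamma_1$. Fix $\{i,j\}=\{0,1\}$ and write $g^q = \mathbb{E}^{\Gamma_j}|f|^q$. For a site $k\in\Gamma_i$, the variable $x_k$ does not belong to $\Gamma_j$, so $\nabla_k$ commutes with $\mathbb{E}^{\Gamma_j}$ only up to the boundary terms coming from the interaction $J_{kl}V(x_k,x_l)$ with $l\sim k$, $l\in\Gamma_j$. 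Differentiating $g$ one gets $\nabla_k g = \tfrac{1}{q} g^{1-q}\nabla_k\mathbb{E}^{\Gamma_j}|f|^q$, and by the usual computation for Gibbsian conditional expectations $\nabla_k \mathbb{E}^{\Gamma_j}|f|^q = \mathbb{E}^{\Gamma_j}\big(\nabla_k |f|^q\big) - \mathbb{E}^{\Gamma_j}\big(|f|^q\, \nabla_k H^{\Gamma_j,\cdot}\big) + \mathbb{E}^{\Gamma_j}|f|^q\,\mathbb{E}^{\Gamma_j}\big(\nabla_k H^{\Gamma_j,\cdot}\big)$, where $\nabla_k H^{\Gamma_j,\cdot} = \sum_{l\sim k,\,l\in\Gamma_j} J_{kl}\nabla_k V(x_k,x_l)$. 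The first piece, after applying Jensen/Hölder in the form $g^{1-q}\mathbb{E}^{\Gamma_j}(\nabla_k|f|^q) \le g^{1-q}\mathbb{E}^{\Gamma_j}(q|f|^{q-1}|\nabla_k f|) \le q\,\mathbb{E}^{\Gamma_j}(|\nabla_k f|^q)^{1/q}$ and then raising to the $q$-th power and integrating against $\nu$, contributes a term bounded by a constant times $\nu|\nabla_k f|^q$; summing over $k\in\Gamma_i$ gives the $C_1\nu|\nabla_{\Gamma_i}f|^q$ term.

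Next I would handle the two remaining pieces, which are covariance-type terms of the form $\mathrm{Cov}_{\mathbb{E}^{\Gamma_j}}(|f|^q, \nabla_k H)$. Here I would invoke the $q$-spectral gap / covariance estimates available for $\mathbb{E}^{\Gamma_j}$ — via the product property (Remark \ref{rem1.1}) applied to the single-site spectral gap type inequality \eqref{2.2Pap2}, or directly the U-bound structure — to bound the covariance by $\big(\text{something}\big)^{1/q}\big(\mathbb{E}^{\Gamma_j}|\nabla_{\Gamma_j}f|^q\big)^{1/q}$ times moments of $\nabla_k V$. The exponential integrability hypotheses (H2), or (H0$''$) for Case 2, together with (H4) and Remark \ref{rem2.1nPap1}, are exactly what control these moments of $\nabla V$ and the $\hat D$-type remainders after integrating against $\nu$; the key point is that each covariance term carries an explicit small factor $J_{kl}$, and there are boundedly many neighbours $l$, so after summing over $k\in\Gamma_i$ one gets a contribution of the form $(\text{const})\cdot J^{\theta}\big(\nu|\nabla_{\Gamma_i}f|^q + \nu|\nabla_{\Gamma_j}f|^q\big)$ for some $\theta>0$. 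Choosing $J$ small forces the coefficient of $\nu|\nabla_{\Gamma_j}f|^q$ below $1$, which is precisely the requirement $0<C_2<1$; the coefficient of $\nu|\nabla_{\Gamma_i}f|^q$ is absorbed into $C_1$.

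The main obstacle is controlling the covariance terms $\mathrm{Cov}_{\mathbb{E}^{\Gamma_j}}(|f|^q,\nabla_k H)$ with the correct dependence: one needs a bound whose $f$-dependence is genuinely $\nabla_{\Gamma_j}f$ (not $f$ itself) so that the scheme closes, and whose $\omega$-dependence is integrable against the infinite-volume measure $\nu$ after using (H2) or (H0$''$). This requires care because $\nabla_k H$ involves only finitely many neighbouring sites but the relevant conditional expectation $\mathbb{E}^{\Gamma_j}$ is over an infinite product; one localizes using the finite range of the interaction, applies a Cauchy–Schwarz/Hölder splitting to separate the $\nabla V$ factor from the Dirichlet-form factor, and then the Gibbsian relation $\nu\mathbb{E}^{\Lambda,\star}=\nu$ plus the exponential moment bounds let one pass to $\nu$. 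The small constant $J$ is what ultimately makes $C_2<1$ and is the quantitative heart of the argument; everything else is the standard product-measure/Gibbs bookkeeping already used in [Z1], [Z2], [I-P] and [Pa].
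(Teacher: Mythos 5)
Your overall architecture (site-by-site decomposition over $\Gamma_i$, Leibniz rule, a ``diagonal'' term giving $C_1\nu|\nabla_{\Gamma_i}f|^q$, and a covariance term carrying the small factor $J$ which, after summation, yields $C_2<1$) is the same as the paper's, which reduces \eqref{2.7Pap2} to $\sum_{i\in\Gamma_1}\nu|\nabla_i(\mathbb{E}^{\{\sim i\}}f^q)^{1/q}|^q$ and then applies the sweeping-out Lemma~\ref{lem3.5Pap2}. The genuine gap is in your treatment of the covariance $\mathbb{E}^{\Gamma_j}(|f|^q;\nabla_k H)$: you propose to bound it by ``the $q$-spectral gap \dots available for $\mathbb{E}^{\Gamma_j}$ --- via the product property (Remark~\ref{rem1.1}) applied to \eqref{2.2Pap2}, or directly the U-bound structure''. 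Under hypotheses $\textbf{A}$ or $\textbf{B}$ no such spectral gap for $\mathbb{E}^{\Gamma_j,\omega}$ is available: the whole point of relaxing (H0) is that the conditional measures $\mathbb{E}^{\{\sim i\},\omega}$ are \emph{not} known to satisfy a Poincar\'e or Log-Sobolev inequality uniformly in $\omega$ (under (H0$'$) there is no U-bound at all, and under (H0$''$) the constant $\hat D_{\{\sim i\}}(\omega)$ depends on the boundary). Moreover \eqref{2.2Pap2} cannot be tensorized through Remark~\ref{rem1.1}: it is not an inequality for $\mathbb{E}^{\{\sim i\},\omega}$ with a uniform constant and Dirichlet form $\mathbb{E}^{\{\sim i\},\omega}|\nabla_{\{\sim i\}}f|^q$, but an inequality already averaged over $\nu$ whose right-hand side involves gradients at infinitely many sites with geometric weights; the product property of Remark~\ref{rem1.1} applies only to genuine (uniform-in-$\omega$) SG/LS inequalities for the factor measures. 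Consequently the step in which you convert the covariance into $(\mathbb{E}^{\Gamma_j}|\nabla_{\Gamma_j}f|^q)^{1/q}$ times moments of $\nabla V$ has no justification under the stated hypotheses, and it is precisely the step the paper must work hardest to replace.

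What the paper actually does there is different in mechanism: the covariance is first bounded pathwise by Lemma~\ref{lem4.1Pap1} in terms of $\mathbb{E}^{\{\sim i\}}\bigl(|f-\mathbb{E}^{\{\sim i\}}f|^q(|v|^q+\mathbb{E}^{\{\sim i\}}|v|^q)\bigr)$, then, after localizing in $\Lambda(i)$, the Deuschel--Stroock entropic inequality \eqref{3.4Pap2} is applied with respect to the projected Gibbs measure $\nu_{\Lambda(i)}$, using hypothesis (H1) (the LSq for $\nu_{\Lambda(i)}$) to produce the quantity $Q(i,i)$ and hypothesis (H2) to control the exponential moment of $|v|^q$; finally $Q(i,i)$ and $\nu|f-\mathbb{E}^{\{\sim i\}}f|^q$ are bounded by Corollary~\ref{crl3.3Pap2} and the $\nu$-averaged Spectral Gap type inequality of Proposition~\ref{prp3.2Pap2} (Lemmas~\ref{lem4.3Pap1}, \ref{lem4.2Pap1}, \ref{lem3.1Pap1} and \ref{lem3.5Pap2}). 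Note also that the resulting bound does not have the form you anticipate (``genuinely $\nabla_{\Gamma_j}f$''): the covariance contributes gradients on \emph{both} sublattices, at the sites of $\Lambda(i)$ and in geometrically decaying tails $J_0^{r-2}\nu|\nabla_{i\pm r}f|^q$; the scheme still closes because all of these carry the factor $J^q$, the $\Gamma_i$ contributions being absorbed into $C_1$ and the $\Gamma_j$ ones made smaller than $1$ by choosing $J$ small. So your quantitative conclusion is right, but the key analytic ingredient you invoke to control the covariance is not available in this setting and must be replaced by the (H1)+(H2) entropic-inequality route through $\nu_{\Lambda(i)}$.
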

The proof of Proposition~\ref{prp2.3Pap2} will be the subject of Section \ref{section6}.
 If we apply inductively the bound~\eqref{2.7Pap2}
  k times to  the third and the fourth term of~\eqref{2.6Pap2} we obtain
 \begin{equation}  \label{2.8Pap2}\nu\left\vert \nabla_{\Gamma_0}(\mathcal{P}^kf^q)^\frac{1}{q}
\right\vert^q \leq C_2^{2k-1}C_1\nu\left\vert \nabla_{\Gamma_1} f
\right\vert^q+C_2^{2k}\nu\left\vert \nabla_{\Gamma_0} f
\right\vert^q\end{equation}
     and
\begin{equation}\label{2.9Pap2}\nu\left\vert \nabla_{\Gamma_1}(\mathbb{E}^{\Gamma_{0}}\mathcal{P}^kf^q)^\frac{1}{q}
\right\vert^q \leq C_2^{2k}C_1\nu\left\vert \nabla_{\Gamma_1} f
\right\vert^q+C_2^{2k+1}\nu\left\vert \nabla_{\Gamma_0} f
\right\vert^q\end{equation}
  If we plug~\eqref{2.8Pap2} and~\eqref{2.9Pap2} in~\eqref{2.6Pap2}, we get 
\begin{align}\nonumber\nu (f^qlog\frac{f^q}{\nu f^q})\leq&\nu (\mathcal{P}^n f^q log \mathcal{P}^n f^q)-\nu (f^qlog \nu f^q)+ \tilde C \nu\left\vert \nabla_{\Gamma_1} f
\right\vert^q\\ \nonumber
 &+\tilde C(\sum_{k=0}^{n-1}C_2^{2k-1})C_1\nu\left\vert \nabla_{\Gamma_1} f
\right\vert^q+\tilde C(\sum_{k=0}^{n-1}C_2^{2k})\nu\left\vert \nabla_{\Gamma_0} f
\right\vert^q\\ &
 \label{2.10Pap2}+\tilde C(\sum_{k=0}^{n-1}C_2^{2k})C_1\nu\left\vert \nabla_{\Gamma_1} f
\right\vert^q+\tilde C(\sum_{k=0}^{n-1}C_2^{2k+1})\nu\left\vert \nabla_{\Gamma_0} f
\right\vert^q\end{align}
 If we take the limit of $n$  to infinity in~\eqref{2.10Pap2} the first two term on
 the right hand side cancel with each other,  as explained in the proposition
 bellow.
 \begin{proposition} \label{prp2.4Pap2} Assume  either  (H1) or (H2). If the conditions  (C1)-(C3) are satisfied, then $\mathcal{P}^nf$ converges
 $\nu$-almost everywhere to $\nu f$, where $\mathcal{P}$ as in (\ref{defPch6}).
 \end{proposition}
 The proof of this proposition will be presented in Section \ref{section5}.
So, taking the limit of $n$  to infinity in~\eqref{2.10Pap2} leads to  $$\nu (f^qlog\frac{f^q}{\nu f^q})\leq \left(\tilde C+cA\left(\frac{C_1}{C_2}+C_2+C_1\right)\right)\nu\left\vert \nabla_{\Gamma_1} f
\right\vert^q+\tilde CA\nu\left\vert \nabla_{\Gamma_0} f
\right\vert^q$$
 where $A=lim_{n\rightarrow\infty}\sum_{k=0}^{n-1}C_2^{2k}<\infty$  for  $C_2<1$, and the theorem follows for a constant $$\mathfrak{C}= max\{ \left(\tilde C+\tilde CA\left(\frac{C_1}{C_2}+C_2+C_1\right)\right),cA\}$$
\qed
\section{q Poincar\'e type Inequality.}
     In this section we present the proof of the q Spectral Gap type inequality (\ref{2.2Pap2}). In the case of quadratic interactions $V(x,y)=(x-y)^2$ one can calculate $$\mathbb{E}^{i,\omega}\left( f^2(\nabla_j V(x_i-x_j)-\mathbb{E}^{i,\omega}\nabla_j V(x_i-x_j))^2 \right)$$  (see \cite{B-H} and \cite{H}) with the use of the Deuschel-Stroock relative entropy inequality (see \cite{D-S}) and the Herbst argument (see \cite{L} and \cite{H}). Herbst's argument states that if a probability measure $\mu$ satisfies the (LS2) inequality and a function $F$ is Lipschitz continues with $\Vert F \Vert_{Lips}\leq 1$ and such that $\mu (F)=0$, then for some small $\epsilon$ we have  $$\mu e^{\epsilon F^2}<\infty$$
For $\mu=\mathbb{E}^{i,\omega}$ and $F=\frac{\nabla_j V(x_i-x_j)-\mathbb{E}^{i,\omega}\nabla_j V(x_i-x_j)}{2}$ we then obtain  $$\mathbb{E}^{i,\omega} e^{\frac{\epsilon}{4} (\nabla_j V(x_i-x_j)-\mathbb{E}^{i,\omega}\nabla_j V(x_i-x_j))^2}<\infty$$uniformly on the boundary conditions $\omega$, because of the hypothesis that  $\mathbb{E}^{i,\omega}$ satisfies
the Log-Sobolev-q Inequality with a constant $c$ uniformly with respect to the boundary
conditions $\omega$.
 In the more general case however examined in this work, where interactions may be  non quadratic and the LSq inequality   does not hold for $\mathbb{E}^{i,\omega} $ uniformly on $\omega$, the Herbst argument cannot be applied. In this and next sections, following \cite{Pa1}, we show how one can bound exponential quantities like the last one with the use of the projection of the infinite dimensional Gibbs measure and hypothesis (C1) and (C2). 

For every probability measure $\mu$, we define the correlation function $$\mu(f;g)\equiv\mu(fg)-\mu(f)\mu(g)$$For the function  $h_k:=f-\mathbb{E}^{\{ \sim k\}}f$ we define$$Q(u,k)\equiv\nu_{\Lambda(u)}\left\vert \nabla_{\Lambda(u)}\left( \mathbb{E}^{M(u)}
\vert h_k\vert ^q \right)^{\frac{1}{q}}\right\vert^q$$
  where the set $\Lambda(k)=\{k-2,k-1,k,k+1,k+2\}$ and  $M(k)=\mathbb{Z}\smallsetminus\Lambda(k)$.
 This quantity will be frequently used in the remaining section to bound the variance and the entropy. The following proposition presents a useful bound for $Q(k,k)$  under the hypothesis (C1), (C2) and (C3). The proof of this proposition can be found in \cite{Pa1}.    \begin{proposition}\label{[Pa]Q(k,k)} Suppose that  hypothesis (C1), (C2) and (C3)  are satisfied. Then \begin{align*} Q(k,k)\leq J^{q}S\nu \left \vert f- \mathbb{E}^{k-1}\mathbb{E}^{k+1}f \right \vert^q+S\sum_{r=k-2}^{k+2}\nu\left\vert \nabla_{r}
f
\right\vert^q+S\sum_{ r=3 }^{\infty} J_0^{r-2}\nu\left\vert \nabla_{k\pm r} f
\right\vert^q\end{align*}for some positive constant  $S$ and $J_0=J^\frac{q-1}{4}$.
\end{proposition}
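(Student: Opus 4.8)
The plan is to differentiate $\big(\mathbb{E}^{M(k)}|h_k|^q\big)^{1/q}$ directly in each coordinate $x_j$, $j\in\Lambda(k)=\{k-2,k-1,k,k+1,k+2\}$, split the result into a ``main'' part governed by $\nabla_jf$ and ``covariance'' parts governed by the interaction, and estimate the latter by exponential integrability. For a conditional Gibbs expectation one has the Leibniz identity $\nabla_j\mathbb{E}^{M}g=\mathbb{E}^{M}\nabla_jg-\mathbb{E}^{M}\big(g;\nabla_jH^{M}\big)$, where $\mathbb{E}^{M}(g;\psi)=\mathbb{E}^{M}(g\psi)-\mathbb{E}^{M}g\,\mathbb{E}^{M}\psi$, hence by the chain rule
$$\nabla_j\big(\mathbb{E}^{M(k)}|h_k|^q\big)^{\frac1q}=\frac1q\big(\mathbb{E}^{M(k)}|h_k|^q\big)^{\frac{1-q}{q}}\Big[q\,\mathbb{E}^{M(k)}\big(|h_k|^{q-2}h_k\,\nabla_jh_k\big)-\mathbb{E}^{M(k)}\big(|h_k|^q;\nabla_jH^{M(k)}\big)\Big].$$
To the first term I would apply Hölder's inequality, $\mathbb{E}^{M(k)}\big(|h_k|^{q-1}|\nabla_jh_k|\big)\le\big(\mathbb{E}^{M(k)}|h_k|^q\big)^{\frac{q-1}{q}}\big(\mathbb{E}^{M(k)}|\nabla_jh_k|^q\big)^{\frac1q}$, which cancels the singular prefactor and leaves $\mathbb{E}^{M(k)}|\nabla_jh_k|^q$. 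Now $\nabla_jh_k=\nabla_jf-\nabla_j\mathbb{E}^{\{\sim k\}}f$ equals $\nabla_jf$ for $j=k\pm1$, while for $j\in\{k-2,k,k+2\}$ the term $\nabla_j\mathbb{E}^{\{\sim k\}}f$, expanded by the same Leibniz rule for $\mathbb{E}^{\{\sim k\}}=\mathbb{E}^{k-1}\mathbb{E}^{k+1}$, splits into $\mathbb{E}^{\{\sim k\}}\nabla_jf$ — which after $\mathbb{E}^{M(k)}$ and $\nu_{\Lambda(k)}$ is absorbed into $\sum_{r=k-2}^{k+2}\nu|\nabla_rf|^q$ — and a covariance term carrying an explicit coefficient $J$ through $\nabla_jH^{\{\sim k\}}$.

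The crux is to control covariance terms $\mathbb{E}^{M}\big(|h_k|^q;\psi\big)$ with $\psi$ a Hamiltonian derivative times a coefficient $J$, since $\|\partial_x\partial_yV\|_\infty$ is not assumed finite. For this I would invoke the Deuschel--Stroock relative entropy (variational) inequality [D-S],
$$\mathbb{E}^{M}\big(|h_k|^q;\psi\big)\le\frac1t\,\mathbb{E}^{M}\Big(|h_k|^q\log\tfrac{|h_k|^q}{\mathbb{E}^{M}|h_k|^q}\Big)+\frac1t\,\mathbb{E}^{M}|h_k|^q\,\log\mathbb{E}^{M}e^{t\psi},$$
where $t$ is chosen of order $J^{-1}$, so that hypothesis (H2) — via the exponential bounds collected in Remark~\ref{rem2.1nPap1} — keeps $\log\mathbb{E}^{M}e^{t\psi}$ finite and uniform in $\omega$ after passing from $\mathbb{E}^{M}$ to the measures $\nu_{\Lambda(\cdot)}$ and then to $\nu$ by the tower property and Jensen's inequality. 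A computation of exponents then shows that, when the bound is raised to the power $q$ and combined with the prefactor $\big(\mathbb{E}^{M}|h_k|^q\big)^{1-q}$, the exponential part scales like $J^q\,\mathbb{E}^{M}|h_k|^q$, which after $\nu$ produces exactly the term $J^qS\,\nu\big|f-\mathbb{E}^{k-1}\mathbb{E}^{k+1}f\big|^q$ since $h_k=f-\mathbb{E}^{k-1}\mathbb{E}^{k+1}f$.

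Finally I would sum over $j\in\Lambda(k)$, take $\nu_{\Lambda(k)}$ and then $\nu$ (using $\nu=\nu_{\Lambda(k)}\mathbb{E}^{M(k)}$ from the DLR equation), and deal with the entropy terms $\mathbb{E}^{M}\big(|h_k|^q\log\tfrac{|h_k|^q}{\mathbb{E}^{M}|h_k|^q}\big)$ produced above; this last step is what generates the long-range tail, because $\nabla_jH^{M(k)}$ for $j=k\pm2$ involves the variable one bond further out, so iterating the covariance/entropy estimate along the chain of nearest-neighbour bonds emanating from $k$ propagates the dependence to $x_{k\pm r}$ while each step costs a further fractional power of $J$ (coming from the repeated Young and Hölder splittings), yielding the geometric weights $J_0^{r-2}$ with $J_0=J^{(q-1)/4}$, hence $S\sum_{r\ge3}J_0^{r-2}\nu|\nabla_{k\pm r}f|^q$. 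Taking $J$ small makes all the geometric series converge, and consolidating every constant into one $S$ gives the statement. The main obstacle is precisely this bookkeeping: estimating the covariance and entropy terms uniformly in $\omega$ \emph{without} a uniform logarithmic Sobolev inequality for $\mathbb{E}^{M(k)}$ or $\mathbb{E}^{\{\sim k\}}$, which is what forces the projection onto $\nu$ and the careful tracking of the powers of $J$; the full argument is carried out in [Pa], from which this proposition is quoted.
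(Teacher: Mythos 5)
The paper itself does not prove this proposition: it is quoted from [Pa] ("The proof of this proposition can be found in [Pa]"), so there is no in-paper argument to compare with line by line; I can only assess your proposed mechanism against the framework the paper actually works in. Your opening steps are sound and natural: the Leibniz identity $\nabla_j\mathbb{E}^{M}g=\mathbb{E}^{M}\nabla_jg-\mathbb{E}^{M}(g;\nabla_jH^{M})$, the H\"older step cancelling the prefactor $\left(\mathbb{E}^{M(k)}|h_k|^q\right)^{(1-q)/q}$, and the observation that the covariance terms carry an explicit factor $J$ through $\nabla_jH^{\{\sim k\}}$ and $\nabla_{k\pm2}H^{M(k)}$. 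The genuine gap is in how you control the covariance and entropy terms relative to $\mathbb{E}^{M(k),\omega}$. You apply Deuschel--Stroock with respect to $\mathbb{E}^{M}$ and assert that (H2), via Remark \ref{rem2.1nPap1}, keeps $\log\mathbb{E}^{M}e^{t\psi}$ finite and uniform in $\omega$, with $t\sim J^{-1}$. Neither claim is available: (H2), (H4) and Remark \ref{rem2.1nPap1} are exponential bounds for the localized Gibbs marginals $\nu_{\Lambda(i)}$ (or for $\nu$), not for the conditional measures $\mathbb{E}^{M,\omega}$ uniformly in boundary conditions --- the unavailability of such uniform bounds (no (H0), no Herbst argument) is exactly the obstacle this circle of results is built to avoid, as stressed at the start of Section 3. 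Nor does "passing from $\mathbb{E}^{M}$ to $\nu_{\Lambda(\cdot)}$ by the tower property and Jensen" repair this: the offending term is the product $\mathbb{E}^{M}|h_k|^q\cdot\log\mathbb{E}^{M}e^{t\psi}$ of two functions of the outer variables, so Jensen cannot be applied to the logarithmic factor alone. The scheme used everywhere else in the paper (Proposition \ref{prp3.2Pap2}, Lemmas \ref{lem4.1Pap1}--\ref{lem3.1Pap1}) is different: one localizes the coupling function in $\Lambda(k)$, writes $\nu=\nu_{\Lambda(k)}\mathbb{E}^{M(k)}$, applies the entropic inequality \eqref{3.4Pap2} with respect to $\nu_{\Lambda(k)}$, and controls the resulting entropy by the (H1) log-Sobolev inequality for $\nu_{\Lambda(k)}$ --- a hypothesis of the proposition that your outline never invokes; note that this route reproduces a $Q(k,k)$-type term with a small prefactor (of order $J^q$), which must then be reabsorbed using (H3), a point your sketch does not address. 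The exponent bookkeeping is also off: the factor $J^q$ in front of $\nu\left|f-\mathbb{E}^{k-1}\mathbb{E}^{k+1}f\right|^q$ comes from pulling the coefficients $J_{ij}$ out of $\nabla_jH^{\{\sim k\}}$ before any entropic estimate, not from a choice $t\sim J^{-1}$; since (H2) fixes the admissible exponent $2^{q+2}\epsilon$, you are not free to take $t$ large.

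The second unresolved point is the long-range tail. The step you describe as "iterating the covariance/entropy estimate along the chain, each step costing a further fractional power of $J$" is the decisive part of the proof and is only asserted: bounding covariances (or entropies) of $|h_k|^q$ under the half-infinite-volume measures against functions of $x_{k\pm3}$ requires a quantitative sweeping-out/decay-of-correlations argument, and it is that iteration, carried out in [Pa], which produces the weights $J_0^{r-2}=J^{(q-1)(r-2)/4}$ on $\nu\left|\nabla_{k\pm r}f\right|^q$; no log-Sobolev or spectral gap inequality for $\mathbb{E}^{M(k),\omega}$ with an $\omega$-uniform constant may be presupposed there. In summary, the proposal identifies the right ingredients, but where it commits to a concrete mechanism (uniform exponential moments under $\mathbb{E}^{M}$, $t\sim J^{-1}$, an unproved chain iteration) the mechanism either fails under hypotheses (H1)--(H3) or defers the essential estimates to the reference, so as a proof it is incomplete.
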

The next lemma shows the Poincar\'e inequality for the two site  measure $\mathbb{E}^{\{\sim i\}},i\in \mathbb{Z}$ on the ball.
\begin{lemma}\label{PoincareBall} For any $L>0$ the following Poincar\'e inequality holds$$\mathbb{E}^{\{\sim i\}}\otimes\mathbb{\tilde E}^{\{\sim i\}}_{i}\vert f-\tilde f\vert^q\mathcal{I}_{\{\eta(i,\omega)+\tilde\eta(i,\omega)\leq L\}}\leqslant D_{L}\mathbb{E}^{\{\sim i\}}\vert \nabla_{\{\sim i \}} f\vert^q$$
where $\eta(i,\omega)=d( x_{i-1} )+d(x_{i+1} )+\sum_{j\sim\{\ i-1,i+1\}}d( \omega_{j})$ and $\mathcal{I}_A$ is the indicator function of set $A$.
\end{lemma}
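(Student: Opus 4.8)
The plan is to reduce the assertion, by means of Hypothesis $(H^*)$, to the classical $L^q$ Poincaré inequality on a Carnot--Carath\'eodory ball of $\mathbb{H}$ (Jerison's Poincaré inequality for H\"ormander vector fields), tensorised to the product $B_L\times B_L\subset\mathbb{H}^2$. Since the neighbours of $i$ on $\mathbb{Z}$ are exactly $i-1$ and $i+1$, the measure $\mathbb{E}^{\{\sim i\},\omega}$ is a probability measure on $(x_{i-1},x_{i+1})\in\mathbb{H}^2$, its Hamiltonian $H^{\{\sim i\},\omega}$ depends on the frozen spins only through $\omega_j$, $j\in\{i-2,i,i+2\}$, and $\phi\geq0$, $V\geq0$ give $H^{\{\sim i\},\omega}\geq0$, hence $e^{-H^{\{\sim i\},\omega}}\leq1$. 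Freezing all coordinates of the configuration outside $\{i-1,i+1\}$, I regard $f$ as a function $g=g(x_{i-1},x_{i+1})$ on $\mathbb{H}^2$; then $\tilde f=g(\tilde x_{i-1},\tilde x_{i+1})$ and, by the definition of $\nabla_\Lambda$, one has $|\nabla_{\{\sim i\}}f|^q=|\nabla_{i-1}g|^q+|\nabla_{i+1}g|^q=:|\nabla g|^q$, the sub-gradient on the Carnot group $\mathbb{H}\times\mathbb{H}$. All the constants produced below will depend only on $L$ and $q$, not on $\omega$, $i$ or the frozen coordinates.

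First I would dispose of the trivial case: if $\omega_j\notin B_L$ for some $j\sim\{i-1,i+1\}$, then $\eta(i,\omega)\geq d(\omega_j)>L$, so $\{\eta+\tilde\eta\leq L\}=\emptyset$ and the left-hand side vanishes. Hence assume $\omega_j\in B_L$ for all $j\in\{i-2,i,i+2\}$; then $(H^*)$ applies and yields $Z^{\{\sim i\},\omega}\geq\int_{B_L\times B_L}e^{-H^{\{\sim i\},\omega}}\,dX\geq 1/B^*(L)$ together with $1\leq B_*(L)\,e^{-H^{\{\sim i\},\omega}}$ on $B_L\times B_L$. On the event $\{\eta+\tilde\eta\leq L\}$ all four spins $x_{i-1},x_{i+1},\tilde x_{i-1},\tilde x_{i+1}$ lie in $B_L$, so using $e^{-H}\leq1$,
\[
\mathbb{E}^{\{\sim i\}}\otimes\tilde{\mathbb{E}}^{\{\sim i\}}|f-\tilde f|^q\,\mathcal{I}_{\{\eta+\tilde\eta\leq L\}}\ \leq\ \frac{1}{\left(Z^{\{\sim i\},\omega}\right)^2}\int_{B_L\times B_L}\int_{B_L\times B_L}|g(x)-g(\tilde x)|^q\,dx\,d\tilde x .
\]
Next I would apply the symmetrised Poincaré inequality on $B:=B_L\times B_L$: writing $g_B$ for the Lebesgue average of $g$ over $B$ and $C_P(L,q)$ for its $L^q$ Poincaré constant, the double integral is at most $2^q|B_L|^2\int_B|g-g_B|^q\,dx\leq 2^q|B_L|^2 C_P(L,q)\int_B|\nabla g|^q\,dx$. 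Finally, restoring the Gibbs weight via $1\leq B_*(L)\,e^{-H}$ on $B$ and then extending the integral to all of $\mathbb{H}^2$ gives $\int_B|\nabla g|^q\,dx\leq B_*(L)\,Z^{\{\sim i\},\omega}\,\mathbb{E}^{\{\sim i\},\omega}|\nabla_{\{\sim i\}}f|^q$; collecting the constants and using $1/Z^{\{\sim i\},\omega}\leq B^*(L)$ the lemma follows with $D_L=2^q C_P(L,q)\,|B_L|^2\,B_*(L)\,B^*(L)$.

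The only non-elementary inputs are the two facts used in the middle step. The first is the $L^q$ Poincaré inequality $\int_{B_L}|h-h_{B_L}|^q\leq C(B_L,q)\int_{B_L}|\nabla h|^q$ for the sub-gradient on the metric ball $B_L\subset\mathbb{H}$, a special case of Jerison's Poincaré inequality for H\"ormander systems, valid for every $q\in[1,\infty)$. The second is its tensorisation to $B_L\times B_L$: with $\mu_1,\mu_2$ the normalised Lebesgue measures on $B_L$ and $\mu=\mu_1\otimes\mu_2$, the splitting $|h-\mu h|^q\leq 2^{q-1}\bigl(|h-\mu_1 h|^q+|\mu_1 h-\mu_2\mu_1 h|^q\bigr)$, followed by the one-variable Poincaré inequality applied to each term and Jensen's inequality $|\nabla_2(\mu_1 h)|^q=|\mu_1\nabla_2 h|^q\leq\mu_1|\nabla_2 h|^q$, yields $C_P(L,q)\leq 2^{q-1}C(B_L,q)$. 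I expect the only genuine bookkeeping to be keeping track of the $L$-dependent quantities $B_*(L)$, $B^*(L)$, $|B_L|$ and $C(B_L,q)$, all of which blow up as $L\to\infty$; but for each fixed $L$ the constant $D_L$ is finite, which is precisely what the lemma asserts.
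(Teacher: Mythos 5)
Your proof is correct and follows essentially the same route as the paper: use hypothesis (H$^*$) to compare the Gibbs density with Lebesgue measure on the ball from above and below, apply the $L^q$ Poincar\'e inequality for the sub-gradient on Carnot--Carath\'eodory balls (the paper cites [V-SC-C]) in its symmetrised double-integral form, and then restore the Gibbs weight to get a constant $D_L$ depending only on $L$. The only difference is that you make explicit two points the paper glosses over --- the trivial case when some $\omega_j\notin B_L$ forces the indicator to vanish, and the tensorisation of the one-ball Poincar\'e inequality to $B_L\times B_L$ --- which is harmless extra care.
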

\begin{proof} 
 \begin{align}\nonumber I_1:&= \mathbb{E}^{\{\sim i\}}\otimes\mathbb{\tilde E}^{\{\sim i\}}_{i}\vert f-\tilde f\vert^q\mathcal{I}_{\{\eta(i,\omega)+\tilde\eta(i,\omega)\leq L\}}\\  \nonumber&=\int\int\vert f-\tilde f\vert^q\mathbb{\mathcal{I}}_{\{\eta(i,\omega)+\tilde\eta(i,\omega)\leq L\}}\rho_{i}\hat\rho_{i}dX_{\{\sim i \}}d\hat X_{\{\sim i \}}
\\ & \nonumber\leq \int_{\{\eta(i,\omega)\leq L\}}\int_{\{\tilde\eta(i,\omega)\leq L\}}\vert f-\tilde f\vert^q\rho_{i}\hat\rho_i \mathbb{\mathcal{I}}_{\{\eta(i,\omega)\leq L\}}\mathbb{\mathcal{I}}_{\{\tilde\eta(i,\omega)\leq L\}}dX_{\{\sim i \}}d\hat X_{\{\sim i \}} \end{align}
where $\rho_{i}=\frac{e^{-H^{\{\sim i\},\omega}}}{\int e^{-H^{\{\sim i\},\omega}}dX_{\{\sim i \}} }$. Since   on $\{\eta(i,\omega)\leq L\}$ we have $d( x_{j} )\leq L, \ j=i-1,i+1$ and $\sum_{j\sim\{ i-1,i+1\}}d( \omega_{j})\leq L$, according to hypothesis (H$^*$) we can bound $\int e^{-H^{\{\sim i\},\omega}}dX_{\{\sim i \}}$ from bellow independently on the boundary conditions $\omega$. This leads to 
\begin{align}\label{1.11prop1.5p5}I_{1}\nonumber\leq&\frac{D^{*}(L)}{\int e^{-H^{\{\sim i\},\omega}}dX_{\{\sim i \}} }\times\ \\& \times \int_{\{\eta(i,\omega)\leq L\}}\int_{\{\tilde\eta(i,\omega)\leq L\}}\vert f-\tilde f\vert^q\mathbb{\mathcal{I}}_{\{\eta(i,\omega)\leq L\}}\mathbb{\mathcal{I}}_{\{\tilde\eta(i,\omega)\leq L\}}dX_{\{\sim i \}}d\hat X_{\{\sim i \}}
\end{align}for some positive constant $D^{*}(L)$. If we set  $B_R=\{x\in \mathbb{H}:d(x)\leq R\}$  then  (\ref{1.11prop1.5p5}) gives 
\begin{align}I_{1}&\leq\frac{D^{*}(L)}{\int e^{-H^{\{\sim i\},\omega}}dX_{\{\sim i \}} } \nonumber  \int_{B_{L}}\int_{B_{L}}\vert f\mathbb{\mathcal{I}}_{\{\eta(i,\omega)\leq L\}}-\tilde f\mathbb{\mathcal{I}}_{\{\tilde\eta(i,\omega)\leq L\}}\vert^qdX_{\{\sim i \}}d\hat X_{\{\sim i \}}\\ &\label{1.12prop1.5p5}\leq \frac{D^{*}(L)A_{L}}{\int e^{-H^{\{\sim i\},\omega}}dX_{\{\sim i \}} }  \int_{B_{L}}\vert\nabla_{\{\sim i \}} f\vert^q\mathbb{\mathcal{I}}_{\{\eta(i,\omega)\leq L\}} dX_{\{\sim i \}} \end{align}
where above we used the Poincar\'e Inequality in the Carnot-Caratheodory ball on the Heisenberg group with respect to the Haar measure (see \cite{V-SC-C}) with  constant $A_{L}$ depending only on the radius.  From (\ref{1.12prop1.5p5}) and hypothesis (H$^*$), we obtain  \begin{align*} I_{1}&\leq D_{*}(L)D^{*}(L)A_{L}  \int_{\{\eta(i,\omega)\leq L\}}\vert\nabla_{\{\sim i \}} f\vert^q  d\mathbb{E}^{\{\sim i\}}\\ & \leqslant D_{*}(L)D^{*}(L)A_{L}\mathbb{E}^{\{\sim i\}}\vert \nabla_{\{\sim i \}} f\vert^q
\end{align*}
And the lemma follows for appropriate constant $D_L$.
\end{proof} The following
 proposition gives a Spectral Gap type inequality for the measure  $\mathbb{E}^{\{i\},\omega}$.
\begin{proposition}\label{prp3.2Pap2}If conditions $(C1), (C2)$ and $(C3)$ are satisfied, then  the following                   
                     Spectral Gap type
inequality
$$\nu\mathbb{E}^{\{\sim i\}}\vert f-\mathbb{E}^{\{\sim i\}}f\vert^q\leq M\sum_{r=i-2}^{i+2}\nu\left\vert \nabla_{r}
f
\right\vert^q+M\sum_{ r=3 }^{\infty} J_0^{r-2}\nu\left\vert \nabla_{k\pm r} f
\right\vert^q$$
holds for a positive constant $M$.\end{proposition}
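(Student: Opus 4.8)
The plan is to estimate $\nu\mathbb{E}^{\{\sim i\}}|f-\mathbb{E}^{\{\sim i\}}f|^q=\nu|h_i|^q$, where $h_i:=f-\mathbb{E}^{\{\sim i\}}f$ and we use the DLR identity $\nu\mathbb{E}^{\{\sim i\}}=\nu$, by splitting it into a multiple of $Q(i,i)$, a multiple of the local gradient term produced by the ball Poincar\'e inequality of Lemma \ref{PoincareBall}, and a copy of $\nu|h_i|^q$ carrying a constant that can be made strictly less than $\tfrac12$. First I would write, using $\nu\mathbb{E}^{M(i)}=\nu$ and $M(i)=\Lambda(i)^c$, that $\nu|h_i|^q=\nu_{\Lambda(i)}\big(\mathbb{E}^{M(i)}|h_i|^q\big)=\nu_{\Lambda(i)}(g^q)$ with $g:=(\mathbb{E}^{M(i)}|h_i|^q)^{1/q}$, and apply the $q$-Spectral Gap inequality for $\nu_{\Lambda(i)}$, valid by (H1) and Remark \ref{rem1.1}, to $g$: $\nu_{\Lambda(i)}(g^q)\le 2^{q-1}\nu_{\Lambda(i)}|g-\nu_{\Lambda(i)}g|^q+2^{q-1}(\nu_{\Lambda(i)}g)^q\le 2^{q-1}\hat c\,Q(i,i)+2^{q-1}(\nu_{\Lambda(i)}g)^q$, since $\nu_{\Lambda(i)}|\nabla_{\Lambda(i)}g|^q=Q(i,i)$ by definition. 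By Proposition \ref{[Pa]Q(k,k)} (together with $\mathbb{E}^{i-1}\mathbb{E}^{i+1}=\mathbb{E}^{\{\sim i\}}$, so that $\nu|f-\mathbb{E}^{i-1}\mathbb{E}^{i+1}f|^q=\nu|h_i|^q$), $Q(i,i)\le J^qS\,\nu|h_i|^q+S\sum_{r=i-2}^{i+2}\nu|\nabla_rf|^q+S\sum_{r\ge3}J_0^{r-2}\nu|\nabla_{i\pm r}f|^q$ with $J_0=J^{(q-1)/4}$, so this contribution already has the shape of the asserted right-hand side plus an absorbable $J^q$-term.

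It then remains to control the mean term $(\nu_{\Lambda(i)}g)^q$, for which I would fix a large level $L$ and split on the ball, $\nu_{\Lambda(i)}g=\nu_{\Lambda(i)}(g\,\mathcal I_{\{\eta\le L\}})+\nu_{\Lambda(i)}(g\,\mathcal I_{\{\eta>L\}})$, where $\eta=\eta(i,\omega)$ is a function of the coordinates in $\Lambda(i)$ only. For the far piece, H\"older gives $\nu_{\Lambda(i)}(g\,\mathcal I_{\{\eta>L\}})\le(\nu_{\Lambda(i)}g^q)^{1/q}\big(\nu_{\Lambda(i)}\{\eta>L\}\big)^{1/p}=(\nu|h_i|^q)^{1/q}\big(\nu_{\Lambda(i)}\{\eta>L\}\big)^{1/p}$ and $\nu_{\Lambda(i)}\{\eta>L\}\le e^{-\epsilon L}\nu_{\Lambda(i)}e^{\epsilon\eta}$; the exponential moment $\nu_{\Lambda(i)}e^{\epsilon\eta}$ is finite uniformly in $\omega$ and $i$, either by (H4) after a H\"older product over the five sites of $\Lambda(i)$, or directly from (H1) via the Herbst/decay-of-tails consequence of the $q$-Log-Sobolev inequality for $\nu_{\Lambda(i)}$ (cf.\ Corollary \ref{cor[I-P]1.10}) together with the eikonal equation $|\nabla d|=1$ (Proposition \ref{prop221IP}). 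Hence $(\nu_{\Lambda(i)}(g\,\mathcal I_{\{\eta>L\}}))^q\le (e^{-\epsilon L}\hat K)^{q-1}\,\nu|h_i|^q$, which is absorbable for $L$ large. For the near piece I would raise the power, $(\nu_{\Lambda(i)}(g\,\mathcal I_{\{\eta\le L\}}))^q\le\nu_{\Lambda(i)}(g^q\mathcal I_{\{\eta\le L\}})=\nu(|h_i|^q\mathcal I_{\{\eta\le L\}})=\nu\mathbb{E}^{\{\sim i\}}(|h_i|^q\mathcal I_{\{\eta\le L\}})$ (moving the $\Sigma_{\Lambda(i)}$-measurable indicator through $\mathbb{E}^{M(i)}$), pass to a replica $\mathbb{E}^{\{\sim i\}}(|h_i|^q\mathcal I_{\{\eta\le L\}})\le\mathbb{E}^{\{\sim i\}}\otimes\tilde{\mathbb{E}}^{\{\sim i\}}_i(|f-\tilde f|^q\mathcal I_{\{\eta\le L\}})$ (Jensen on the replica, using $\widetilde{\mathbb{E}^{\{\sim i\}}f}=\mathbb{E}^{\{\sim i\}}f$, so $f-\tilde f=h_i-\tilde h_i$), and use $\mathcal I_{\{\eta\le L\}}\le\mathcal I_{\{\eta+\tilde\eta\le2L\}}+\mathcal I_{\{\tilde\eta>L\}}$. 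On $\{\eta+\tilde\eta\le2L\}$ Lemma \ref{PoincareBall} yields $D_{2L}\mathbb{E}^{\{\sim i\}}|\nabla_{\{\sim i\}}f|^q$, whose $\nu$-integral is $D_{2L}(\nu|\nabla_{i-1}f|^q+\nu|\nabla_{i+1}f|^q)\le D_{2L}\sum_{r=i-2}^{i+2}\nu|\nabla_rf|^q$; the residual $\mathcal I_{\{\tilde\eta>L\}}$-term, after $|f-\tilde f|^q\le2^{q-1}(|h_i|^q+|\tilde h_i|^q)$ and integration of the replica, is again $e^{-\epsilon L}$ times a finite exponential moment times $\nu|h_i|^q$. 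Collecting all pieces and choosing first $L$ large and then $J$ small moves every $\nu|h_i|^q$-term to the left, leaving the stated inequality.

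The main obstacle is the out-of-ball estimate: one has to bound $\nu\mathbb{E}^{\{\sim i\}}(|h_i|^q\mathcal I_{\{\eta>L\}})$ and its replica analogue by something that decays in $L$ with a constant uniform in the boundary condition $\omega$ and the site $i$, which is exactly where the exponential integrability of $d(x_j)$, $j\in\Lambda(i)$ (from (H1)/(H4)) and the exponential bounds of Remark \ref{rem2.1nPap1} (coming from (H2)) enter; a point to be careful about is that the H\"older exponents must not force uncontrollable higher moments of $h_i$, which is why the ball splitting is carried out at the level of $g=(\mathbb{E}^{M(i)}|h_i|^q)^{1/q}$ rather than of $|h_i|$ itself. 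The remainder is bookkeeping: checking that the surviving gradient contributions stay inside the near block $\{i-2,\dots,i+2\}$ and that the long-range part is the geometrically decaying tail $\sum_{r\ge3}J_0^{r-2}\nu|\nabla_{i\pm r}f|^q$ inherited from Proposition \ref{[Pa]Q(k,k)}, and noting that $D_{2L}$ enters only through the fixed, once-and-for-all choice of $L$, so the final constant $M$ is finite.
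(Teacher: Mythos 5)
Your architecture is genuinely different from the paper's: you first apply the $q$-Spectral Gap of $\nu_{\Lambda(i)}$ (from (H1)) to $g=(\mathbb{E}^{M(i)}\vert h_i\vert^q)^{1/q}$, which produces the $Q(i,i)$-term directly, and then you split the mean term $(\nu_{\Lambda(i)}g)^q$ on the ball $\{\eta\le L\}$; the paper instead goes straight to the replica, splits on $\{\eta+\tilde\eta\le L\}$, bounds $\mathcal{I}_{\{\eta+\tilde\eta>L\}}\le(\eta+\tilde\eta)/L$ and decouples with the Deuschel--Stroock entropic inequality \eqref{3.4Pap2} plus the (H1) Log-Sobolev inequality for $\nu_{\Lambda(i)}$ and (H4). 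Most of your steps are sound: the $Q(i,i)$ contribution is absorbed via Proposition~\ref{[Pa]Q(k,k)} and small $J$, the far piece of the mean is correctly handled by H\"older with exponent exactly $q$ (so only $\nu\vert h_i\vert^q$ appears) plus exponential Chebyshev and (H4), and the near piece by the replica and Lemma~\ref{PoincareBall}. (Note, as you do, that the proof needs (H4) even though the statement lists only (H1)--(H3); the paper's own proof uses (H4) at the same point.)

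The genuine gap is your treatment of the residual replica term $\nu\,\mathbb{E}^{\{\sim i\}}\otimes\tilde{\mathbb{E}}^{\{\sim i\}}\left(\vert f-\tilde f\vert^q\mathcal{I}_{\{\tilde\eta>L\}}\right)$. After $\vert f-\tilde f\vert^q\le2^{q-1}(\vert h_i\vert^q+\vert\tilde h_i\vert^q)$ and integration of the replica you are left with $\nu\left[\left(\mathbb{E}^{\{\sim i\}}\vert h_i\vert^q\right)\mathbb{E}^{\{\sim i\}}\mathcal{I}_{\{\eta>L\}}\right]$ and $\nu\left(\vert h_i\vert^q\mathcal{I}_{\{\eta>L\}}\right)$, and these are \emph{not} ``$e^{-\epsilon L}$ times a finite exponential moment times $\nu\vert h_i\vert^q$'': the factor $\mathbb{E}^{\{\sim i\},\omega}\mathcal{I}_{\{\eta>L\}}$ (resp.\ the event $\{\eta>L\}$) is a function of the $\Lambda(i)$-coordinates, it is not small uniformly in $\omega$ (it equals $1$ when $d(\omega_i)$ is large), there is no uniform-in-$\omega$ exponential moment for $\mathbb{E}^{\{\sim i\},\omega}$ -- that non-uniformity is precisely the difficulty this paper addresses -- and under $\nu$ this factor is correlated with $\mathbb{E}^{\{\sim i\}}\vert h_i\vert^q$, so it cannot simply be pulled out. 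With the available hypotheses the only ways to decouple are H\"older (which forces moments of $h_i$ of order strictly larger than $q$, which you rightly excluded) or the entropic inequality \eqref{3.4Pap2} under $\nu_{\Lambda(i)}$ with $v\propto\mathbb{E}^{M(i)}\vert h_i\vert^q$, whose entropy term is then bounded by the (H1) Log-Sobolev inequality, yielding an extra $\frac{C}{\epsilon L}Q(i,i)$ plus $\frac{K}{\epsilon L}\nu\vert h_i\vert^q$ -- exactly the mechanism of the paper's proof, never invoked in your treatment of this term. The argument is repairable by inserting that step (the extra $Q(i,i)$ is again absorbed via Proposition~\ref{[Pa]Q(k,k)} and small $J$, the extra $\nu\vert h_i\vert^q$ by large $L$), but as written the key decoupling idea is missing at the one place where it is indispensable.
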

\begin{proof}
  If $\mathbb{\tilde E}^{\{\sim i\}}$ is an isomorphic copy of $\mathbb{E}^{\{\sim i\}}$ we can
then write
\begin{align}\nonumber\nu\vert f-\mathbb{E}^{\{\sim i\}}f\vert^q=&\nu\mathbb{E}^{\{\sim i\}}\vert f-\mathbb{E}^{\{\sim i\}}f\vert^q\leq\nu\mathbb{E}^{\{\sim i\}}\otimes\mathbb{\tilde E}^{\{\sim i\}}\vert f-\tilde f\vert^q\\   =&
\nu\mathbb{E}^{\{\sim i\}}\otimes\mathbb{\tilde E}^{\{\sim i\}}_{i}\vert f-\tilde f\vert^q\mathcal{I}_{\{\eta(i,\omega)+\tilde\eta(i,\omega)\leq L\}}\nonumber \\  &+\label{3.1Pap2}\nu\mathbb{E}^{\{\sim i\}}\otimes\mathbb{\tilde E}^{\{\sim i\}}\vert f-\tilde f\vert^q\mathbb{\mathcal{I}}_{\{\eta(i,\omega)+\tilde\eta(i,\omega)> L\}}\end{align}
where we have denoted $$\eta(i,\omega)=d( x_{i-1} )+d( x_{i+1} )+\sum_{j\sim\{\ i-1,i+1\}}d(\omega_{j} )$$ and $\mathcal{I}_A$ the indicator function of set $A$. For the first term on the right hand side of (\ref{3.1Pap2}) we can use Lemma \ref{PoincareBall} to obtain
\begin{equation*} \mathbb{E}^{\{\sim i\}}\otimes\mathbb{\tilde E}^{\{\sim i\}}\vert f-\tilde f\vert^q\mathbb{\mathcal{I}}_{\{\eta(i,\omega)+\tilde\eta(i,\omega)\leq L\}} \leq D_{L}\mathbb{E}^{\{\sim i\}}\otimes\mathbb{\tilde E}^{\{ \sim i\}}\left\vert \nabla_{\{\sim i\}}f\right\vert^q\end{equation*}
If we apply the Gibbs measure at the last inequality we obtain
 \begin{equation}\label{3.2Pap2}\nu\ \mathbb{E}^{\{\sim i\}}\otimes\mathbb{\tilde E}^{\{\sim i\}}\vert f-\tilde f\vert^q\mathbb{\mathcal{I}}_{\{\eta(i,\omega)+\tilde\eta(i,\omega)\leq L\}} \leq D_{L}\nu\left\vert \nabla_{\{\sim i\}}f\right\vert^q\end{equation}
 
  For the second term in~\eqref{3.1Pap2} we can write
\begin{align}\nonumber \nu\mathbb{E}^{\{\sim i\}}\otimes&\mathbb{\tilde E}^{\{\sim i\}}\vert f-\tilde f\vert^q\mathbb{\mathcal{I}}_{\{\eta(i,\omega)+\tilde\eta(i,\omega)> L\}}\leq\nu\mathbb{E}^{\{\sim i\}}\otimes\mathbb{\tilde E}^{\{\sim i\}}\vert f-\tilde f\vert^q\frac{\eta(i,\omega)+\tilde\eta(i,\omega)}{L}\\\leq
 &\frac{2^q2}{L}\nu\mathbb{E}^{\{\sim i\}}\left(\vert f-\mathbb{E}^{\{\sim i\}}f\vert^q\eta(i,\omega)\right)+\frac{2^q2}{L}\nu\mathbb{E}^{\{\sim i\}}\vert f-\mathbb{E}^{\{\sim i\}}f\vert^q\mathbb{E}^{\{\sim i\}}\eta(i,\omega)\nonumber
\\ =& \frac{2^q2}{L}\nu\left(\vert f-\mathbb{E}^{\{\sim i\}}f\vert^q\left(\eta(i,\omega)+\mathbb{E}^{\{\sim i\}}\eta(i,\omega)\right)\right)\nonumber \\ =& \label{3.3Pap2}\frac{2^q2}{L}\nu_{\Lambda(i)}\left[ \left(\mathbb{E}^{M(i)}
\vert f-\mathbb{E}^{\{\sim i\}}f\vert^q\right)\left(\eta(i,\omega)+\mathbb{E}^{\{\sim i\}}\eta(i,\omega)\right) \right]
\end{align}
where above we used that $\eta(i,\omega)+\mathbb{E}^{\{\sim i\}}\eta(i,\omega)$ is localised in $\Lambda(i)$ and that $M(i)=\mathbb{Z}\smallsetminus\Lambda(i)$. On the right hand side of~\eqref{3.3Pap2}  we can use the following Deuschel-Stroock entropic inequality
(see \cite{D-S})
\begin{equation}\label{3.4Pap2}\forall t>0, \ \mu(uv)\leq\frac{1}{t}log\left(\mu(e^{tu})\right)+\frac{1}{t}\mu(vlogv)\end{equation}
 for any   measure $\mu$  and $v\geq 0$ such that $\mu( v)=1$. Then from~\eqref{3.3Pap2}
 and~\eqref{3.4Pap2}  we will obtain
 \begin{align} \nonumber\nu\mathbb{E}^{\{\sim i\}}\otimes&\mathbb{\tilde E}^{\{\sim i\}}\vert f-\tilde f\vert^q\mathbb{\mathcal{I}}_{\{\eta(i,\omega)+\tilde\eta(i,\omega)> L\}}\\\leq
\nonumber
&  \frac{2^q2}{\epsilon L }\nu_{\Lambda(i)}\vert f-\mathbb{E}^{\{\sim i\}}f\vert^{q}log\frac{\vert f-\mathbb{E}^{\{\sim i\}}f\vert^{q}}{\nu_{\Lambda(i)}\vert f-\mathbb{E}^{\{\sim i\}}f\vert^{q}} \\ &+\label{3.5Pap2}\frac{2^q2}{\epsilon L}\left(log\nu_{\Lambda(i)}e^{\epsilon\left(\eta(i,\omega)+\mathbb{E}^{\{\sim i\}}\eta(i,\omega)\right)}\right)\nu_{\Lambda(i)}\mathbb{E}^{M(i)}\vert f-\mathbb{E}^{\{\sim i\}}f\vert^{q}\end{align}
The first term on the right hand side of~\eqref{3.5Pap2} can be bounded by the Log-Sobolev inequality for        
 $\nu_{\Lambda(i)}$  from hypothesis (C1)
   \begin{align} \nonumber \nu_{\Lambda(i)}\vert f-\mathbb{E}^{\{\sim i\}}
f\vert^{q}&log\frac{\vert f-\mathbb{E}^{\{\sim i\}}
f\vert^{q}}{\nu_{\Lambda(i)}\vert f-\mathbb{E}^{k-1}\mathbb{E}^{k+1}
f\vert^{q}}\\ &   \label{3.6Pap2}\leq C\nu_{\Lambda(i)}\left\vert \nabla_{\Lambda(i)}(\mathbb{E}^{M(i)}
\vert f-\mathbb{E}^{\{\sim i\}}f\vert^{q})^{\frac{1}{q}}\right \vert ^q=CQ(i,i)\end{align}
If we combine~\eqref{3.5Pap2} and~\eqref{3.6Pap2} together with hypothesis (C2) we get
  \begin{align} \nonumber\nu\mathbb{E}^{\{\sim i\}}\otimes\mathbb{\tilde E}^{\{\sim i\}}\vert f-\tilde f\vert^q&\mathcal{I}_{\{\eta(i,\omega)+\tilde\eta(i,\omega)> L\}}\\
& \label{3.7Pap2}\leq \frac{2^q2C}{\epsilon L }Q(i,i)+ \frac{2^q2K}{\epsilon L}\nu_{\Lambda(i)}\mathbb{E}^{M(i)}\vert f-\mathbb{E}^{\{\sim i\}}f\vert^{q}\end{align}
If we put together relationships ~\eqref{3.1Pap2}, ~\eqref{3.2Pap2} and~\eqref{3.7Pap2} we obtain
\begin{align}\nu\vert f-\mathbb{E}^{\{\sim i\}}f\vert^q\leq D_{L} B\nu\left\vert \nabla_{\{\sim i\}}f\right\vert^q+\frac{2^q2C}{\epsilon L }Q(i,i)
\label{3.8Pap2}+ \frac{2^q2K}{\epsilon L}\nu\vert f-\mathbb{E}^{\{\sim i\}}f\vert^{q}\end{align}
where the constant $K$ is as in (C2). If we use  the bound for $Q(i,i)$ from Proposition~\ref{[Pa]Q(k,k)}, then~\eqref{3.8Pap2} gives
\begin{align}\nonumber\nu\vert f-\mathbb{E}^{\{\sim i\}}f\vert^q\leq& D_{L} B\nu\left\vert \nabla_{\{\sim i\}}f\right\vert^q+\frac{2^q2CS}{\epsilon L }\sum_{r=i-2}^{i+2}\nu\left\vert \nabla_{r}
f
\right\vert^q\\  &+ \label{3.9Pap2}
\frac{2^q2CS}{\epsilon L }\sum_{ r=3 }^{\infty} J_0^{r-2}\nu\left\vert \nabla_{i\pm r} f
\right\vert^q+  \frac{2^q2K+J^{q}S2^q2C}{L\epsilon  }\nu\vert f-\mathbb{E}^{\{\sim i\}}f\vert^{q}\end{align}
For $L$ sufficiently large such that  $1-\frac{2^q2K+J^{q}S2^q2C}{L\epsilon  }>\frac{1}{2}$ we obtain
\begin{align*}\left( 1-\frac{2^q2K+J^{q}S2^q2C}{L\epsilon  } \right)\nu\vert f-\mathbb{E}^{ i}f\vert^q\leq &D_L\nu\left\vert \nabla_{\{\sim i\}}f\right\vert^q+
\frac{2^q2CS}{\epsilon L }\sum_{r=i-2}^{i+2}\nu\left\vert \nabla_{r}
f
\right\vert^q\\  &+\frac{2^q2S}{\epsilon L }\sum_{ r=3 }^{\infty} J_0^{r-2}\nu\left\vert \nabla_{i\pm r} f
\right\vert^q\end{align*}
 Which implies
$$\nu\vert f-\mathbb{E}^{\{\sim i\}}f\vert^q\leq M\sum_{r=i-2}^{i+2}\nu\left\vert \nabla_{r}
f
\right\vert^q+M\sum_{ r=3 }^{\infty} J_0^{r-2}\nu\left\vert \nabla_{i\pm r} f
\right\vert^q$$
 for some constant $M>0$.\end{proof}
If we combine together  Proposition~\ref{[Pa]Q(k,k)} and Proposition~\ref{prp3.2Pap2}, the following explicit bound for $Q(k,k)$
directly follows.  \begin{corollary}\label{crl3.3Pap2}Suppose that  hypothesis $(C1)$ and $(C2)$  are satisfied. Then \begin{align*} Q(k,k)\leq D\sum_{r=k-2}^{k+2}\nu\left\vert \nabla_{r}
f
\right\vert^q+D\sum_{ r=3 }^{\infty} J_0^{r-2}\nu\left\vert \nabla_{k\pm r} f
\right\vert^q\end{align*}where the constant $D=J^{q}SM+S$ and $J_0=J^\frac{q-1}{4}$.
\end{corollary}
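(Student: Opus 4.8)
The plan is to derive Corollary~\ref{crl3.3Pap2} by simply combining the two preceding results, Proposition~\ref{[Pa]Q(k,k)} and Proposition~\ref{prp3.2Pap2}. Inspecting the bound for $Q(k,k)$ in Proposition~\ref{[Pa]Q(k,k)}, one sees that every term is already of the desired form except for $J^{q}S\,\nu\vert f-\mathbb{E}^{k-1}\mathbb{E}^{k+1}f\vert^{q}$; hence the whole argument reduces to bounding this single quantity by a combination of Dirichlet terms.

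First I would note that, since $k-1$ and $k+1$ sit at lattice distance $2$, the one-site conditional expectations $\mathbb{E}^{k-1}$ and $\mathbb{E}^{k+1}$ commute and factorize, so $\mathbb{E}^{k-1}\mathbb{E}^{k+1}=\mathbb{E}^{\{\sim k\}}$ (the product property recalled in Remark~\ref{rem1.1}). Invoking the DLR equation $\nu\mathbb{E}^{\{\sim k\}}=\nu$, I would then rewrite
$$\nu\vert f-\mathbb{E}^{k-1}\mathbb{E}^{k+1}f\vert^{q}=\nu\mathbb{E}^{\{\sim k\}}\vert f-\mathbb{E}^{\{\sim k\}}f\vert^{q},$$
which is exactly the left-hand side of the Spectral Gap type inequality of Proposition~\ref{prp3.2Pap2} with $i=k$. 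Applying that proposition yields
$$\nu\vert f-\mathbb{E}^{k-1}\mathbb{E}^{k+1}f\vert^{q}\leq M\sum_{r=k-2}^{k+2}\nu\vert\nabla_{r}f\vert^{q}+M\sum_{r=3}^{\infty}J_{0}^{r-2}\nu\vert\nabla_{k\pm r}f\vert^{q}.$$

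Substituting this estimate back into the bound of Proposition~\ref{[Pa]Q(k,k)} and gathering the near terms ($r=k-2,\dots,k+2$) and the geometrically decaying tail terms ($r\geq 3$) separately, each group acquires the coefficient $J^{q}SM+S$, giving the constant $D=J^{q}SM+S$ while $J_{0}=J^{\frac{q-1}{4}}$ is inherited unchanged from Proposition~\ref{[Pa]Q(k,k)}. There is no genuine obstacle here: the corollary is a bookkeeping consequence of the two preceding estimates. The only points requiring a little care are the identity $\mathbb{E}^{k-1}\mathbb{E}^{k+1}=\mathbb{E}^{\{\sim k\}}$ and the matching of the summation ranges (with the running index $i$ of Proposition~\ref{prp3.2Pap2} relabelled as $k$) between that proposition and the statement of the corollary; once these are in place the inequality follows immediately by addition.
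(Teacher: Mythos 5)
Your proposal is correct and is essentially the paper's own argument: the corollary is obtained by substituting the Spectral Gap type bound of Proposition~\ref{prp3.2Pap2} (using $\mathbb{E}^{k-1}\mathbb{E}^{k+1}=\mathbb{E}^{\{\sim k\}}$ and the DLR property) into the estimate of Proposition~\ref{[Pa]Q(k,k)}, which yields the constant $D=J^{q}SM+S$ exactly as you describe.
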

\section{(LSq) type inequality under (H1).} Bellow an analogue result for the Log-Sobolev type inequality for  $\mathbb{E}^{ \{\sim i\},\omega}$ is presented assuming hypothesis $(H1)$.
\begin{proposition}\label{prp3.4Pap2} Assume  hypothesis   (H1). If the conditions  (C1)-(C3) are satisfied, then the following
Log-Sobolev  type inequality holds  
 $$\nu(\left\vert f\right\vert^qlog\frac{\left\vert f\right\vert^q}{\mathbb{E}^{ \{\sim i\},\omega}\left\vert f\right\vert^q}) \leq R_{1}\sum_{r=i-2}^{i+2}\nu\left\vert \nabla_{r}
f
\right\vert^q+R_{1}\sum_{ r=3 }^{\infty} J_0^{r-2}\nu\left\vert \nabla_{i\pm r} f
\right\vert^q$$for some positive constant $R_{1}$.\end{proposition}

  \begin{proof}
 Assume $f\geq 0$. We will use the Log-Sobolev Inequality for the $\mu$ measure to derive conditions for the Log-Sobolev inequality for the measure $\mathbb{E}^{\{\sim i\}}$. From hypothesis (H1) and Remark \ref{rem1.1} the product measure  $\mu(dx_{i+1})\otimes \mu(dx_{i-1})$ satisfies
the LSq with constant $c$.
\begin{align}\mu(dx_{i+1})\otimes \mu(dx_{i-1})(g^q\log\frac{g^q}{\mu(dx_{i+1})\otimes \mu(dx_{i-1}) g^q}&)\leq \label{3.10Pap2}c\mu(dx_{i+1})\otimes \mu(dx_{i-1})\left\vert \nabla g
\right\vert^q\end{align}
for $g\geq 0$. Define the function
$$h^i=-\sum_{j=\{i-1,i+1\},t\sim j}J_{j,t}V(x_{j},\omega_{t})$$
The function $h^i$ is localized   in $\Lambda(i)$. We also denote   $$\Phi^i=\phi(x_{i-1})+\phi(x_{i+1})$$
 Then inequality~\eqref{3.10Pap2} for  $g=e^\frac{{h^i}}{q} f,f\geq0$ gives
 \begin{align}\nonumber\int e^{-\Phi^i}&(e^{h^i} f^qlog\frac{e^{h^i} f^q}{\int e^{-\Phi^i}(e^{h^i} f^q)dx_{i-1}dx_{i+1}  \int e^{-\Phi^i}
dx_{i-1}dx_{i+1}})dx_{i-1}dx_{i+1} \\  &
\ \ \ \ \ \ \  \   \ \ \ \  \   \   \    \    \    \ 
 \    \    \     \   \    \ \    \    \    \    \    \    \   \    \    \
  \label{3.11Pap2}\leq c\sum_{j=i-1,i+1} \int e^{-\Phi^i} \left\vert \nabla_{j} (e^\frac{h^i}{q} f)
\right\vert^qdx_{i-1}dx_{i+1}\end{align}
 \noindent Denote by $I_r$ and $I_l$ the right and left hand side of~\eqref{3.11Pap2} respectively.
 If we use the Leibnitz rule
for the gradient on the right hand side of~\eqref{3.11Pap2} we have
 \begin{align}\nonumber      I_r\leq&2^{q-1}c\int  e^{-\Phi^i} \left\vert e^\frac{{h^i}}{q} ( \nabla_{j}  f)
\right\vert^q dx_{i-1}dx_{i+1}  \\  & \label{3.12Pap2}+2^{q-1} c\sum_{j=i-1,i+1} \int e^{-\Phi^i} \left\vert f (\nabla_{j} e^\frac{{h^i}}{q}  )
\right\vert^q dx_{i-1}dx_{i+1}= \nonumber\\ &  \left( \int e^{-\Phi^i+{h^i}}f^q
dx_{i-1}dx_{i+1} \right) c2^{q-1}\left( \mathbb{E}^{ \{\sim i\},\omega}\left\vert\nabla_{j} f
\right\vert^q+\frac{ 1}{q^{q}}\mathbb{E}^{ \{\sim i\},\omega}f^{q} \sum_{j=i-1,i+1}\left\vert  \nabla_{j}{h^i}
\right\vert^q\right)\end{align}
On the left hand side of~\eqref{3.11Pap2} we form the entropy for the measure $\mathbb{E}^{ \{\sim i\},\omega}$ measure with phase $\Phi^i- {h^i}$.

 \begin{align}\nonumber I_l = &\int e^{-\Phi^i+{h^i}} f^qlog\frac{f^q}{\int e^{-\Phi^i+{h^i}}f^q
dx_{i-1}dx_{i+1} / \int e^{-\Phi^i+{h^i}}
dx_{i-1}dx_{i+1}}dx_{i-1}dx_{i+1}\nonumber \\ &+\int e^{-\Phi^i+{h^i}} f^qlog\frac{\left(\int e^{-\Phi^i}
dx_{i-1}dx_{i+1}\right)  e^{h^i}}{ \int e^{-\Phi^i+{h^i}}
dx_{i-1}dx_{i+1}}dx_{i-1}dx_{i+1}\nonumber\\= &\nonumber\left( \int e^{-\Phi^i+{h^i}}f^q
dx_{i-1}dx_{i+1} \right) \left(\mathbb{E}^{ \{\sim i\},\omega}(f^qlog\frac{f^q}{ \mathbb{E}^{ \{\sim i\},\omega}f^q}) + \mathbb{E}^{ \{\sim i\},\omega}( f^q{h^i})\right)\nonumber \\ &+\int e^{-\Phi^i+{h^i}} f^qlog\frac{\int e^{-\Phi^i}
dx_{i-1}dx_{i+1}  e^{h^i}}{ \int e^{-\Phi^i+{h^i}}
dx_{i-1}dx_{i+1}}dx_{i-1}dx_{i+1} \end{align}
Since  ${h^i}$ is negative, because of hypothesis (C3), the last equality leads to 
 \begin{align}\label{3.13Pap2} I_l \geq\left( \int e^{-\Phi^i+{h^i}}f^q
dx_{i-1}dx_{i+1} \right)\left(\mathbb{E}^{ \{\sim i\},\omega}(f^qlog\frac{f^q}{ \mathbb{E}^{ \{\sim i\},\omega}f^q}) + \mathbb{E}^{ \{\sim i\},\omega}( f^q{h^i})\right) \end{align}
 Combining~\eqref{3.11Pap2} together with~\eqref{3.12Pap2} and~\eqref{3.13Pap2} we obtain
\begin{align}\nonumber\mathbb{E}^{ \{\sim i\},\omega}(f^qlog\frac{f^q}{\mathbb{E}^{ \{\sim i\},\omega}f^q}) \leq &2^{q-1}c\sum_{j=i-1,i+1}\mathbb{E}^{ \{\sim i\},\omega}\left\vert \nabla_{j} f
\right\vert^q\\ &+\mathbb{E}^{ \{\sim i\},\omega}\left(f^q(\frac{ c2^{q-1}\sum_{j=i-1,i+1}\left\vert  \nabla_{j}{h^i}
\right\vert^q}{q^{q}}-{h^i})\right) \nonumber \end{align}
If we apply the Gibbs measure in  the last relationship we have
\begin{equation}\label{3.14Pap2}\nu(f^qlog\frac{f^q}{\mathbb{E}^{ \{\sim i\},\omega}f^q}) \leq 2^{q-1}c\sum_{j=i-1,i+1}\nu\left\vert \nabla_{j} f
\right\vert^q+\nu(f^q(\frac{ c2^{q-1}\sum_{j=i-1,i+1}\left\vert  \nabla_j{h^i}
\right\vert^q}{q^{q}}-{h^i}))\end{equation}
 From  \cite{B-Z} and \cite{R}, for $1<q<2$ and $q=2$ respectively,  the following estimate
 of the entropy holds
 \begin{align}\label{Rothaus}\mathbb{E}^{ \{\sim i\},\omega}(\left\vert f\right\vert^qlog\frac{\left\vert f\right\vert^q}{\mathbb{E}^{ \{\sim i\},\omega}\left\vert f\right\vert^q}) \leq & \nonumber A\mathbb{E}^{ \{\sim i\},\omega}\left\vert f-\mathbb{E}^{ \{\sim i\},\omega}f \right\vert^{q} \\ &+
 \mathbb{E}^{ \{\sim i\},\omega}\left\vert f-\mathbb{E}^{ \{\sim i\},\omega}f \right\vert^qlog\frac{\left\vert f-\mathbb{E}^{ \{\sim i\},\omega}f \right\vert^q}{\mathbb{E}^{ \{\sim i\},\omega}\left\vert f-\mathbb{E}^{ \{\sim i\},\omega}f \right\vert^q}\end{align}
  for some positive constant $A$.  If we apply the Gibbs measure at the last inequality we get
 \begin{align}\nonumber\nu(\left\vert f\right\vert^qlog\frac{\left\vert f\right\vert^q}{\mathbb{E}^{ \{\sim i\},\omega}\left\vert f\right\vert^q}) \leq &A\nu\vert f-\mathbb{E}^{ \{\sim i\},\omega}f\vert^{q} \\ &
 \label{3.15Pap2}+\nu(\vert f-\mathbb{E}^{ \{\sim i\},\omega}f\vert^qlog\frac{\vert f-\mathbb{E}^{ \{\sim i\},\omega}f\vert^q}{\mathbb{E}^{ \{\sim i\},\omega}\vert f-\mathbb{E}^{ \{\sim i\},\omega}f\vert^q})\end{align}
 We can now use~\eqref{3.14Pap2} to bound the second term on the right hand side of~\eqref{3.15Pap2}. Then we will obtain
 \begin{align}\nonumber\nu(f^qlog &\frac{f^q}{\mathbb{E}^{ \{\sim i\},\omega}f^q}) \leq A\nu\left\vert f-\mathbb{E}^{ \{\sim i\},\omega}f \right\vert^{q} +2^{q-1}c\sum_{j=i-1,i+1}\nu\left\vert \nabla_{j} f
\right\vert^q\\ &
 +\nu\left(\vert f-\mathbb{E}^{ \{\sim i\},\omega}f\vert^{q} (\frac{ c2^{q-1}\sum_{j=i-1,i+1}\left\vert  \nabla_j{h^i}
\right\vert^q}{q^{q}}-{h^i})\right)\nonumber\\= &\nonumber A\nu\vert f-\mathbb{E}^{ \{\sim i\},\omega}f\vert^{q} +2^{q-1}c\sum_{j=i-1,i+1}\nu\left\vert \nabla_{j} f
\right\vert^q\\ &+
 \label{3.16Pap2}\nu_{\Lambda(i)}\left(\left(\mathbb{E}^{M(i)}\vert f-\mathbb{E}^{ \{\sim i\},\omega}f\vert^{q}\right)\left( \frac{ c2^{q-1}\sum_{j=i-1,i+1}\left\vert  \nabla_j{h^i}
\right\vert^q}{q^{q}}-{h^i}\right)\right)\end{align}
 where the last equality holds due to the fact that $h^i$ is localised in $\Lambda(i)$.
 We can bound the last term on the right hand side of~\eqref{3.16Pap2} with the use
 of the entropic inequality~\eqref{3.4Pap2} and the Log-Sobolev inequality for $\nu_{\Lambda(i)}$
 from (C1), in the same way we worked in Proposition~\ref{prp3.2Pap2}. Then we will get
 \begin{align}\nonumber\nu(f^qlog\frac{f^q}{\mathbb{E}^{ \{\sim i\},\omega}f^q}) \leq &\nonumber A\nu\vert f-\mathbb{E}^{ \{\sim i\},\omega}f\vert^{q} +2c\sum_{j=i-1,i+1}\nu\left\vert \nabla_{j} f
\right\vert^q\\ &+\frac{C}{\epsilon }\nu_{\Lambda(i)}\left\vert \nabla_{\Lambda(i)}\left( \mathbb{E}^{M(i)}
\left\vert f-\mathbb{E}^{i-1}\mathbb{E}^{i+1}f\right\vert^q \right)^{\frac{1}{q}}\right\vert^q
\nonumber\\ &+\frac{1}{\epsilon }\left(log\nu e^{\epsilon(\frac{ c2^{q-1}\sum_{j=i-1,i+1}\left\vert  \nabla_j{h^i}
\right\vert^q}{q^{q}}-{h^i})}\right)\nu\vert f-\mathbb{E}^{ \{\sim i\},\omega}f\vert^{q}\nonumber
\\ \leq&   \label{3.17Pap2}(A+\frac{K}{\epsilon})\nu\vert f-\mathbb{E}^{ \{\sim i\},\omega}f\vert^{q} +\frac{C}{\epsilon }Q(i,i)+2c\sum_{j=i-1,i+1}\nu\left\vert \nabla_{j} f
\right\vert^q
 \end{align}
 where at the last inequality we used  hypothesis (C2) to bound

 $\nu e^{\epsilon(\frac{ c2^{q-1}\sum_{j=i-1,i+1}\left\vert  \nabla_j{h^i}
\right\vert^q}{q^{q}}-{h^i})}$.  We can now use Corollary~\ref{crl3.3Pap2} to bound $Q(i,i)$
in~\eqref{3.17Pap2} as well as Proposition~\ref{prp3.2Pap2} to bound $\nu \vert f-\mathbb{E}^{ \{\sim i\},\omega}f \vert ^{q}$. We will then obtain
\begin{align}\nu(f^qlog\frac{f^q}{\mathbb{E}^{ \{\sim i\},\omega}f^q}) \nonumber\leq&(\frac{DC}{\epsilon }+AM+\frac{MK}{\epsilon})\sum_{ r=3 }^{\infty} J_0^{r-2}\nu\left\vert \nabla_{i\pm r} f
\right\vert^q\\ \nonumber&+(\frac{DC}{\epsilon }+AM+\frac{MK}{\epsilon})\sum_{r=i-2}^{i+2}\nu\left\vert \nabla_{r}
f
\right\vert^q+ 2c\sum_{j=i-1,i+1}\nu\left\vert \nabla_{j} f
\right\vert^q
  \end{align}
     The lemma follows for appropriate choice of the constant $R_{1}$.
\end{proof}
 \section{(LSq) type inequality under  (H2).}
 The proofs in this section follow closely mainly the methods used in \cite{H-Z}, but also in \cite{I-P}. We start with a proposition that shows how the non uniform U-bound and the Log-Sobolev inequality are related.
\begin{proposition}\label{prop1.2p5} Suppose that the measure $$d\mathbb{E}^{\{\sim i\},\omega}=\frac{e^{-H^{\{\sim i\},\omega}}dX_{\{\sim i \}}}{\int e^{-H^{\{\sim i\},\omega}}dX_{\{\sim i \}} }$$ 
satisfies the following non uniform U-bound\begin{equation}\label{Uboundprop1.2p5}\mathbb{E}^{\{\sim i\},\omega} \vert f\vert ^q\left(\vert \nabla_{\{\sim i\}}H^{\{\sim i\},\omega}\vert^q+H^{\{\sim i\},\omega}\right)\leq\hat C\mathbb{E}^{\{\sim i\},\omega} \vert \nabla_{i} f \vert^q+\hat D_{\{\sim i\}}(\omega)\mathbb{E}^{\{\sim i\},\omega}\vert f \vert ^q\end{equation}
for some  positive constant $\hat C$ and a function  $\hat D_{\{\sim i\}}(\omega)$ of $\omega$, both independent of $f$. Then the following defective Log-Sobolev inequality holds$$\mathbb{E}^{\{\sim i\},\omega} \left\vert f\right\vert^q log\frac{\left\vert f\right\vert^q}{\mathbb{E}^{\{\sim i\},\omega} \left\vert f\right\vert^q}\leq C \ \mathbb{E}^{\{\sim i\},\omega} \left\vert \nabla_{\{\sim i\}} f
\right\vert^q+C\mathbb{E}^{\{\sim i\},\omega}\vert f\vert^q+D_i(\omega)\mathbb{E}^{\{\sim i\},\omega}\vert f\vert ^q$$
where $C$ is a constant and $D_i(\omega)=\max\{\frac{2^{q-1}(q+\epsilon)\alpha}{\epsilon q^{q}},1\}\hat D_{\{\sim i\}}(\omega)$.
\end{proposition}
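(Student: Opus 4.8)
The plan is to follow the transference method of [H-Z]: carry a reference $q$-Sobolev type inequality from the underlying Carnot group over to the weighted measure $\mathbb{E}^{\{\sim i\},\omega}$ via the substitution $g=fe^{-H^{\{\sim i\},\omega}/q}$, and then use the non-uniform U-bound \eqref{Uboundprop1.2p5} to absorb the potential term that this produces. Throughout write $U:=H^{\{\sim i\},\omega}\ge 0$ (recall $\phi,V\ge 0$) and note that $\mathbb{E}^{\{\sim i\},\omega}$ is a probability measure on $\mathbb{H}\times\mathbb{H}$, the product over the two sites $i-1,i+1$, which is itself a Carnot group (of homogeneous dimension $8$).

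First I would record the reference inequality. The Haar measure $dX_{\{\sim i\}}$ on $\mathbb{H}^2$ satisfies a defective $q$-log-Sobolev inequality: for every $\tau>0$ there is $\beta(\tau)\in(0,\infty)$ with
\[
\int |g|^q\log\frac{|g|^q}{\int|g|^q\,dX_{\{\sim i\}}}\,dX_{\{\sim i\}}\ \le\ \tau\int|\nabla_{\{\sim i\}}g|^q\,dX_{\{\sim i\}}+\beta(\tau)\int|g|^q\,dX_{\{\sim i\}} .
\]
This follows from the sub-elliptic (Folland--Stein) Sobolev inequality on the Heisenberg group --- i.e. from the heat-kernel and volume bounds used in [V-SC-C] --- by Jensen's inequality and linearisation of the logarithm; the two-factor version is obtained from the one-factor one by tensorisation (the $q$-Sobolev embedding $\|g\|_{8q/(8-q)}^{q}\le A\|\nabla_{\{\sim i\}}g\|_q^q$ on $\mathbb{H}^2$ would do equally well).

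Next I would substitute $g=fe^{-U/q}$, $f\ge 0$. Using $\int|g|^q\,dX_{\{\sim i\}}=Z^{\{\sim i\},\omega}\mathbb{E}^{\{\sim i\},\omega}|f|^q$ and $\int|g|^q\log|g|^q\,dX_{\{\sim i\}}=Z^{\{\sim i\},\omega}\big(\mathbb{E}^{\{\sim i\},\omega}(|f|^q\log|f|^q)-\mathbb{E}^{\{\sim i\},\omega}(|f|^qU)\big)$, the left-hand side of the reference inequality becomes, after division by $Z^{\{\sim i\},\omega}$,
\[
\mathbb{E}^{\{\sim i\},\omega}\Big(|f|^q\log\tfrac{|f|^q}{\mathbb{E}^{\{\sim i\},\omega}|f|^q}\Big)-\mathbb{E}^{\{\sim i\},\omega}(|f|^qU) .
\]
On the right-hand side, the Leibniz rule $\nabla_{\{\sim i\}}g=\big(\nabla_{\{\sim i\}}f-\tfrac1q f\,\nabla_{\{\sim i\}}U\big)e^{-U/q}$ together with Young's inequality with a free parameter $\epsilon$ (the factor $\tfrac1{q^q}$ coming from the $1/q$ above) yields, after division by $Z^{\{\sim i\},\omega}$, a coefficient of the form $\tfrac{2^{q-1}(q+\epsilon)\alpha}{\epsilon q^q}$ on $\mathbb{E}^{\{\sim i\},\omega}(|f|^q|\nabla_{\{\sim i\}}U|^q)$, where $\alpha$ absorbs the constant of the reference inequality. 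Moving $\mathbb{E}^{\{\sim i\},\omega}(|f|^qU)$ to the right and bounding $\tfrac{2^{q-1}(q+\epsilon)\alpha}{\epsilon q^q}|\nabla_{\{\sim i\}}U|^q+U\le \max\{\tfrac{2^{q-1}(q+\epsilon)\alpha}{\epsilon q^q},1\}\big(|\nabla_{\{\sim i\}}U|^q+U\big)$, one arrives at
\[
\mathbb{E}^{\{\sim i\},\omega}\Big(|f|^q\log\tfrac{|f|^q}{\mathbb{E}^{\{\sim i\},\omega}|f|^q}\Big)\le C_1\mathbb{E}^{\{\sim i\},\omega}|\nabla_{\{\sim i\}}f|^q+C_2\,\mathbb{E}^{\{\sim i\},\omega}\!\big(|f|^q(|\nabla_{\{\sim i\}}U|^q+U)\big)+C_3\mathbb{E}^{\{\sim i\},\omega}|f|^q ,
\]
with $C_2=\max\{\tfrac{2^{q-1}(q+\epsilon)\alpha}{\epsilon q^q},1\}$ and $C_3=\beta(\tau)+\log Z^{\{\sim i\},\omega}$, the latter bounded above uniformly in $\omega$ because $0\le e^{-U}\le e^{-\phi(x_{i-1})-\phi(x_{i+1})}$. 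Applying now the non-uniform U-bound \eqref{Uboundprop1.2p5} to the middle term and collecting, the claimed inequality holds with $C:=\max\{C_1+C_2\hat C,\,C_3\}$ and $D_i(\omega):=C_2\hat D_{\{\sim i\}}(\omega)$; general $f$ is handled by passing to $|f|$.

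The main obstacle is the reference inequality together with the legitimacy of the substitution: one must have the defective $q$-log-Sobolev (or $q$-Sobolev) inequality on the sub-Riemannian product $\mathbb{H}^2$ with respect to Haar measure --- this is where the genuinely sub-elliptic input enters --- and one must know that $\mathbb{E}^{\{\sim i\},\omega}(|f|^qU)$ and $\mathbb{E}^{\{\sim i\},\omega}(|f|^q|\nabla_{\{\sim i\}}U|^q)$ are finite, which is exactly the standing assumption that the right-hand side of \eqref{Uboundprop1.2p5} be well defined, combined with the local boundedness of $H^{\{\sim i\},\omega}$ guaranteed by (H$^*$). The remaining manipulations are routine constant bookkeeping.
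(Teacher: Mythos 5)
Your proposal is correct and follows essentially the same route as the paper: transfer from the Haar measure to $\mathbb{E}^{\{\sim i\},\omega}$ via the substitution $g=f e^{-H^{\{\sim i\},\omega}/q}$, use the Jensen/classical Sobolev step (your ``reference'' defective $q$-log-Sobolev inequality is exactly what the paper derives explicitly from the (C-S) inequality of [V-SC-C] plus $\log x\le x$), apply the Leibniz rule and Young's inequality to produce the coefficient $\frac{2^{q-1}(q+\epsilon)\alpha}{\epsilon q^{q}}$ on $\mathbb{E}^{\{\sim i\},\omega}\left(|f|^q|\nabla_{\{\sim i\}}H^{\{\sim i\},\omega}|^q\right)$, take the maximum to merge it with the $H^{\{\sim i\},\omega}$ term, and absorb via the non-uniform U-bound. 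The only differences are presentational (you keep $f$ unnormalised and track $\log Z^{\{\sim i\},\omega}$, bounded via $\phi\ge 0$ and integrability of $e^{-\phi}$, where the paper normalises $\mathbb{E}^{\{\sim i\},\omega}|f|^q=1$), and these do not affect correctness.
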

\begin{proof}
Without loss of generality we can assume that $f\geq 0$. We set $\rho_{i}=\frac{e^{-H^{\{\sim i\},\omega}}}{\int e^{-H^{\{\sim i\},\omega}}dX_{\{\sim i \}} }$ and $g=f\rho_{i} ^\frac{1}{q}$ We also assume that $$\int g^qdX_{\{\sim i \}}=\mathbb{E}^{\{\sim i\},\omega} f^q=1$$
Then we can write 
$$\int (g^q\log g^q)dX_{\{\sim i \}}=\frac{q}{\epsilon}\int g^q(\log g^\epsilon)dX_{\{\sim i \}}\leq \frac{q+\epsilon}{q}\frac{q}{\epsilon}\log\left(\int g^{q+\epsilon}dX_{\{\sim i \}}\right)^\frac{q}{q+\epsilon}$$
where above we used the Jensen's inequality. In order to bound the last expression we can use the   Classical Sobolev (C-S) inequality for the  Lebesgue measure $dX_{\{\sim i \}}$  (see \cite{V-SC-C}),
$$ \left(\int \vert f\vert ^{q+\epsilon} dX_{\{\sim i \}}\right)^\frac{q}{q+\epsilon}\leq\alpha \int\vert\nabla f  \vert^q dX_{\{\sim i \}} +\beta\int \vert f \vert ^q dX_{\{\sim i \}}  \   \   \   \   \   \   \   \   \   \       \text{(C-S)}$$ for positive constants $\alpha, \beta$. We will then obtain\begin{align}\nonumber\int (g^q\log g^q)dX_{\{\sim i \}}&\leq\frac{q+\epsilon}{\epsilon}\log\left(\alpha \int\vert\nabla_{\{\sim i\}} g \vert^qd X_{\{\sim i \}}+\beta\int \vert g \vert ^qdX_{\{\sim i \}}\right) \\ &\label{eq3prop1.2p5}\leq\frac{(q+\epsilon)\alpha}{\epsilon} \int\vert\nabla_{\{\sim i\}} g \vert^qd X_{\{\sim i \}}+\frac{(q+\epsilon)\beta}{\epsilon}\int \vert g\vert^q dX_{\{\sim i \}}\end{align}
where in the last inequality we used that $\log x\leq x$ for $x>0$.  For the first term on the right hand side of (\ref{eq3prop1.2p5}) we have
\begin{align}\nonumber\label{eq4prop1.2p5}\int \vert \nabla_{\{\sim i\}} g \vert^qdX_{\{\sim i \}}&=\int \vert \nabla_{\{\sim i\}} (f\rho^\frac{1}{q})  \vert^qdX_{\{\sim i \}}\\  &\leq2^{q-1}\mathbb{E}^{i,\omega} \vert \nabla_{\{\sim i\}} f  \vert^q+2^{q-1}\int \vert f\nabla_{\{\sim i\}} (\rho^\frac{1}{q})  \vert^qdX_{\{\sim i \}}\end{align}
We have \begin{align*}\int \vert f\nabla_{\{\sim i\}} (\rho^\frac{1}{q})  \vert^qdX_{\{\sim i \}}&=\int \vert \rho^\frac{1}{q}\rho^\frac{-1}{q}f\nabla_{\{\sim i\}} (\rho^\frac{1}{q})  \vert^qdX_{\{\sim i \}}=\mathbb{E}^{\{\sim i\},\omega} f^{q} \vert\rho^\frac{-1}{q} \nabla_{\{\sim i\}} (\rho^\frac{1}{q})  \vert^q\\ &=\frac{1}{q^{q}}\mathbb{E}^{\{\sim i\},\omega} f^{q} \vert\nabla_{\{\sim i\}} H^{\{\sim i\},\omega}  \vert^q\end{align*}
If we plug the last equality in (\ref{eq4prop1.2p5}), we  obtain 
\begin{equation}\label{eq5prop1.2p5} \int \vert \nabla_{\{\sim i\}} g \vert^qdX_{\{\sim i \}}\leq2^{q-1}\mathbb{E}^{\{\sim i\},\omega} \vert \nabla_{\{\sim i\}} f  \vert^q+\frac{2^{q-1}}{q^{q}}\mathbb{E}^{\{\sim i\},\omega}f^{q} \vert\nabla_{\{\sim i\}} H^{\{\sim i\},\omega}\vert^q
\end{equation}
If we combine inequalities  (\ref{eq3prop1.2p5}) and (\ref{eq5prop1.2p5}),  we get
\begin{align}\nonumber\int (g^q\log g^q)dX_{\{\sim i \}}\leq&\frac{2^{q-1}(q+\epsilon)\alpha}{\epsilon} \mathbb{E}^{\{\sim i\},\omega} \vert \nabla_{\{\sim i\}} f  \vert^q+\frac{(q+\epsilon)\beta}{\epsilon}\mathbb{E}^{\{\sim i\},\omega} f^q\\ \label{eq6prop1.2p5}&+\frac{2^{q-1}(q+\epsilon)\alpha}{\epsilon q^{q}}\mathbb{E}^{\{\sim i\},\omega} f^{q} \vert\nabla_{\{\sim i\}} H^{\{\sim i\},\omega}  \vert^q\end{align}
For the left hand side of (\ref{eq6prop1.2p5}), since $H^{\{\sim i\},\omega}\geq 0$,  we have 
\begin{align}\int (g^q\log g^q)dX_{\{\sim i \}}\nonumber=&\int \left(\frac{e^{-H^{\{\sim i\},\omega}}}{\int e^{-H^{\{\sim i\},\omega}}dX_{\{\sim i \}} }f^q\log \frac{e^{-H^{\{\sim i\},\omega}}}{\int e^{-H^{\{\sim i\},\omega}}dX_{\{\sim i \}} }f^q\right)dX_{\{\sim i \}}\\=&\mathbb{E}^{\{\sim i\},\omega} (f^q\log f^q)+\nonumber\mathbb{E}^{\{\sim i\},\omega} \left(f^q\log \frac{e^{-H^{\{\sim i\},\omega}}}{\int e^{-H^{\{\sim i\},\omega}}dX_{\{\sim i \}} }\right)
\\=&\nonumber\label{eq7prop1.2p5}\mathbb{E}^{\{\sim i\},\omega} (f^q\log f^q)-\mathbb{E}^{\{\sim i\},\omega} (f^qH^{\{\sim i\},\omega})\\& \nonumber-\mathbb{E}^{\{\sim i\},\omega} \left(f^q\log \int e^{-H^{\{\sim i\},\omega}}dX_{\{\sim i \}}\right)\\\geq&\mathbb{E}^{\{\sim i\},\omega} (f^q\log f^q)-\mathbb{E}^{\{\sim i\},\omega} (f^q H^{\{\sim i\},\omega})\end{align}
  If we combine (\ref{eq6prop1.2p5}) and (\ref{eq7prop1.2p5}), we obtain
\begin{align}\label{eq1.8prop1.2p5}\mathbb{E}^{\{\sim i\},\omega} (f^q\log f^q)\leq&\hat\alpha\mathbb{E}^{\{\sim i\},\omega} \vert \nabla_{\{\sim i\}} f  \vert^q+\hat \gamma\mathbb{E}^{\{\sim i\},\omega} f^q\nonumber\\ &+\hat \beta\mathbb{E}^{\{\sim i\},\omega} f^{q} \left(\vert\nabla_{\{\sim i\}} H^{\{\sim i\},\omega}  \vert^q+H^{\{\sim i\},\omega}\right)\end{align}
where $\hat \alpha=\frac{2^{q-1}(q+\epsilon)\alpha}{\epsilon}$, $\hat \beta=\max\{\frac{2^{q-1}(q+\epsilon)\alpha}{\epsilon q^{q}},1\}$ and $\hat \gamma =\frac{(q+\epsilon)\beta}{\epsilon}$. If we use the non uniform U-bound  (\ref{Uboundprop1.2p5}),  the inequality (\ref{eq1.8prop1.2p5}) gives
$$\mathbb{E}^{\{\sim i\},\omega} (f^q\log f^q)\leq(\hat\alpha+\hat \beta\hat C3)\mathbb{E}^{\{\sim i\},\omega} \vert \nabla_{\{\sim i\}} f  \vert^q+\hat \beta\hat D_{\{\sim i\}}(\omega)\mathbb{E}^{\{\sim i\},\omega} f^q+\hat \gamma\mathbb{E}^{\{\sim i\},\omega} f^q$$
If we replace $f$ with $\frac{f}{\mathbb{E}^{\{\sim i\},\omega}f }$ which has mean equal to one we obtain the result.
\end{proof} If we use Proposition \ref{prop1.2p5}  to bound the entropy term on the right hand side  of inequality (\ref{Rothaus}) we obtain the following corollary.
 
\begin{corollary}\label{col1.3p5} If condition (H2) is satisfied then the following inequality is true.
\begin{align}\nonumber\mathbb{E}^{ \{\sim i\},\omega}\left(\left\vert f\right\vert^qlog\frac{\left\vert f\right\vert^q}{\mathbb{E}^{ \{\sim i\},\omega}\left\vert f\right\vert^q}\right) \leq &C \ \mathbb{E}^{\{\sim i\},\omega} \left\vert \nabla_{\{\sim i\}} f
\right\vert^q+(A+C)\mathbb{E}^{ \{\sim i\},\omega}\left\vert f-\mathbb{E}^{ \{\sim i\},\omega}f \right\vert^{q} \\ & \label{1.9col1.3P5} +D_i(\omega)\mathbb{E}^{\{\sim i\},\omega}\vert f-\mathbb{E}^{ \{\sim i\},\omega}f\vert ^q\end{align}
\end{corollary}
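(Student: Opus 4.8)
The plan is to obtain this as a direct consequence of the defective Log-Sobolev inequality of Proposition~\ref{prop1.2p5} combined with the Rothaus-type entropy estimate~\eqref{Rothaus}; this is the standard way of passing from a "defective" logarithmic Sobolev inequality to one in which the defect is controlled only by a variance term. Since (H0$''$) is exactly the non-uniform U-bound~\eqref{Uboundprop1.2p5}, Proposition~\ref{prop1.2p5} applies directly to the measure $\mathbb{E}^{\{\sim i\},\omega}$.

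First I would apply~\eqref{Rothaus} to $f$ and the measure $\mathbb{E}^{\{\sim i\},\omega}$, writing $g:=f-\mathbb{E}^{\{\sim i\},\omega}f$, which gives
\[
\mathbb{E}^{ \{\sim i\},\omega}\!\left(|f|^q\log\frac{|f|^q}{\mathbb{E}^{ \{\sim i\},\omega}|f|^q}\right)\leq A\,\mathbb{E}^{ \{\sim i\},\omega}|g|^{q}+\mathbb{E}^{ \{\sim i\},\omega}\!\left(|g|^q\log\frac{|g|^q}{\mathbb{E}^{ \{\sim i\},\omega}|g|^q}\right).
\]
The first term on the right is already of the desired form, so it remains to control the entropy of $|g|^q$. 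For this I would apply Proposition~\ref{prop1.2p5} with $g$ in place of $f$, obtaining
\[
\mathbb{E}^{\{\sim i\},\omega}\!\left(|g|^q\log\frac{|g|^q}{\mathbb{E}^{\{\sim i\},\omega}|g|^q}\right)\leq C\,\mathbb{E}^{\{\sim i\},\omega}|\nabla_{\{\sim i\}} g|^q+C\,\mathbb{E}^{\{\sim i\},\omega}|g|^q+D_i(\omega)\,\mathbb{E}^{\{\sim i\},\omega}|g|^q.
\]
Here I would use that $\mathbb{E}^{\{\sim i\},\omega}f$ is obtained by integrating out the variables $x_{i-1},x_{i+1}$ and hence does not depend on them, so $\nabla_{\{\sim i\}}\!\big(\mathbb{E}^{\{\sim i\},\omega}f\big)=0$ and consequently $\nabla_{\{\sim i\}}g=\nabla_{\{\sim i\}}f$.

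Finally, substituting the second display into the first and collecting the two contributions proportional to $\mathbb{E}^{\{\sim i\},\omega}|g|^q$ (namely the $A$ coming from Rothaus and the $C$ coming from the defective inequality for $g$) yields
\[
\mathbb{E}^{ \{\sim i\},\omega}\!\left(|f|^q\log\frac{|f|^q}{\mathbb{E}^{ \{\sim i\},\omega}|f|^q}\right)\leq C\,\mathbb{E}^{\{\sim i\},\omega}|\nabla_{\{\sim i\}} f|^q+(A+C)\,\mathbb{E}^{ \{\sim i\},\omega}\big|f-\mathbb{E}^{ \{\sim i\},\omega}f\big|^{q}+D_i(\omega)\,\mathbb{E}^{\{\sim i\},\omega}\big|f-\mathbb{E}^{ \{\sim i\},\omega}f\big|^q,
\]
which is precisely~\eqref{1.9col1.3P5}. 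I do not expect any genuine obstacle: the only points that need care are checking that $g$ has enough regularity for both~\eqref{Rothaus} and Proposition~\ref{prop1.2p5} to be applicable and keeping track of the constants; the substantive step — absorbing the non-uniform U-bound into an entropy estimate — has already been done in Proposition~\ref{prop1.2p5}.
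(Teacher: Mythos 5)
Your proposal is correct and follows essentially the same route as the paper: apply the Rothaus-type estimate~\eqref{Rothaus} and then bound the entropy of $f-\mathbb{E}^{\{\sim i\},\omega}f$ via the defective Log-Sobolev inequality of Proposition~\ref{prop1.2p5}, using that $\nabla_{\{\sim i\}}\bigl(f-\mathbb{E}^{\{\sim i\},\omega}f\bigr)=\nabla_{\{\sim i\}}f$. The only difference is that you make this last gradient identity explicit, which the paper leaves implicit.
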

Bellow we prove the Log-Sobolev type inequality (\ref{2.1Pap2}).

\begin{proposition}\label{lem1.7P5} Assume  hypothesis   (H2). If the conditions  (C1)-(C3) are satisfied, then the following
Log-Sobolev type inequality holds  
 $$\frac{1}{R_{2}}\nu(\left\vert f\right\vert^qlog\frac{\left\vert f\right\vert^q}{\mathbb{E}^{ \{\sim i\},\omega}\left\vert f\right\vert^q}) \leq \sum_{r=i-2}^{i+2}\nu\left\vert \nabla_{r}
f
\right\vert^q+\sum_{ r=3 }^{\infty} J_0^{r-2}\nu\left\vert \nabla_{i\pm r} f
\right\vert^q$$for some positive constant $R_{2}$ independent of $f$ and $i$.
\end{proposition}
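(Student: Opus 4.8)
The strategy is to apply the Gibbs measure $\nu$ to the defective Log-Sobolev inequality for $\mathbb{E}^{\{\sim i\},\omega}$ established in Corollary~\ref{col1.3p5}, and then dispose of the three resulting terms one at a time using the estimates of Section~3. Applying $\nu$ to Corollary~\ref{col1.3p5} gives
\begin{align*}
\nu\left(|f|^q\log\frac{|f|^q}{\mathbb{E}^{\{\sim i\}}|f|^q}\right)\leq\; & C\,\nu\left|\nabla_{\{\sim i\}}f\right|^q+(A+C)\,\nu\left|f-\mathbb{E}^{\{\sim i\}}f\right|^q\\
&+\nu\left(D_i(\omega)\,\mathbb{E}^{\{\sim i\}}\left|f-\mathbb{E}^{\{\sim i\}}f\right|^q\right).
\end{align*}
The first term is $\nu|\nabla_{i-1}f|^q+\nu|\nabla_{i+1}f|^q$, already of the desired form, while the second term is controlled directly by the $q$-spectral-gap-type inequality of Proposition~\ref{prp3.2Pap2}.

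The only serious term is the last one. Write $h_i:=f-\mathbb{E}^{\{\sim i\}}f$. The coefficient $D_i(\omega)=\max\{\frac{2^{q-1}(q+\epsilon)\alpha}{\epsilon q^{q}},1\}\,\hat D_{\{\sim i\}}(\omega)$ is a function of $x_{i-2},x_i,x_{i+2}$ only; in particular it does not involve the variables $x_{i-1},x_{i+1}$ integrated by $\mathbb{E}^{\{\sim i\}}$, and it is $\Sigma_{\Lambda(i)}$-measurable. Hence we may pull it out of the inner conditional expectation and then condition on $\Sigma_{\Lambda(i)}$,
$$\nu\left(D_i\,\mathbb{E}^{\{\sim i\}}|h_i|^q\right)=\nu\left(D_i\,|h_i|^q\right)=\nu_{\Lambda(i)}\left(D_i\,\mathbb{E}^{M(i)}|h_i|^q\right).$$
Applying the Deuschel-Stroock entropic inequality~\eqref{3.4Pap2} to the measure $\nu_{\Lambda(i)}$ with $u=D_i$ and $v=\mathbb{E}^{M(i)}|h_i|^q/(\nu_{\Lambda(i)}\mathbb{E}^{M(i)}|h_i|^q)$ bounds this by
$$\frac{1}{\epsilon}\left(\log\nu e^{\epsilon D_i}\right)\nu|h_i|^q+\frac{1}{\epsilon}\,\nu_{\Lambda(i)}\left(\mathbb{E}^{M(i)}|h_i|^q\,\log\frac{\mathbb{E}^{M(i)}|h_i|^q}{\nu_{\Lambda(i)}\mathbb{E}^{M(i)}|h_i|^q}\right).$$

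By hypothesis $(H0'')$ the moment $\nu e^{\epsilon D_i}$ is finite — it suffices to take the $\epsilon$ of the entropic inequality small enough that $\epsilon\max\{\frac{2^{q-1}(q+\epsilon)\alpha}{\epsilon q^{q}},1\}$ does not exceed the $\epsilon$ of $(H0'')$ — so the first summand is a constant times $\nu|h_i|^q$, once again controlled by Proposition~\ref{prp3.2Pap2}. The second summand is exactly the entropy of the $\Sigma_{\Lambda(i)}$-measurable function $\mathbb{E}^{M(i)}|h_i|^q$ with respect to $\nu_{\Lambda(i)}$; applying the Log-Sobolev-$q$ inequality for $\nu_{\Lambda(i)}$ from hypothesis $(H1)$ to $(\mathbb{E}^{M(i)}|h_i|^q)^{1/q}$ bounds it by $C\,\nu_{\Lambda(i)}|\nabla_{\Lambda(i)}(\mathbb{E}^{M(i)}|h_i|^q)^{1/q}|^q=C\,Q(i,i)$, which in turn is bounded by Corollary~\ref{crl3.3Pap2} in terms of $\sum_{r=i-2}^{i+2}\nu|\nabla_r f|^q+\sum_{r\geq3}J_0^{r-2}\nu|\nabla_{i\pm r}f|^q$.

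Collecting everything — substituting the bound of Proposition~\ref{prp3.2Pap2} for each occurrence of $\nu|h_i|^q$, the bound of Corollary~\ref{crl3.3Pap2} for $Q(i,i)$, and using $\nu|\nabla_{i-1}f|^q+\nu|\nabla_{i+1}f|^q\leq\sum_{r=i-2}^{i+2}\nu|\nabla_r f|^q$ — yields the asserted inequality with $R_2$ a suitable constant multiple of $C+(A+C+K/\epsilon)M+CD/\epsilon$. The main obstacle, and the only place the argument departs from the $(H0')$ case of Proposition~\ref{prp3.4Pap2}, is this treatment of the boundary-dependent term $\nu(D_i(\omega)\,\mathbb{E}^{\{\sim i\}}|h_i|^q)$: one must observe that $D_i(\omega)$ can be taken out of $\mathbb{E}^{\{\sim i\}}$, is localized in $\Lambda(i)$, and carries the exponential moment supplied by $(H0'')$, so that the same Deuschel-Stroock argument already used for the spectral gap in Proposition~\ref{prp3.2Pap2} goes through verbatim; once that is in place, the rest reduces to Proposition~\ref{prp3.2Pap2} and Corollary~\ref{crl3.3Pap2}.
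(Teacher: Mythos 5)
Your proposal is correct and follows essentially the same route as the paper: apply $\nu$ to Corollary~\ref{col1.3p5}, use the localization of $D_i(\omega)$ in $\Lambda(i)$ to rewrite the boundary-dependent term as $\nu_{\Lambda(i)}\bigl(D_i\,\mathbb{E}^{M(i)}|f-\mathbb{E}^{\{\sim i\}}f|^q\bigr)$, control it via the Deuschel--Stroock entropic inequality~\eqref{3.4Pap2} together with the exponential moment from (H0$''$) and the (H1) Log-Sobolev inequality for $\nu_{\Lambda(i)}$ (yielding $Q(i,i)$), and finish with Proposition~\ref{prp3.2Pap2} and Corollary~\ref{crl3.3Pap2}. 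Your explicit remarks on commuting $D_i$ past $\mathbb{E}^{\{\sim i\}}$ and on adjusting $\epsilon$ for the factor $\max\{\tfrac{2^{q-1}(q+\epsilon)\alpha}{\epsilon q^{q}},1\}$ are minor clarifications of steps the paper leaves implicit.
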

\begin{proof} If we apply the Gibbs measure at the Log-Sobolev type inequality (\ref{1.9col1.3P5}), from Corollary \ref{col1.3p5} we have
\begin{align}\nonumber\nu(\left\vert f\right\vert^qlog\frac{\left\vert f\right\vert^q}{\mathbb{E}^{ \{\sim i\},\omega}\left\vert f\right\vert^q}) \leq &C \ \nu\left\vert \nabla_{\{\sim i\}} f
\right\vert^q+(A+C)\nu\left\vert f-\mathbb{E}^{ \{\sim i\},\omega}f \right\vert^{q} \\ &  \nonumber+\nu D_i(\omega)\vert f-\mathbb{E}^{ \{\sim i\},\omega}f\vert ^q\\  = & \nonumber C\nu\left\vert \nabla_{\{\sim i\}} f
\right\vert^q+(A+C)\nu\left\vert f-\mathbb{E}^{ \{\sim i\},\omega}f \right\vert^{q} \\ &  \label{1.22lem1.7P5} +\nu_{\Lambda(i)}D_i(\omega)\mathbb{E}^{M(i)}\vert f-\mathbb{E}^{ \{\sim i\},\omega}f\vert ^q\end{align}
where  we used that $D_i(\omega)$ is localised in $\Lambda(i)$ and that $M(i)=\mathbb{Z}\smallsetminus\Lambda(i)$. If we use again the entropic inequality (\ref{3.4Pap2}) as we did in Proposition \ref{prp3.2Pap2} to bound the last term on the right hand side of (\ref{1.22lem1.7P5}), we obtain 
\begin{align}\nonumber\nu(\left\vert f\right\vert^qlog\frac{\left\vert f\right\vert^q}{\mathbb{E}^{ \{\sim i\},\omega}\left\vert f\right\vert^q}) \leq & \nonumber C\nu\left\vert \nabla_{\{\sim i\}} f
\right\vert^q+(A+C)\nu\left\vert f-\mathbb{E}^{ \{\sim i\},\omega}f \right\vert^{q} \\ &+ \nonumber \frac{C}{\epsilon  }\nu_{\Lambda(i)}\mathbb{E}^{M(i)}\vert f-\mathbb{E}^{\{\sim i\}}f\vert^{q}log\frac{\mathbb{E}^{M(i)}\vert f-\mathbb{E}^{\{\sim i\}}f\vert^{q}}{\nu_{\Lambda(i)}\mathbb{E}^{M(i)}\vert f-\mathbb{E}^{\{\sim i\}}f\vert^{q}} \\ & +\frac{1}{\epsilon }\left(log\nu_{\Lambda(i)}e^{\epsilon\left(D_i(\omega)\right)}\right)\nu_{\Lambda(i)}\mathbb{E}^{M(i)}\vert f-\mathbb{E}^{\{\sim i\}}f\vert^{q} \nonumber\\ \leq&\nonumber C\nu\left\vert \nabla_{\{\sim i\}} f
\right\vert^q+(A+C+\frac{K}{\epsilon })\nu\left\vert f-\mathbb{E}^{ \{\sim i\},\omega}f \right\vert^{q} \\ & \label{1.23lem1.7p5} +\frac{C}{\epsilon  }Q(i,i) \end{align}
Where to obtain the last inequality we used the Log-Sobolev inequality for the measure $\nu_{\Lambda (i)}$ from hypothesis (C1).  If we use Proposition \ref{prp3.2Pap2} and Corollary  \ref{crl3.3Pap2} to bound the third and fourth term on the right hand side of (\ref{1.23lem1.7p5}), we finally get 

\begin{align*}\nonumber\nu(\left\vert f\right\vert^qlog\frac{\left\vert f\right\vert^q}{\mathbb{E}^{ \{\sim i\},\omega}\left\vert f\right\vert^q}) \leq & C\nu\left\vert \nabla_{\{\sim i\}} f
\right\vert^q+(A+C+\frac{K}{\epsilon })M\sum_{r=i-2}^{i+2}\nu\left\vert \nabla_{r}
f
\right\vert^q\\ &+(A+C+\frac{K}{\epsilon })M\sum_{ r=3 }^{\infty} J_0^{r-2}\nu\left\vert \nabla_{i\pm r} f
\right\vert^q \\ & + \frac{DC}{\epsilon  }\sum_{r=i-2}^{i+2}\nu\left\vert \nabla_{r}
f
\right\vert^q+\frac{DC}{\epsilon  }\sum_{ r=3 }^{\infty} J_0^{r-2}\nu\left\vert \nabla_{i\pm r} f
\right\vert^q \end{align*}
which gives the result for appropriate constant $R_{2}$ and $0<J_0<1$.
\end{proof}
 
\section{Proof of Proposition~\ref{prp2.2Pap2} and Proposition~\ref{prp2.3Pap2}.}\label{section6}
\
 We  first present some useful lemmata.  The first lemma provides a technical result for the correlation. The proof of the lemma can be found in \cite{Pa1}.\begin{lemma}\label{lem4.1Pap1} For any function $v_k$ localised in $\Lambda(k)$,    the following inequality holds  
\begin{align*}
\mathbb{E}^{\{\sim k\}}&(\vert f\vert^q;  v_{k})
 \leq c_{0}\left(\mathbb{E}^{\{\sim k\}}\vert f\vert^q\right)^{\frac{1}{p}}\left( \mathbb{E}^{\{\sim k \}}(\vert f-\mathbb{E}^{\{\sim k\}}f\vert^q\left(\vert v_k\vert^q+\mathbb{E}^{\{\sim k \}}\vert v_k\vert^{q}\right)) \right)^\frac{1}{q}\end{align*} for some constant $c_0$ uniformly on the boundary conditions. \end{lemma}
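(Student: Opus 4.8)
The plan is to estimate the covariance $\mathbb{E}^{\{\sim k\}}(|f|^q;v_k)$ using only bilinearity and constant‑shift invariance of the covariance, together with the elementary pointwise bound
$$\big||a|^q-|b|^q\big|\ \le\ q\,|a-b|\,\big(|a|^{q-1}+|b|^{q-1}\big)\qquad(a,b\in\mathbb{R},\ q\in(1,2]),$$
which follows from the mean value theorem applied to $t\mapsto|t|^q$ (whose derivative has modulus $q|t|^{q-1}$) and from $\max(|a|,|b|)^{q-1}\le|a|^{q-1}+|b|^{q-1}$. Write $m:=\mathbb{E}^{\{\sim k\}}$; this is a probability measure in the integration variables $(x_{k-1},x_{k+1})$, and all the estimates below are pointwise in the remaining variables. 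Since $|mf|^q$ is constant in the $m$‑variables, constant‑shift invariance of the covariance gives $m(|f|^q;v_k)=m\big((|f|^q-|mf|^q)(v_k-mv_k)\big)$, and I would start from here.

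First I would apply the pointwise inequality with $a=f$ and $b=mf$ to obtain
$$\big|m(|f|^q;v_k)\big|\ \le\ q\,m\Big(|f-mf|\,\big(|f|^{q-1}+|mf|^{q-1}\big)\,|v_k-mv_k|\Big),$$
and then split the right‑hand side into the contributions of $|f|^{q-1}$ and of $|mf|^{q-1}$. For the first, using $q-1=q/p$ write $|f|^{q-1}=(|f|^q)^{1/p}$ and $|f-mf|\,|v_k-mv_k|=\big(|f-mf|^q|v_k-mv_k|^q\big)^{1/q}$, so Hölder's inequality with exponents $p$ and $q$ bounds it by $(m|f|^q)^{1/p}\big(m(|f-mf|^q|v_k-mv_k|^q)\big)^{1/q}$. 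For the second, bound $|mf|^{q-1}\le(m|f|^q)^{1/p}$ by Jensen and $m(|f-mf|\,|v_k-mv_k|)\le\big(m(|f-mf|^q|v_k-mv_k|^q)\big)^{1/q}$ via $L^1(m)\subset L^q(m)$. Adding the two pieces,
$$\big|m(|f|^q;v_k)\big|\ \le\ 2q\,(m|f|^q)^{1/p}\,\big(m(|f-mf|^q|v_k-mv_k|^q)\big)^{1/q}.$$

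The last step is to bound $|v_k-mv_k|^q\le 2^{q-1}\big(|v_k|^q+|mv_k|^q\big)\le 2^{q-1}\big(|v_k|^q+m|v_k|^q\big)$ by convexity of $t\mapsto|t|^q$ and Jensen, which yields the stated inequality with $c_0=2q\,2^{(q-1)/q}$ (so $c_0<8$ for $q\in(1,2]$). I do not expect a genuine obstacle: every step uses only that $m$ is a probability measure, so $c_0$ depends on $q$ alone and is automatically uniform in the boundary condition $\omega$ and in $k$, as required; the hypothesis that $v_k$ be localised in $\Lambda(k)$ plays no role in the estimate itself and is retained only for the later use of the lemma. The one point that needs a little care is the exponent bookkeeping — matching $|f|^{q-1}$ with $(m|f|^q)^{1/p}$ through $q-1=q/p$ — and choosing the constant subtracted from $|f|^q$ to be precisely $|mf|^q$, so that the pointwise inequality produces the factor $|f-mf|$ rather than $|f|$.
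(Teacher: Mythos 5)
Your proof is correct: recentring the covariance at the constant $|mf|^q$, the pointwise bound $\bigl||a|^q-|b|^q\bigr|\le q|a-b|\bigl(|a|^{q-1}+|b|^{q-1}\bigr)$, and Hölder with exponents $p,q$ (using $q-1=q/p$ and Jensen for the $|mf|^{q-1}$ term) give exactly the stated estimate with an explicit constant $c_0=2q\,2^{(q-1)/q}$ depending only on $q$, hence uniform in $\omega$ and $k$. The paper itself only cites [Pa] for this lemma, and your argument is the standard covariance–Hölder route that proof follows, so there is nothing further to reconcile.
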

The
 next lemma presents an estimate  involving $Q(k,k)$.
 \begin{lemma} \label{lem4.2Pap1} Suppose that  hypothesis  (C1) is satisfied. Then
  \begin{align*}\nu\left(\mathbb{E}^{\{\sim k\}}  \vert f\vert^q\right)^{-\frac{q}{p}}\left\vert\mathbb{E}^{\{\sim k\}}(\vert f\vert^q;v_{k})\right\vert^q\leq&\frac{c^{q}_{0}C}{\epsilon  } Q(k,k)\\ &+\frac{c^{q}_{0}}{ \epsilon}\left( log\nu e^{\epsilon\left(\vert v_{k}\vert^{q}+\mathbb{E}^{\{\sim k\}}\vert v_k\vert^{q}\right)} \right)\nu\vert f-\mathbb{E}^{\{\sim k\}}f\vert^q\end{align*}
 for any function $v_{k}$ localised in $\Lambda(k)$.\end{lemma}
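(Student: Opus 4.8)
The plan is to integrate the correlation estimate of Lemma~\ref{lem4.1Pap1} against the Gibbs measure $\nu$ and then reproduce, essentially verbatim, the entropic/log-Sobolev argument already used in the proof of Proposition~\ref{prp3.2Pap2}. Throughout, write $w_k:=\vert v_k\vert^q+\mathbb{E}^{\{\sim k\}}\vert v_k\vert^{q}$; note that $w_k$ is localised in $\Lambda(k)$ since $v_k$ is, and that its value is unchanged if $v_k$ is replaced by $-v_k$.

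First I would apply Lemma~\ref{lem4.1Pap1} to $v_k$ and to $-v_k$ (the right-hand side being the same in both cases), raise the resulting inequality to the $q$-th power and divide by $\left(\mathbb{E}^{\{\sim k\}}\vert f\vert^q\right)^{q/p}$, obtaining the pointwise bound
$$\left(\mathbb{E}^{\{\sim k\}}\vert f\vert^q\right)^{-\frac{q}{p}}\left\vert\mathbb{E}^{\{\sim k\}}(\vert f\vert^q;v_{k})\right\vert^q\leq c_0^q\,\mathbb{E}^{\{\sim k\}}\!\left(\vert f-\mathbb{E}^{\{\sim k\}}f\vert^q\,w_k\right).$$
Integrating with respect to $\nu$ and invoking the DLR identity $\nu\mathbb{E}^{\{\sim k\}}=\nu$ turns the right-hand side into $c_0^q\,\nu\!\left(\vert f-\mathbb{E}^{\{\sim k\}}f\vert^q\,w_k\right)$, so the task reduces to estimating this last quantity.

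Since $w_k$ is $\Sigma_{\Lambda(k)}$-measurable and $M(k)=\mathbb{Z}\smallsetminus\Lambda(k)$, one has
$$\nu\!\left(\vert f-\mathbb{E}^{\{\sim k\}}f\vert^q\,w_k\right)=\nu_{\Lambda(k)}\!\left(w_k\,\mathbb{E}^{M(k)}\vert f-\mathbb{E}^{\{\sim k\}}f\vert^q\right).$$
Now I would apply the Deuschel--Stroock entropic inequality~\eqref{3.4Pap2} with $\mu=\nu_{\Lambda(k)}$, $t=\epsilon$, $u=w_k$, and $v=\mathbb{E}^{M(k)}\vert f-\mathbb{E}^{\{\sim k\}}f\vert^q\big/\nu\vert f-\mathbb{E}^{\{\sim k\}}f\vert^q$, which has $\nu_{\Lambda(k)}$-mean one. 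This gives
\begin{align*}
\nu_{\Lambda(k)}\!\left(w_k\,\mathbb{E}^{M(k)}\vert f-\mathbb{E}^{\{\sim k\}}f\vert^q\right)\leq &\,\frac{1}{\epsilon}\bigl(\log\nu_{\Lambda(k)}e^{\epsilon w_k}\bigr)\,\nu\vert f-\mathbb{E}^{\{\sim k\}}f\vert^q\\
&+\frac{1}{\epsilon}\,\nu_{\Lambda(k)}\!\left(\mathbb{E}^{M(k)}\vert f-\mathbb{E}^{\{\sim k\}}f\vert^q\log\frac{\mathbb{E}^{M(k)}\vert f-\mathbb{E}^{\{\sim k\}}f\vert^q}{\nu_{\Lambda(k)}\mathbb{E}^{M(k)}\vert f-\mathbb{E}^{\{\sim k\}}f\vert^q}\right).
\end{align*}
In the first term, $w_k$ being $\Sigma_{\Lambda(k)}$-measurable gives $\nu_{\Lambda(k)}e^{\epsilon w_k}=\nu e^{\epsilon w_k}$, which reconstructs the factor $\log\nu e^{\epsilon\left(\vert v_k\vert^{q}+\mathbb{E}^{\{\sim k\}}\vert v_k\vert^{q}\right)}$. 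For the second term — the relative entropy of $\mathbb{E}^{M(k)}\vert f-\mathbb{E}^{\{\sim k\}}f\vert^q$ with respect to $\nu_{\Lambda(k)}$ — I would apply the log-Sobolev inequality for $\nu_{\Lambda(k)}$ provided by hypothesis (H1) to the function $\bigl(\mathbb{E}^{M(k)}\vert f-\mathbb{E}^{\{\sim k\}}f\vert^q\bigr)^{1/q}$; by the very definition of $Q(k,k)$ (taken with $h_k=f-\mathbb{E}^{\{\sim k\}}f$) the resulting Dirichlet form is exactly $C\,Q(k,k)$. Multiplying the whole chain by $c_0^q$ then yields the asserted inequality.

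The argument is routine once Lemma~\ref{lem4.1Pap1} is granted; the only point requiring genuine care is the bookkeeping in the application of~\eqref{3.4Pap2} — choosing $v$ so that it has $\nu_{\Lambda(k)}$-mean one, exchanging $\nu_{\Lambda(k)}$ and $\nu$ on $e^{\epsilon w_k}$ via localisation, and recognising that the resulting entropy term collapses to $Q(k,k)$ after invoking (H1). These are precisely the manipulations already carried out in the proof of Proposition~\ref{prp3.2Pap2}, so no new obstacle arises.
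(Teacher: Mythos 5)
Your proposal is correct and follows essentially the same route as the paper: apply Lemma~\ref{lem4.1Pap1}, raise to the $q$-th power and integrate against $\nu$, use localisation of $\vert v_k\vert^q+\mathbb{E}^{\{\sim k\}}\vert v_k\vert^q$ in $\Lambda(k)$ to pass to $\nu_{\Lambda(k)}\mathbb{E}^{M(k)}$, then the Deuschel--Stroock inequality~\eqref{3.4Pap2} plus the (H1) log-Sobolev inequality for $\nu_{\Lambda(k)}$, whose Dirichlet form is exactly $Q(k,k)$. Your extra care with the sign (applying the correlation bound to $\pm v_k$) and the normalisation of $v$ in the entropic inequality only makes explicit what the paper leaves implicit.
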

\begin{proof} We can start with the bound from Lemma~\ref{lem4.1Pap1}
  \begin{align}\nonumber \nu\left(\mathbb{E}^{\{\sim k\}} \vert f\vert^q\right)^{-\frac{q}{p}}\left\vert\mathbb{E}^{\{\sim k\}}(\vert f\vert^q;v_{k})\right\vert^q\leq
 & c^{q}_{0}\nu \mathbb{E}^{\{\sim k\}}(\vert f-\mathbb{E}^{\{\sim k\}}f\vert^q\left(\vert v_{k}\vert^{q}+\mathbb{E}^{\{\sim k\}}\vert v_k\vert^{q}\right))\\= &  \label{4.4Pap1}c^{q}_{0}\nu_{\Lambda(k)}\left((\mathbb{E}^{M(k)}\vert f-\mathbb{E}^{\{\sim k\}}
f\vert^q)\left(\vert v_{k}\vert^{q}+\mathbb{E}^{\{\sim k\}}\vert v_k\vert^{q}\right)\right) \end{align}
 because $\vert v_{k}\vert^{q}+\mathbb{E}^{\{\sim k\}}\vert v_k\vert^{q}$ is localised in $\Lambda(k)$. If we use the relative entropic inequality (\ref{3.4Pap2}) as we did in Proposition \ref{prp3.2Pap2}, together with hypothesis (C1)   we can bound~\eqref{4.4Pap1} by
\begin{align*}~\eqref{4.4Pap1}\leq &\frac{c^{q}_{0}C}{\epsilon  } Q(k,k)+ \frac{c^{q}_{0}}{ \epsilon}\left( log\nu e^{\epsilon\left(\vert v_{k}\vert^{q}+\mathbb{E}^{\{\sim k\}}\vert v_k\vert^{q}\right)} \right)\nu_{\Lambda(k)}\mathbb{E}^{\{\sim k\}}\vert f-\mathbb{E}^{\{\sim k\}}f\vert^q\end{align*}  \end{proof}
 Before we prove the sweeping out relations of Lemma  \ref{lem4.2Pap2} and Lemma \ref{lem3.5Pap2} we present two lemmata whose proof can be found in \cite{Pa1}. 
\begin{lemma} \label{lem4.3Pap1} The following inequality is satisfied
 \begin{align*}\nu\left\vert \nabla_{j}(\mathbb{E}^{\{\sim i \}}\left\vert f\right\vert^q)^\frac{1}{q}
\right\vert^q \leq& c_1\nu\left\vert \nabla_j f
\right\vert^q\\  &+\frac{J^qc_1}{q^q}  \nu\left(\mathbb{E}^{\{\sim i\}}  \left\vert f\right\vert^q\right)^{-\frac{q}{p}}\left\vert\mathbb{E}^{\{\sim i\}} (\left\vert f\right\vert^q;\sum_{t\in\{i-2,i,i+2\}:t\sim
j
}\nabla_j
V(x_t,\omega_j))\right\vert^q \end{align*}
for $j=i-2,i,i+2$.
 \end{lemma}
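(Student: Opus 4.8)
To prove Lemma~\ref{lem4.3Pap1} the plan is to commute the sub-gradient $\nabla_j$ past the conditional expectation $\mathbb{E}^{\{\sim i\}}$: since $j\in\{i-2,i,i+2\}$ does not belong to $\{\sim i\}=\{i-1,i+1\}$, this produces an extra correlation (``sweeping out'') term, and the two resulting pieces are then controlled by H\"older's inequality together with the DLR relation $\nu\mathbb{E}^{\{\sim i\}}=\nu$. Throughout one may assume $f\geq 0$ and $\mathbb{E}^{\{\sim i\}}|f|^q>0$, since otherwise both sides vanish.

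\emph{Step 1 (sweeping-out identity).} Writing $\rho_i=e^{-H^{\{\sim i\},\omega}}/Z^{\{\sim i\},\omega}$ and differentiating under the integral sign in $x_{\{\sim i\}}$---legitimate because $e^{-H^{\{\sim i\},\omega}}$ and its $x_j$-derivative are integrable---one finds $\nabla_j Z^{\{\sim i\},\omega}/Z^{\{\sim i\},\omega}=-\mathbb{E}^{\{\sim i\}}\nabla_j H^{\{\sim i\},\omega}$, hence $\nabla_j\rho_i=-\rho_i\bigl(\nabla_j H^{\{\sim i\},\omega}-\mathbb{E}^{\{\sim i\}}\nabla_j H^{\{\sim i\},\omega}\bigr)$, so that
$$\nabla_j\bigl(\mathbb{E}^{\{\sim i\}}|f|^q\bigr)=\mathbb{E}^{\{\sim i\}}\bigl(\nabla_j|f|^q\bigr)-\mathbb{E}^{\{\sim i\}}\bigl(|f|^q;\nabla_j H^{\{\sim i\},\omega}\bigr).$$
Since $H^{\{\sim i\},\omega}$ depends on $x_j$ only through the interaction terms coupling $x_j$ with the sites of $\{\sim i\}$, the vector $\nabla_j H^{\{\sim i\},\omega}$ is, up to the couplings $J_{tj}$ (each of modulus $\leq J$ by (H3)), the sum $\sum_{t\in\{i-2,i,i+2\}:\,t\sim j}\nabla_j V(x_t,\omega_j)$ that appears on the right-hand side of the statement.

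\emph{Step 2 (chain rule, splitting, and estimates).} From $\nabla_j\bigl(\mathbb{E}^{\{\sim i\}}|f|^q\bigr)^{1/q}=\tfrac1q\bigl(\mathbb{E}^{\{\sim i\}}|f|^q\bigr)^{\frac1q-1}\nabla_j\bigl(\mathbb{E}^{\{\sim i\}}|f|^q\bigr)$, raising to the power $q$, using $q(\tfrac1q-1)=1-q=-\tfrac qp$ and the bound $|a+b|^q\leq 2^{q-1}(|a|^q+|b|^q)$ (valid since $q\in(1,2]$), one gets, pointwise in the configuration outside $\{\sim i\}$,
$$\Bigl|\nabla_j\bigl(\mathbb{E}^{\{\sim i\}}|f|^q\bigr)^{\frac1q}\Bigr|^q\leq\frac{2^{q-1}}{q^q}\bigl(\mathbb{E}^{\{\sim i\}}|f|^q\bigr)^{-\frac qp}\Bigl(\bigl|\mathbb{E}^{\{\sim i\}}\nabla_j|f|^q\bigr|^q+\bigl|\mathbb{E}^{\{\sim i\}}(|f|^q;\nabla_j H^{\{\sim i\},\omega})\bigr|^q\Bigr).$$
For the first summand, $\bigl|\mathbb{E}^{\{\sim i\}}\nabla_j|f|^q\bigr|\leq q\,\mathbb{E}^{\{\sim i\}}(|f|^{q-1}|\nabla_j f|)\leq q\bigl(\mathbb{E}^{\{\sim i\}}|f|^q\bigr)^{1/p}\bigl(\mathbb{E}^{\{\sim i\}}|\nabla_j f|^q\bigr)^{1/q}$ by H\"older (using $(q-1)p=q$), so the prefactor $\bigl(\mathbb{E}^{\{\sim i\}}|f|^q\bigr)^{-q/p}$ cancels; applying $\nu$ and DLR this contributes $2^{q-1}\nu|\nabla_j f|^q$. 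For the second summand, linearity of the correlation in its second argument and $|J_{tj}|\leq J$ extract a factor $J^q$, leaving $\tfrac{J^q2^{q-1}}{q^q}\nu\bigl[(\mathbb{E}^{\{\sim i\}}|f|^q)^{-q/p}\bigl|\mathbb{E}^{\{\sim i\}}(|f|^q;\sum_{t\in\{i-2,i,i+2\}:\,t\sim j}\nabla_j V(x_t,\omega_j))\bigr|^q\bigr]$. Taking $c_1:=2^{q-1}$ yields the claim.

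\emph{Main obstacle.} The substantive step is Step~1: commuting $\nabla_j$ with $\mathbb{E}^{\{\sim i\}}$ is not exact, and one must keep track of the derivative of the normalising constant $Z^{\{\sim i\},\omega}$, which is precisely what turns the discrepancy into the correlation term $\mathbb{E}^{\{\sim i\}}(|f|^q;\nabla_j H^{\{\sim i\},\omega})$; justifying the interchange of differentiation and integration relies on the regularity of $\phi$ and $V$ (so that, e.g., (H$^*$) applies). The remainder is bookkeeping, the one arithmetic fact that matters being $q-1=q/p$, which makes the powers of $\mathbb{E}^{\{\sim i\}}|f|^q$ cancel and reduces the gradient term, via DLR, to a multiple of $\nu|\nabla_j f|^q$; the factor $J^q$ in the interaction term is then free from (H3). (When $j=i$, where two interaction terms are present, one simply uses that the correlation function is linear in its second slot.)
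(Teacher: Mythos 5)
Your argument is correct and is exactly the standard sweeping-out computation this lemma rests on: Leibniz/chain rule past $\mathbb{E}^{\{\sim i\}}$, with the derivative of the normalisation producing the correlation $\mathbb{E}^{\{\sim i\}}(|f|^q;\nabla_j H^{\{\sim i\},\omega})$, then H\"older with $(q-1)p=q$ to cancel the $(\mathbb{E}^{\{\sim i\}}|f|^q)^{-q/p}$ prefactor on the gradient term, DLR to return to $\nu$, and (H3) to pull out $J^q$. The paper itself defers this proof to [Pa], but the identical manipulation (including the same extraction of $J^q$ in front of the summed correlation, and a constant $c_1$ of the form $2^{q-1}$ up to enlargement) appears in its proof of Lemma \ref{lem4.2Pap2}, so your route coincides with the intended one.
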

 \begin{lemma}\label{lem3.1Pap1}
  Under hypothesis  (C1), for any functions  $u$ localised in $\Lambda(k)$  the following inequality is satisfied
$$
\nu \left\vert\mathbb{E}^{k-1}\mathbb{E}^{k+1}(f;u)\right\vert^q\leq\frac{C}{\epsilon }Q(k,k)+\frac{1}{\epsilon }\left(log\nu_{\Lambda(k)} e^{\epsilon \vert u-\mathbb{E}^{k-1}\mathbb{E}^{k+1}u\vert^q}\right)\nu\vert f-\mathbb{E}^{k-1}\mathbb{E}^{k+1}f\vert^q
$$ for $\epsilon>0$.
\end{lemma}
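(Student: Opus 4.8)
The argument mirrors the ones already carried out in Proposition~\ref{prp3.2Pap2} and Lemma~\ref{lem4.2Pap1}, and is the one given in [Pa]. The plan is to rewrite the correlation in centred form, pass to $\nu$, and then invoke the Deuschel--Stroock entropic inequality~\eqref{3.4Pap2}, with the entropy term absorbed by the $(LSq)$ inequality for $\nu_{\Lambda(k)}$ coming from hypothesis (H1). First I would note that, since $k-1$ and $k+1$ are at distance $2$ and the interaction is nearest neighbour, the measure $\mathbb{E}^{\{\sim k\}}$ factorises and $\mathbb{E}^{k-1}\mathbb{E}^{k+1}=\mathbb{E}^{\{\sim k\}}$; writing $\bar f:=\mathbb{E}^{\{\sim k\}}f$ and $\bar u:=\mathbb{E}^{\{\sim k\}}u$, which are constant with respect to $\mathbb{E}^{\{\sim k\}}$, one has $\mathbb{E}^{k-1}\mathbb{E}^{k+1}(f;u)=\mathbb{E}^{\{\sim k\}}\big((f-\bar f)(u-\bar u)\big)$. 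Applying $|\cdot|$ and then Jensen's inequality for the convex map $t\mapsto t^q$ and the probability measure $\mathbb{E}^{\{\sim k\}}$ gives the pointwise bound
\[
\big|\mathbb{E}^{k-1}\mathbb{E}^{k+1}(f;u)\big|^q\le \mathbb{E}^{\{\sim k\}}\!\left(|f-\bar f|^q\,|u-\bar u|^q\right).
\]

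Next I would take $\nu$ of both sides and use the DLR identity $\nu\mathbb{E}^{\{\sim k\}}=\nu$, so that the right-hand side becomes $\nu\big(|f-\bar f|^q|u-\bar u|^q\big)$. Since $u$, hence $u-\bar u$, is localised in $\Lambda(k)$, the factor $|u-\bar u|^q$ is $\Sigma_{\Lambda(k)}$-measurable, and therefore
\[
\nu\big(|f-\bar f|^q|u-\bar u|^q\big)=\nu_{\Lambda(k)}\!\left(\big(\mathbb{E}^{M(k)}|f-\bar f|^q\big)\,|u-\bar u|^q\right).
\]

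Now set $N:=\nu_{\Lambda(k)}\mathbb{E}^{M(k)}|f-\bar f|^q=\nu|f-\mathbb{E}^{k-1}\mathbb{E}^{k+1}f|^q$ and apply~\eqref{3.4Pap2} to $\mu=\nu_{\Lambda(k)}$ with $t=\epsilon$, exponent $|u-\bar u|^q$, and density $v:=\mathbb{E}^{M(k)}|f-\bar f|^q/N$, which satisfies $\nu_{\Lambda(k)}v=1$; this gives
\[
\nu_{\Lambda(k)}\!\left(\big(\mathbb{E}^{M(k)}|f-\bar f|^q\big)|u-\bar u|^q\right)\le \frac{N}{\epsilon}\log\nu_{\Lambda(k)}e^{\epsilon|u-\bar u|^q}+\frac{N}{\epsilon}\,\nu_{\Lambda(k)}(v\log v).
\]
The last entropy equals $\tfrac1N\,\mathrm{Ent}_{\nu_{\Lambda(k)}}\!\big(\mathbb{E}^{M(k)}|f-\bar f|^q\big)$, and applying the $(LSq)$ inequality of (H1) for $\nu_{\Lambda(k)}$ to $g=(\mathbb{E}^{M(k)}|f-\bar f|^q)^{1/q}$ bounds it by $\tfrac CN\,\nu_{\Lambda(k)}\big|\nabla_{\Lambda(k)}(\mathbb{E}^{M(k)}|f-\bar f|^q)^{1/q}\big|^q=\tfrac CN\,Q(k,k)$, where one uses that in the definition of $Q(k,k)$ the relevant function is $h_k=f-\mathbb{E}^{\{\sim k\}}f=f-\mathbb{E}^{k-1}\mathbb{E}^{k+1}f$. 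Substituting back and recalling $\bar u=\mathbb{E}^{k-1}\mathbb{E}^{k+1}u$ yields exactly the claimed inequality.

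There is no genuinely hard step here; the proof is a careful bookkeeping of conditional expectations, and every ingredient has already appeared in the previous sections. The two points that need care are (i) justifying the identities $\nu\mathbb{E}^{\{\sim k\}}=\nu$ and $\nu=\nu_{\Lambda(k)}\circ\mathbb{E}^{M(k)}$, which rest on the DLR equations and the usual interpretation of $\mathbb{E}^{M(k)}$ as a weak limit over increasing finite subsets of $M(k)$, and (ii) matching the final gradient term with $Q(k,k)$, which is only a matter of recalling that $Q(k,k)$ is built from $h_k=f-\mathbb{E}^{\{\sim k\}}f$.
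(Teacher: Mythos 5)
Your proof is correct, and it follows exactly the route the paper itself uses for the companion estimates (Proposition~\ref{prp3.2Pap2} and Lemma~\ref{lem4.2Pap1}): a pointwise Jensen bound on the centred correlation, the disintegration $\nu=\nu_{\Lambda(k)}\circ\mathbb{E}^{M(k)}$ using the localisation of $u-\mathbb{E}^{k-1}\mathbb{E}^{k+1}u$ in $\Lambda(k)$, the Deuschel--Stroock inequality~\eqref{3.4Pap2}, and the $(LSq)$ inequality of (H1) to produce $\frac{C}{\epsilon}Q(k,k)$. The paper defers this lemma's proof to [Pa], but your argument is the standard one and all steps check out.
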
 
  \begin{lemma} \label{lem4.2Pap2}Assume  either   (H1) or (H2). If the conditions  (C1)-(C3) are satisfied, then the following
Log-Sobolev . Then
$$\nu\left\vert \nabla_{\Gamma_i}(\mathbb{E}^{\Gamma_j}f)
\right\vert^q \leq D_1\mathcal{\nu}\left\vert \nabla_{\Gamma_i} f
\right\vert^q+D_2\nu\left\vert \nabla_{\Gamma_j} f
\right\vert^q$$ for $\{i,j\}=\{0,1\}$ and  constants $D_{1}>0$ and  $0<D_{2}<1$.  \end{lemma}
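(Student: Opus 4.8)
The plan is to reduce the bound on $\nu\left\vert \nabla_{\Gamma_i}(\mathbb{E}^{\Gamma_j}f)\right\vert^q$ to a site-by-site estimate and then control the emerging correlation terms using the machinery already developed. Since $\Gamma_j$ is a union of sites pairwise at distance $>1$, the measure $\mathbb{E}^{\Gamma_j}$ factorizes as a product of the single-site measures $\mathbb{E}^{\{j\},\omega}$, $j\in\Gamma_j$; but it is more convenient to keep the two-site blocks $\{\sim i\}$ in mind, since $\nabla_{\Gamma_i}$ acting on $\mathbb{E}^{\Gamma_j}f$ only ``sees'' the neighbours of each $i\in\Gamma_i$. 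Concretely, for a fixed $i\in\Gamma_i$, the variables $x_{i-1},x_{i+1}$ (which belong to $\Gamma_j$) are integrated out by $\mathbb{E}^{\Gamma_j}$, and the dependence of this conditional expectation on $x_i$ enters only through the interaction terms $J_{i-1,i}V(x_{i-1},x_i)$ and $J_{i+1,i}V(x_{i+1},x_i)$. First I would write $\mathbb{E}^{\Gamma_j}f = \mathbb{E}^{\{\sim i\}}\big(\mathbb{E}^{\Gamma_j\setminus\{i-1,i+1\}}f\big)$ and apply Lemma~\ref{lem4.3Pap1} (with $f$ replaced by the inner conditional expectation, and noting that the distance structure makes the relevant $t\sim j$ sum have at most the stated terms), giving
$$\nu\left\vert \nabla_{i}(\mathbb{E}^{\Gamma_j}f)\right\vert^q \leq c_1\nu\left\vert \nabla_i(\mathbb{E}^{\Gamma_j\setminus\{i\pm1\}}f)\right\vert^q + \frac{J^qc_1}{q^q}\,\nu\left(\mathbb{E}^{\{\sim i\}}|f|^q\right)^{-\frac{q}{p}}\left\vert\mathbb{E}^{\{\sim i\}}\big(|f|^q; \textstyle\sum_t \nabla_i V(x_t,\omega_i)\big)\right\vert^q.$$

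Next I would estimate the two pieces separately. For the first piece, $\nabla_i$ commutes with $\mathbb{E}^{\Gamma_j\setminus\{i\pm1\}}$ (that measure does not involve $x_i$ at all, since $i\notin\Gamma_j$ and none of its integration variables are neighbours of $i$ except $i\pm1$ which have been pulled out), so by Jensen/convexity this term is bounded by $c_1\,\nu|\nabla_i f|^q \leq c_1\,\nu|\nabla_{\Gamma_i}f|^q$. For the correlation piece, I would invoke Lemma~\ref{lem4.2Pap1} with $v_i = \sum_{t\in\{i-2,i,i+2\},\,t\sim i}\nabla_i V(x_t,\omega_i)$ (localised in $\Lambda(i)$), yielding a bound in terms of $Q(i,i)$ plus a term $\frac{c_0^q}{\epsilon}\big(\log\nu e^{\epsilon(|v_i|^q+\mathbb{E}^{\{\sim i\}}|v_i|^q)}\big)\,\nu|f-\mathbb{E}^{\{\sim i\}}f|^q$. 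The exponential moment is finite by hypothesis (H2) together with Remark~\ref{rem2.1nPap1}; then $Q(i,i)$ is controlled by Corollary~\ref{crl3.3Pap2}, and $\nu|f-\mathbb{E}^{\{\sim i\}}f|^q$ by Proposition~\ref{prp3.2Pap2}. Both of these give sums of the form $\sum_{r=i-2}^{i+2}\nu|\nabla_r f|^q + \sum_{r\geq3}J_0^{r-2}\nu|\nabla_{i\pm r}f|^q$, with a prefactor carrying a factor $J^q$ from Lemma~\ref{lem4.3Pap1}.

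Finally I would sum over $i\in\Gamma_i$. The ``diagonal'' contribution $c_1\sum_{i\in\Gamma_i}\nu|\nabla_i f|^q$ sits on $\nabla_{\Gamma_i}f$, but the correlation contributions spill over: a term $\nu|\nabla_r f|^q$ with $r\in\Lambda(i)$ lands on $\Gamma_i$ when $r\equiv i$ and on $\Gamma_j$ when $r\not\equiv i$ (parity), and since each fixed site $r$ appears in boundedly many $\Lambda(i)$'s (and in the tail sums with geometrically decaying weights $J_0^{r-2}$ that sum to a constant), the total rearranges into $D_1\,\nu|\nabla_{\Gamma_i}f|^q + D_2\,\nu|\nabla_{\Gamma_j}f|^q$. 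The crucial point — and the main obstacle — is bookkeeping the constant $D_2$: it is proportional to $J^q$ times fixed geometric constants ($c_0,c_1,C,D,M,S,K/\epsilon$, the sum $\sum_r J_0^{r-2}$), so choosing $J$ sufficiently small (as permitted under $\textbf{A}$ or $\textbf{B}$) forces $D_2<1$ while $D_1$ remains finite. I expect the delicate part is making the parity/spill-over accounting rigorous enough that no site is double-counted and the $J$-dependence of $D_2$ is genuinely visible, rather than hidden in an $i$-dependent constant; everything else is an assembly of the already-proven Proposition~\ref{prp3.2Pap2}, Corollary~\ref{crl3.3Pap2}, and Lemmata~\ref{lem4.1Pap1}--\ref{lem4.3Pap1}.
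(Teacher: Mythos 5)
Your overall architecture --- reduce to a single site $i\in\Gamma_i$, peel off the two neighbouring one-site measures, split via the Leibniz rule into a gradient term plus a correlation term, control the correlation through the entropic inequality together with (H1) and (H2), bound $Q(i,i)$ by Corollary~\ref{crl3.3Pap2} and $\nu\vert f-\mathbb{E}^{\{\sim i\}}f\vert^q$ by Proposition~\ref{prp3.2Pap2}, and finally do the parity bookkeeping so that the $\Gamma_j$-contribution carries an explicit factor $J^q$ --- is exactly the paper's. But there is a concrete misstep in the middle: Lemma~\ref{lem4.3Pap1} is a statement about $\nu\vert \nabla_{j}(\mathbb{E}^{\{\sim i \}}\left\vert f\right\vert^q)^{1/q}\vert^q$, i.e.\ about the nonlinear object relevant to Lemma~\ref{lem3.5Pap2} and Proposition~\ref{prp2.3Pap2}, and it does not apply to the linear object $\nabla_i(\mathbb{E}^{\Gamma_j}f)$ of the present lemma. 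Differentiating $\mathbb{E}^{i-1}\mathbb{E}^{i+1}f=\int f\rho_i$ by the Leibniz rule produces the correlation $\mathbb{E}^{i-1}\mathbb{E}^{i+1}\bigl(f;\nabla_iV(x_{i-1},x_i)+\nabla_iV(x_{i+1},x_i)\bigr)$ of $f$ itself, not the normalised correlation $\bigl(\mathbb{E}^{\{\sim i\}}\vert f\vert^q\bigr)^{-q/p}\bigl\vert\mathbb{E}^{\{\sim i\}}(\vert f\vert^q;\cdot)\bigr\vert^q$ of $\vert f\vert^q$ that appears in your display; the latter arises only from the chain rule applied to $(\mathbb{E}\vert f\vert^q)^{1/q}$. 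Consequently Lemma~\ref{lem4.2Pap1} is also the wrong tool here: the correct correlation estimate is Lemma~\ref{lem3.1Pap1}, which bounds $\nu\vert\mathbb{E}^{i-1}\mathbb{E}^{i+1}(f;u)\vert^q$ by $\frac{C}{\epsilon}Q(i,i)$ plus an exponential-moment term that (H2) and Remark~\ref{rem2.1nPap1} control. A secondary slip: even if Lemma~\ref{lem4.3Pap1} applied, invoking it with $f$ replaced by $\mathbb{E}^{\Gamma_j\setminus\{i\pm1\}}f$ would leave that inner expectation inside the correlation term, whereas your display silently restores $\vert f\vert^q$.

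Once the correlation term is written as $\mathbb{E}^{i-1}\mathbb{E}^{i+1}(f;\cdot)$ and Lemma~\ref{lem3.1Pap1} is used in place of Lemma~\ref{lem4.2Pap1}, everything downstream in your argument --- the factor $J^q$ extracted via (H3), the use of Corollary~\ref{crl3.3Pap2} and Proposition~\ref{prp3.2Pap2}, and the summation over $i\in\Gamma_i$ with the geometric tails $\sum_{r}J_0^{r-2}$ so that $D_2$ is proportional to $J^q$ and can be forced below $1$ for $J$ small --- goes through and coincides with the paper's proof.
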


 \begin{proof} Without loss of generality assume $i=0$ and $j=1$. We have
 \begin{equation}\label{4.5Pap2}\nu\left\vert \nabla_{\Gamma_1}(\mathbb{E}^{\Gamma_{0}}f)
\right\vert^q=\sum_{i\in \Gamma_1} \nu\left\vert \nabla_{i}(\mathbb{E}^{\Gamma_{0}}f)
\right\vert^q\leqslant\sum_{i\in \Gamma_1} \nu\left\vert \nabla_{i}(\mathbb{E}^{i-1}\mathbb{E}^{i+1}f)
\right\vert^q\end{equation}
 If we denote   $\rho_i= \frac{e^{-H(x_{i-1})}e^{-H(x_{i+1})}}{\int e^{-H(x_{i-1})}dx_i\int e^{-H(x_{i+1})}dx_i}$ the density of the measure $\mathbb{E}^{i-1}\mathbb{E}^{i+1}$ we can then write
 \begin{align}\nonumber\nu&\left\vert\nabla_{i}(\mathbb{E}^{i-1}\mathbb{E}^{i+1}f)\right\vert^{q}=
 \nu\left\vert\nabla_{i}(\int \int \rho_{i}  f dx_{i-1}dx_{i+1})\right\vert^{q}\leq
   &\\ &  \nonumber2^{q-1}\nu\left\vert\int \int (\nabla_{i}f) \rho_{i} dx_{i-1}dx_{i+1}\right\vert^{q}+\ 2^{q-1}\nu\left\vert\int \int f(\nabla_{i}\rho_{i} )dx_{i-1}dx_{i+1}\right\vert^{q}\leq
\\ & \label{3.9Pap1}c_{1}\nu\left\vert\mathbb{E}^{i-1}\mathbb{E}^{i+1}(\nabla_{i}f)\right\vert^{q}+\ c_{1}J^q\nu\left\vert\mathbb{E}^{i-1}\mathbb{E}^{i+1}(f; \nabla_{i}V(x_{i-1},x_{i})+\nabla_{i}V(x_{i+1},x_{i}))\right\vert^{q}
\end{align}
where in~\eqref{3.9Pap1} we used hypothesis (C3) to bound the coefficients $J_{i,j}$
and
 we have set $c_1=2^{4q}$. If we apply the H\"older  Inequality to the first term of~\eqref{3.9Pap1} and Lemma~\ref{lem3.1Pap1}  to the
second term, we obtain 
\begin{align}\nu\left\vert\nabla_{i}(\mathbb{E}^{i-1}\mathbb{E}^{i+1}f)\right\vert^q
\leq &c_{1}\nu\left\vert\nabla_{i}f\right\vert^q+\frac{c_{1}J^qC}{\epsilon }Q(i,i)+ \label{4.8Pap2}\frac{c_{1}KJ^q}{\epsilon }\nu\left\vert f-\mathbb{E}^{\{\sim i\}}f \right\vert^{q} \end{align}
where the constant $K$ as in hypothesis $(C2)$. 

If we use Corollary~\ref{crl3.3Pap2} to bound $Q(i,i)$ and Proposition~\ref{prp3.2Pap2} to bound the
last term on the right hand side of \eqref{4.8Pap2}  we obtain\begin{align}\nonumber\nu\left\vert\nabla_i(\mathbb{E}^{i-1}\mathbb{E}^{i+1}f)\right\vert^q
\leq &c_1\nu\left\vert\nabla_i f\right\vert^q+\frac{c_1J^qDC}{\epsilon }\sum_{r=i-2}^{i+2}\nu\left\vert \nabla_r
f
\right\vert^q+\frac{c_1J^qDC}{\epsilon }\sum_{ r=3 }^{\infty} J_0^{r-2}\nu\left\vert \nabla_{i\pm r} f
\right\vert^q \\ &\label{4.9Pap2}+\frac{c_1KMJ^q}{\epsilon }\sum_{r=i-2}^{i+2}\nu\left\vert \nabla_r
f
\right\vert^q+\frac{c_1KMJ^q}{\epsilon }\sum_{r=3}^{\infty} J_0^{r-2}\nu\left\vert \nabla_{i\pm r} f
\right\vert^q \end{align}
for the constants $D$ as in Corollary~\ref{crl3.3Pap2} and $M$  as in  Proposition~\ref{prp3.2Pap2}. From~\eqref{4.5Pap2} and~\eqref{4.9Pap2} we have
\begin{align*}
\nu  \left\vert \nabla_{\Gamma_1}(\mathbb{E}^{\Gamma_0}f)
\right\vert^q\leq & c_1\nu\left\vert\nabla_{ \Gamma_1}f\right\vert^q+\frac{\left(MK+D\right)c_{1}J^q}{\epsilon}\sum_{i\in \Gamma_1}\sum_{r=i-2}^{i+2}\nu\left\vert \nabla_r
f
\right\vert^q
 \\ &+\frac{\left(MK+D\right)c_{1}J^q}{\epsilon}\sum_{i\in \Gamma_1}\sum_{ r=3 }^{\infty} J_0^{r-2}\nu\left\vert \nabla_{i\pm r} f
\right\vert^q\end{align*}
 The last one implies
\begin{align*}\nonumber\nu  \left\vert \nabla_{\Gamma_1}(\mathbb{E}^{\Gamma_0}f)\right\vert^q\leq &   c_1\nu\left\vert\nabla_{ \Gamma_1}f\right\vert^q+\frac{\left(MK+D\right)c_{1}J^q}{\epsilon}(3+\sum_{n=0 }^{\infty} J_0^{2n})\nu\left\vert\nabla_{ \Gamma_1}f\right\vert^q\\  &+\frac{\left(MK+D\right)c_{1}J^q}{\epsilon}(2+\sum_{n=1 }^{\infty} J_0^{2n-1})\nu\left\vert\nabla_{ \Gamma_0}f\right\vert^q \end{align*}
If we choose $J$ in (C3) sufficiently small such that $J_0<1$ we finally obtain
 \begin{align*}\nu  \left\vert \nabla_{\Gamma_1}(\mathbb{E}^{\Gamma_0}f)
\right\vert^q\leq &J^{q}\frac{\left(MK+D\right)c_{1}J^q}{\epsilon}(2+\frac{J_0}{1-J_0^{2}})\nu\left\vert \nabla_ { \Gamma_0}
f
\right\vert^q\\&
  + \left(  c_1+\frac{\left(MK+D\right)c_{1}J^q}{\epsilon}\left(3+\frac{1}{1-J_0^{2}}\right)\right)\nu\left\vert\nabla_{ \Gamma_1}f\right\vert^q\end{align*}
and the lemma follows for $$D_1=c_1+\frac{\left(MK+D\right)c_1J^q}{\epsilon}\left(3+\frac{1}{1-J_0^2}\right)$$ and $J$ sufficiently small such that
$$D_2= J^q\frac{\left(MK+D\right)c_1J^q}{\epsilon}(2+\frac{J_0}{1-J_0^4})<1$$
\end{proof}
We continue with another sweeping out property which will play the basis of the proof of Proposition~\ref{prp2.3Pap2}.  \begin{lemma}
\label{lem3.5Pap2}Assume  either   (H1) or (H2). If the conditions  (C1)-(C3) are satisfied, then \begin{align*} \nu\left\vert \nabla_{j}
\left(\mathbb{E}^{ \{\sim i\}}\left\vert f\right\vert^q\right)^\frac{1}{q}
\right\vert^q\leq H\nu\left\vert \nabla_j f
\right\vert^q+HJ^q\sum_{ r=2 }^{\infty} J_0^{r-2}\nu\left\vert \nabla_{j\pm r} f
\right\vert^q\end{align*}for $j=i-2,i,i+2$ and some positive constant $H$.
\end{lemma}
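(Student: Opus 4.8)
The plan is to reduce the left–hand side, via the unconditional estimate of Lemma~\ref{lem4.3Pap1} and the covariance estimate of Lemma~\ref{lem4.2Pap1}, to the two quantities $Q(i,i)$ and $\nu\vert f-\mathbb{E}^{\{\sim i\}}f\vert^q$, which are already controlled by Corollary~\ref{crl3.3Pap2} and Proposition~\ref{prp3.2Pap2}. First I would apply Lemma~\ref{lem4.3Pap1}, which for $j\in\{i-2,i,i+2\}$ gives
$$\nu\left\vert \nabla_{j}(\mathbb{E}^{\{\sim i\}}\vert f\vert^q)^{\frac{1}{q}}\right\vert^q\leq c_1\nu\vert\nabla_j f\vert^q+\frac{J^q c_1}{q^q}\,\nu\left(\mathbb{E}^{\{\sim i\}}\vert f\vert^q\right)^{-\frac{q}{p}}\left\vert\mathbb{E}^{\{\sim i\}}(\vert f\vert^q;v_j)\right\vert^q ,$$
where $v_j$ is the $j$-th component of the gradient of the log–density of $\mathbb{E}^{\{\sim i\}}$, i.e.\ (up to the coupling constants, which are bounded by $J$ via (H3)) a sum of at most two terms of the form $\nabla_j V(x_{i\mp1},x_j)$. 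The structural point is that in all three cases $j=i-2,i,i+2$ the function $v_j$ is localised in $\Lambda(i)=\{i-2,\dots,i+2\}$; the term $c_1\nu\vert\nabla_j f\vert^q$ is already of the required form, so only the covariance term must be treated.

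For the covariance term I would apply Lemma~\ref{lem4.2Pap1} with $k=i$ and $v_k=v_j$ — legitimate since $v_j$ is localised in $\Lambda(i)$ — to obtain
$$\nu\left(\mathbb{E}^{\{\sim i\}}\vert f\vert^q\right)^{-\frac{q}{p}}\left\vert\mathbb{E}^{\{\sim i\}}(\vert f\vert^q;v_j)\right\vert^q\leq\frac{c_0^q C}{\epsilon}Q(i,i)+\frac{c_0^q}{\epsilon}\Big(\log\nu\,e^{\epsilon(\vert v_j\vert^q+\mathbb{E}^{\{\sim i\}}\vert v_j\vert^q)}\Big)\nu\vert f-\mathbb{E}^{\{\sim i\}}f\vert^q .$$
The exponential moment is controlled uniformly in $i$ and $\omega$ as follows: by Jensen's inequality $e^{\epsilon\mathbb{E}^{\{\sim i\}}\vert v_j\vert^q}\leq\mathbb{E}^{\{\sim i\}}e^{\epsilon\vert v_j\vert^q}$, whence by Cauchy--Schwarz and the DLR equation $\nu\,e^{\epsilon(\vert v_j\vert^q+\mathbb{E}^{\{\sim i\}}\vert v_j\vert^q)}\leq\nu\,e^{2\epsilon\vert v_j\vert^q}$; then, using $\vert v_j\vert^q\leq 2^{q-1}J^q\sum\vert\nabla V_{\mathrm{term}}\vert^q$ over the $\leq2$ terms, one more application of Cauchy--Schwarz together with $J<1$ (so the constant stays $\leq2^{q+2}\epsilon$) and hypothesis (H2) — all the variables occurring lie in $\Lambda(i)$ — gives $\nu\,e^{2\epsilon\vert v_j\vert^q}\leq\hat K=e^K$, so the logarithm is $\leq K$. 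This is essentially the content of Remark~\ref{rem2.1nPap1}.

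It then remains to insert Corollary~\ref{crl3.3Pap2} to bound $Q(i,i)$ and Proposition~\ref{prp3.2Pap2} to bound $\nu\vert f-\mathbb{E}^{\{\sim i\}}f\vert^q$; both produce a multiple of $\sum_{r=i-2}^{i+2}\nu\vert\nabla_r f\vert^q+\sum_{r=3}^{\infty}J_0^{r-2}\nu\vert\nabla_{i\pm r}f\vert^q$. Collecting everything gives
$$\nu\left\vert \nabla_{j}(\mathbb{E}^{\{\sim i\}}\vert f\vert^q)^{\frac{1}{q}}\right\vert^q\leq c_1\nu\vert\nabla_j f\vert^q+\frac{c_1 c_0^q(CD+KM)}{\epsilon q^q}\,J^q\Big(\sum_{r=i-2}^{i+2}\nu\vert\nabla_r f\vert^q+\sum_{r=3}^{\infty}J_0^{r-2}\nu\vert\nabla_{i\pm r}f\vert^q\Big),$$
and finally I would reorganise the right–hand side about the index $j$: since $\vert j-i\vert\leq2$ and $0<J_0<1$, an index at distance $m$ from $i$ is at distance in $[m-2,m+2]$ from $j$, so the finite block lies within distance $4$ of $j$ and the tail weight $J_0^{m-2}$ is dominated by $J_0^{-2}J_0^{\,\mathrm{dist}(\cdot,j)-2}$; absorbing the fixed factor $J_0^{-2}$ and the constants $c_1,c_0,C,D,K,M,\epsilon,q$ into a single $H$ yields the stated inequality. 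The argument is identical under hypotheses $\textbf{A}$ and $\textbf{B}$, since only (H1)--(H3) (and (H4), through Corollary~\ref{crl3.3Pap2} and Proposition~\ref{prp3.2Pap2}) enter, not the one-site assumptions (H0$'$)/(H0$''$).

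The main obstacle is the uniform (in $i,\omega,f$) control of the exponential moment $\nu\,e^{\epsilon(\vert v_j\vert^q+\mathbb{E}^{\{\sim i\}}\vert v_j\vert^q)}$: one cannot apply (H2) directly because of the conditional–expectation term in the exponent, so the Jensen/DLR reduction to $\nu\,e^{2\epsilon\vert v_j\vert^q}$, and the room left by the factor $2^{q+2}\epsilon$ in (H2) (which forces $J<1$), are what make the estimate work. A secondary point, purely combinatorial, is the bookkeeping needed to merge the two $i$-centred geometric tails into a single $j$-centred one.
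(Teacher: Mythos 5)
Your argument is correct and follows essentially the same route as the paper's proof: Lemma~\ref{lem4.3Pap1}, then Lemma~\ref{lem4.2Pap1} for the covariance term, the (H2)-type exponential moment bound (Remark~\ref{rem2.1nPap1}), and finally Corollary~\ref{crl3.3Pap2} and Proposition~\ref{prp3.2Pap2} to control $Q(i,i)$ and $\nu\vert f-\mathbb{E}^{\{\sim i\}}f\vert^q$. Your explicit treatment of the exponential moment and of the re-indexing of the $i$-centred sums into $j$-centred ones merely spells out steps the paper absorbs into ``appropriate choice of constant $H$''.
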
 
\begin{proof}  Assume $f\geq 0$. For $j=i-2,i,i+2$, from   Lemma~\ref{lem4.3Pap1}  we have
 \begin{align*}\nu\left\vert \nabla_{j}(\mathbb{E}^{\{\sim i \}}f^q)^\frac{1}{q}
\right\vert^q \leq& c_1\nu\left\vert \nabla_j f
\right\vert^q+\\  &\frac{J^qc_1}{q^q}  \nu\left(\mathbb{E}^{\{\sim i\}}  f^q\right)^{-\frac{q}{p}}\left\vert\mathbb{E}^{\{\sim i\}} (f^q;\sum_{t\in\{i-2,i,i+2\}:t\sim
j
}\nabla_j
V(x_t,\omega_j))\right\vert^q \end{align*}
If we bound the last term from Lemma~\ref{lem4.2Pap1} we obtain
 \begin{align}\nu\left\vert \nabla_{j}(\mathbb{E}^{\{\sim i \}}f^q)^\frac{1}{q}
\right\vert^q \leq& \nonumber c_1\nu\left\vert \nabla_j f
\right\vert^q+\frac{J^qc^{q}_{0}Cc_1}{\epsilon q^q}   Q(i,i)\\  & +\label{5.9Pap2}\frac{J^qc_1c^{q}_{0}}{ \epsilon q^q}\left( log\nu_{\Lambda(i)}e^{\epsilon\left(\vert W_{i}\vert^q+\mathbb{E}^{\{\sim k\}}\vert W_i\vert^{q}\right)} \right)\nu \vert f-\mathbb{E}^{\{\sim i\}} f\vert^q \end{align}
where above we denoted $ W_i=\sum_{t\in\{i-2,i,i+2\}:t\sim
j
}\nabla_j
V(x_t,\omega_j)$. We can  make use of hypothesis (C2) to bound$$log\nu_{\Lambda(i)}e^{\epsilon\left(\vert W_{i}\vert^q+\mathbb{E}^{\{\sim k\}}\vert W_i\vert^{q}\right)}<K$$as well as use Proposition~\ref{prp3.2Pap2} and  Corollary~\ref{crl3.3Pap2}  to bound   $\nu\vert f-\mathbb{E}^{\{\sim i\}} f \vert^q$ and
$Q(i,i)$. Then~\eqref{5.9Pap2} becomes
 \begin{align}\nu\left\vert \nabla_{j}(\mathbb{E}^{\{\sim i \}}f^q)^\frac{1}{q}
\right\vert^q \leq& \nonumber c_1\nu\left\vert \nabla_j f
\right\vert^q+\frac{J^qc^{q}_{0}C(M+D)c_1}{\epsilon q^q}   \sum_{r=i-2}^{i+2}\nu\left\vert \nabla_{r}
f
\right\vert^q\\  \label{5.10Pap2} &+\frac{J^qc^{q}_{0}C(M+D)c_1}{\epsilon q^q}\sum_{ r=3 }^{\infty} J_0^{r-2}\nu\left\vert \nabla_{i\pm r} f
\right\vert^q \end{align}
The proof of Lemma~\ref{lem3.5Pap2} is complete for appropriate choice of constant $H$. \end{proof}
  We will finish this section with the Proposition~\ref{prp2.3Pap2}.

 ~

\noindent\textit{\textbf{Proof of Proposition~\ref{prp2.3Pap2}. }} Assume $f\geq 0$. We can write
\begin{align}  \label{5.11Pap2}\nu\left\vert \nabla_{\Gamma_1}(\mathbb{E}^{\Gamma_{0}} f^q)^\frac{1}{q}
\right\vert^q=&\sum_{i\in \Gamma_1} \nu\left\vert \nabla_{i}(\mathbb{E}^{\Gamma_{0}} f^q)^\frac{1}{q}
\right\vert^q\leq\sum_{i\in \Gamma_1}\nu\left\vert \nabla_{i}(\mathbb{E}^{\{\sim i \}}f^q)^\frac{1}{q}
\right\vert^q\end{align}
 If we substitute in~\eqref{5.11Pap2} the bound from Lemma~\ref{lem3.5Pap2}, we obtain
 \begin{align*} \nu\left\vert \nabla_{\Gamma_1}(\mathbb{E}^{\Gamma_{0}} f^q)^\frac{1}{q}
\right\vert^q\leq & H\sum_{i\in \Gamma_1}\nu\left\vert \nabla_{i}
f
\right\vert^q+J^qH\sum_{i\in \Gamma_1}\sum_{ r=2 }^{\infty} J_0^{r-2}\nu\left\vert \nabla_{i\pm r} f
\right\vert^q\\ =  &
  H\nu\left\vert \nabla_{ \Gamma_1} f
\right\vert^q+J^qH\left(\sum_{ r=0 }^{\infty} J^R_0\right)(\nu\left\vert \nabla_{ \Gamma_0} f
\right\vert^q+\nu\left\vert \nabla_{ \Gamma_1} f
\right\vert^q)\\= & \nonumber (H+\frac{J^qH}{1-J_0})\nu\left\vert \nabla_{ \Gamma_1} f
\right\vert^q+\frac{J^qH}{1-J_0}\nu\left\vert \nabla_{ \Gamma_0} f
\right\vert^q\end{align*}
 For $J$ in (C3) sufficiently small  such that $\frac{J^qH}{1-J_0}<1$,
the proposition follows for constants
 $$C_{1}=H+\frac{J^qH}{1-J_0} \text{\; and \;} C_{2}=\frac{J^qH}{1-J_0}<1$$\qed

We can now prove Proposition~\ref{prp2.2Pap2}. 
 
~

\noindent
 \textit{\textbf{Proof of  Proposition~\ref{prp2.2Pap2}.}}   We will prove Proposition~\ref{prp2.2Pap2} for $k=1$,
 that is  $$\nu\mathbb{E}^{ \Gamma_{1}}(f^qlog\frac{f^q}{\mathbb{E}^{ \Gamma_{1}}f^q}) \leq\tilde C \nu\left\vert \nabla_{\Gamma_0}
f
\right\vert^q+\tilde C\nu\left\vert \nabla_{\Gamma_1} f
\right\vert^q$$ for $f\geq 0$. Define the sets $$a_0\equiv\{\sim 0\}\text{ \;and \;}   a_k\equiv\begin{cases}\{\sim(2 k+2)\} & \text{ \;for
\;} k\in \mathbb{N}\ \text{\; odd  \;} \\
\{\sim(-2k)\} & \text{ \;for
\;} k\in \mathbb{N}\ \text{\; even  \;} \\
\end{cases}$$ and consider the following representation
 of the odd integers $\Gamma_1$ \begin{align*}\Gamma_1=\cup_{k=0}^{+\infty} a_k=\{\sim0\}\cup\{\sim4\}\cup\{\sim(-4)\}\cup\{\sim8\}\cup\{\sim(-8)\}\cup...\end{align*}where
 we have denoted $\{\sim k\}=\{j\in \mathbb{Z} : j\sim k\}=\{k-1,k+1\}$. Then
  we can write
\begin{align}\nonumber\nu\mathbb{E}^{ \Gamma_{1}}(f^qlog\frac{f^q}{\mathbb{E}^{ \Gamma_{1}}f^q}) =&\nu\mathbb{E}^{ \{\sim0\}}(f^qlog\frac{f^q}{\mathbb{E}^{ \{\sim0\}}f^q})\\   &\label{3.18Pap2}+\sum_{k=1}^{+\infty}\nu\mathbb{E}^{ a_k}(\mathbb{E}^{ a_{k-1}}...\mathbb{E}^{ a_0}f^q\log\frac{\mathbb{E}^{ a_{k-1}}...\mathbb{E}^{ a_0}f^q}{\mathbb{E}^{ a_k}...\mathbb{E}^{ a_0}f^q})\end{align}
If we use Proposition~\ref{prp3.4Pap2} and Proposition \ref{lem1.7P5} in the case of hypothesis $\mathbf{A}$ and   $\mathbf{B}$ respectively to bound the first term in~\eqref{3.18Pap2} we have   \begin{align}\label{3.19Pap2}\nu\mathbb{E}^{ \{\sim0\}}(f^qlog\frac{f^q}{\mathbb{E}^{ \{\sim0\}}f^q})\leq R\sum_{r=-2}^{2}\nu\left\vert \nabla_{r}
f
\right\vert^q+R\sum_{ r=3 }^{\infty} J_0^{r-2}\nu\left\vert \nabla_{\pm r} f
\right\vert^q \end{align}  where $R=\max\{R_1,R_2\}$ for the constants $R_1$ and $R_2$ as in Proposition~\ref{prp3.4Pap2} and Proposition \ref{lem1.7P5}. For the terms in the sum in the last term of~\eqref{3.18Pap2} , for $k$ odd we have \begin{align}\nonumber
\nu\mathbb{E}^{ a_k}(\mathbb{E}^{ a_{k-1}}...\mathbb{E}^{ a_0}f^q\log\frac{\mathbb{E}^{ a_{k-1}}...\mathbb{E}^{ a_0}f^q}{\mathbb{E}^{ a_k}...\mathbb{E}^{ a_0}f^q})\leq &R\sum_{r=2k}^{2k+4}\nu\left\vert \nabla_{r}
(\mathbb{E}^{ a_{k-1}}...\mathbb{E}^{ a_0}f^q)^\frac{1}{q}
\right\vert^q\\   &\label{3.20Pap2}+R\sum_{ r=3 }^{\infty} J_0^{r-2}\nu\left\vert \nabla_{2k+2\pm r} (\mathbb{E}^{ a_{k-1}}...\mathbb{E}^{ a_0}f^q)^\frac{1}{q}\right\vert^q\end{align}  while for $k$ even we have \begin{align}\nonumber
\nu\mathbb{E}^{ a_k}(\mathbb{E}^{ a_{k-1}}...\mathbb{E}^{ a_0}f^q\log\frac{\mathbb{E}^{ a_{k-1}}...\mathbb{E}^{ a_0}f^q}{\mathbb{E}^{ a_k}...\mathbb{E}^{ a_0}f^q})\leq &R\sum_{r=-2k-2}^{-2k+2}\nu\left\vert \nabla_{r}
(\mathbb{E}^{ a_{k-1}}...\mathbb{E}^{ a_0}f^q)^\frac{1}{q}
\right\vert^q\\   &\label{3.22Pap2}+R\sum_{ r=3 }^{\infty} J_0^{r-2}\nu\left\vert \nabla_{-2k\pm r} (\mathbb{E}^{ a_{k-1}}...\mathbb{E}^{ a_0}f^q)^\frac{1}{q}\right\vert^q\end{align} For the quantities involved in~\eqref{3.20Pap2} and~\eqref{3.22Pap2}, if we define  $m_k=\min\{i:i\in\cup_{j=0}^{k-1} a_j\}$ and $M_k=\max\{i:i\in\cup_{j=0}^{k-1} a_j\}$, we then have 
\begin{align}\nu\left\vert \nabla_{s}
(\mathbb{E}^{ a_{k-1}}...\mathbb{E}^{ a_0}f^q)^\frac{1}{q}
\right\vert^q\label{3.21Pap2}\leq\begin{cases}\nu\left\vert \nabla_{s}
(\mathbb{E}^{ \{\sim s\}}f^q)^\frac{1}{q}
\right\vert^q   & \text{if \;} s \text{\; is even, }s\in(m_{k},M_{k})
\\
\nu\left\vert \nabla_{s}
(\mathbb{E}^{ \{\sim m_{k}\}}f^q)^\frac{1}{q}
\right\vert^q &   \text{if \;}s= m_{k}-1  \\
\nu\left\vert \nabla_{s}
(\mathbb{E}^{ \{\sim M_{k}\}}f^q)^\frac{1}{q}
\right\vert^q   & \text{if \;}s=M_{k}+1  \\
\ \ \ \ \ \ \ \ \ \ \ 0 & \text{if \;} s \text{\; is odd, }s\in[ m_{k},M_{k}]\\ \nu\left\vert \nabla_{s}
f\right\vert^q  & \text{if \;} s\notin [m_k,M_k] 
\end{cases}
\end{align}
 From relationships~\eqref{3.18Pap2} -~\eqref{3.21Pap2} we derive that the right hand side of~\eqref{3.18Pap2}
is reduced to an infinite sum of the following terms 
$$J_0^r\nu\left\vert \nabla_{s}
(\mathbb{E}^{ \{\sim s\}}f^q)^\frac{1}{q}
\right\vert^q \text{\; and \;} J_0^r\nu\left\vert \nabla_{t}
(\mathbb{E}^{ \{\sim s\}}f^q)^\frac{1}{q}
\right\vert^q \text{\; for \;}t=\{s-2,s+2\}$$
for every $s\in \Gamma_0$ and $r \in \mathbb{N}$.  For every $s$ and
$t$ the  above terms  are repeated at most two times for every different  $r$. So,
$\nu\left\vert \nabla_{s}
(\mathbb{E}^{ \{\sim s\}}f^q)^\frac{1}{q}
\right\vert^q \text{\; and \;} \nu\left\vert \nabla_{t}
(\mathbb{E}^{ \{\sim s\}}f^q)^\frac{1}{q}
\right\vert^q $ occur in the sum at most $2\sum_{n=0}^{+\infty}J_0^n$
 times each. Thus, we finally obtain\begin{align}\nonumber\nu\mathbb{E}^{ \Gamma_{1}}(f^q&\log\frac{f^q}{\mathbb{E}^{ \Gamma_{1}}f^q})\leq  R\sum_{r=-2}^{2}\nu\left\vert \nabla_{r}
f
\right\vert^q+R\sum_{ r=3 }^{\infty} J_0^{r-2}\nu\left\vert \nabla_{\pm r} f
\right\vert^q\\   &+2(  \sum_{n=0}^{+\infty}J_0^n)R\sum_{s\in\mathbb{Z}}\nu\left\vert \nabla_{2s}
(\mathbb{E}^{ \{\sim2 s\}}f^q)^\frac{1}{q}
\right\vert^q\nonumber \\ & \nonumber+2( \sum_{n=0}^{+\infty}J_0^n)R\sum_{s\in \mathbb{N}}\left(\nu\left\vert \nabla_{2s+2}
(\mathbb{E}^{ \{\sim2 s\}}f^q)^\frac{1}{q}
\right\vert^q+\nu\left\vert \nabla_{2s-2}
(\mathbb{E}^{ \{\sim(-2 s)\}}f^q)^\frac{1}{q}
\right\vert^q\right)\end{align}
If we choose  $J$ in (C3) sufficient small such that $J_0=J^\frac{q-1}{4}<1$, the last leads to  
\begin{align}\nonumber \nu\mathbb{E}^{ \Gamma_{1}}(f^q\log\frac{f^q}{\mathbb{E}^{ \Gamma_{1}}f^q})\leq&\nonumber R\sum_{r=-2}^{2}\nu\left\vert \nabla_{r}
f
\right\vert^q+R\sum_{ r=3 }^{\infty} J_0^{r-2}\nu\left\vert \nabla_{\pm r} f
\right\vert^q\\ &\nonumber+\frac{2R}{1-J_0}\sum_{s\in\mathbb{Z}}\nu\left\vert \nabla_{2s}
(\mathbb{E}^{ \{\sim2 s\}}f^q)^\frac{1}{q}
\right\vert^q\\   & \label{3.25Pap2}+\frac{2R}{1-J_0}\sum_{s\in \mathbb{N}}\left(\nu\left\vert \nabla_{2s+2}
(\mathbb{E}^{ \{\sim2 s\}}f^q)^\frac{1}{q}
\right\vert^q+\nu\left\vert \nabla_{2s-2}
(\mathbb{E}^{ \{\sim(-2 s)\}}f^q)^\frac{1}{q}
\right\vert^q\right)\end{align}
  In order to bound the terms involved in the summations in~\eqref{3.25Pap2} we will  apply
Lemma~\ref{lem3.5Pap2} to bound the right hand side of ~\eqref{3.25Pap2}, from which we obtain
\begin{align}\nonumber\nu\mathbb{E}^{ \Gamma_{1}}(f^q\log\frac{f^q}{\mathbb{E}^{ \Gamma_{1}}f^q})\leq & R\sum_{r=-2}^{2}\nu\left\vert \nabla_{r}
f
\right\vert^q+R\sum_{ r=3 }^{\infty} J_0^{r-2}\nu\left\vert \nabla_{\pm r} f
\right\vert^q \\ & \nonumber+\frac{4HR}{1-J_0}\sum_{s\in\mathbb{Z}}\left( \nu\left\vert \nabla_{2s}
f
\right\vert^q+\sum_{ r=2 }^{\infty} J_0^{r-2}\nu\left\vert \nabla_{2s\pm r} f
\right\vert^q\right)\\   \nonumber \leq & R\sum_{r=-2}^{2}\nu\left\vert \nabla_{r}
f
\right\vert^q+\left(\frac{4HR}{1-J^2_0}( 1+\sum_{ r=0 }^{\infty} J_0^{2r} )+R\right)\sum_{s\in\mathbb{Z}}\nu\left\vert \nabla_{2s}
f
\right\vert^q\\  & \nonumber+\left(\frac{4HR}{1-J_0}( 1+\sum_{ r=0 }^{\infty} J_0^{r} )+R\right)\sum_{s\in\mathbb{Z}}\nu\left\vert \nabla_{2s+1}
f
\right\vert^q
\end{align}
where the two  sums are finite since $J_0<1$. The proposition follows for  $$\tilde C=\frac{4HR}{1-J_0}( 1+\sum_{ r=0 }^{\infty} J_0^{r} )+2R$$\qed

\section{Proof of  Proposition~\ref{prp2.4Pap2}.}\label{section5}
In    Proposition~\ref{prp3.4Pap2} and Proposition~\ref{lem1.7P5}  we showed 
a Log-Sobolev type inequality for the one
site measure $\mathbb{E}^{\{\sim i\},\omega}$,  under hypothesis (H1) and (H2) respectively, while    in   Proposition~\ref{prp3.2Pap2} a Spectral Gap type inequality was shown for both cases.  In the following lemma the Spectral Gap type inequality \eqref{2.2Pap2} will  also
be extended
to the product measure $\mathbb{E}^{\Gamma_{i},\omega},i=0,1$. What we will
 show is   that~\eqref{2.1Pap2} for $\mathbb{E}^{\Gamma_{i},\omega},i=0,1$
actually implies~\eqref{2.2Pap2}  for $\mathbb{E}^{\Gamma_{i},\omega},i=0,1$, a basic
result for the usual Log-Sobolev and Spectral Gap inequalities.

\begin{lemma}\label{lem4.3Pap2}Assume  either   (H1) or (H2). If the conditions  (C1)-(C3) are satisfied, then  the following
Spectral Gap type inequality holds  
 $$\nu\left\vert f-\mathbb{E}^{ \Gamma_{i},\omega}f\right\vert^q\ \leq \tilde R\nu\left\vert \nabla
f\right\vert^q$$for $i=0,1$ and  some positive constant $\tilde R$, where $\nabla:=\nabla_{\mathbb{Z}}$.\end{lemma}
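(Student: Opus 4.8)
The plan is to deduce the statement from Proposition~\ref{prp2.2Pap2} by the classical implication ``logarithmic Sobolev $\Rightarrow$ spectral gap,'' carried out directly at the level of the $\nu$-averaged inequalities. First I would rewrite Proposition~\ref{prp2.2Pap2}, using $|\nabla g|^q=|\nabla_{\Gamma_0}g|^q+|\nabla_{\Gamma_1}g|^q$ and $\mathbb{Z}=\Gamma_0\cup\Gamma_1$, in the compact form
$$\nu\mathbb{E}^{\Gamma_i}\Big(|g|^q\log\tfrac{|g|^q}{\mathbb{E}^{\Gamma_i}|g|^q}\Big)\le \tilde C\,\nu|\nabla g|^q,\qquad i=0,1,$$
valid for all admissible $g$. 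Next I would record the elementary but necessary fact $\nu\mathbb{E}^{\Gamma_i}=\nu$: any function in sight depends on finitely many coordinates, and since $\Gamma_i$ contains no two neighbours the measure $\mathbb{E}^{\Gamma_i,\omega}$ is the product $\bigotimes_{j\in\Gamma_i}\mathbb{E}^{\{j\},\omega}$ (the Hamiltonian $H^{\Gamma_i,\omega}$ splits over the sites of $\Gamma_i$), so its factors at sites outside the (finite) support of $g$ act trivially and $\mathbb{E}^{\Gamma_i,\omega}g$ coincides with a finite-volume Gibbs average, to which the DLR identity applies. In particular $\nu|f-\mathbb{E}^{\Gamma_i}f|^q=\nu\mathbb{E}^{\Gamma_i}|f-\mathbb{E}^{\Gamma_i}f|^q$, so it suffices to bound this quantity by a constant times $\nu|\nabla f|^q$.

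To do that I would run the standard argument under the sign $\nu$. For $q=2$: substitute $g=1+tf$ in the displayed inequality (with $f$, $|\nabla f|$ bounded and $t$ so small that $1+tf\ge\tfrac12$); for each fixed $\omega$, a Taylor expansion gives $\mathbb{E}^{\Gamma_i}\!\big((1+tf)^2\log\frac{(1+tf)^2}{\mathbb{E}^{\Gamma_i}(1+tf)^2}\big)=2t^2\mathrm{Var}_{\mathbb{E}^{\Gamma_i}}(f)+r(\omega,t)$ with $|r(\omega,t)|\le c\,\|f\|_\infty^{3}t^{3}$, while the right-hand side is $\tilde C\,t^{2}\,\nu|\nabla f|^2$; dividing by $t^2$ and letting $t\to0$ yields $2\,\nu\mathrm{Var}_{\mathbb{E}^{\Gamma_i}}(f)\le\tilde C\,\nu|\nabla f|^2$, and the general case follows by approximation. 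For $1<q<2$ the naive linearisation fails (the $L^q$ Dirichlet form is of order $t^q\gg t^2$), so I would instead invoke the implication ``(LSq)$\Rightarrow$($q$-spectral gap)'' of [B-Z] recalled in Remark~\ref{rem1.1} (with loss $\tfrac{4}{\log 2}$): that argument only substitutes test functions into the inequality and uses convexity and optimisation over a parameter, all of which are monotone, pointwise-in-$\omega$ operations, so it goes through verbatim with the probability measure replaced by the averaged functionals above. In both cases we obtain $\nu|f-\mathbb{E}^{\Gamma_i}f|^q\le\tilde R\,\nu|\nabla f|^q$ with $\tilde R=\tfrac{4\tilde C}{\log 2}$.

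The main obstacle is precisely this last point: we do \emph{not} have a pointwise-in-$\omega$ logarithmic Sobolev inequality for $\mathbb{E}^{\Gamma_i,\omega}$, only the $\nu$-averaged Proposition~\ref{prp2.2Pap2}, so one must make sure the [B-Z] chain of estimates is performed with the conditional averages $\mathbb{E}^{\Gamma_i}$ kept in place and the functional $\nu$ applied monotonically at the end; I would do this by reproducing the [B-Z] computation step by step and checking that each inequality in it is a pointwise inequality between the relevant (non-negative) quantities. As a cross-check for $q=2$ that avoids this point altogether, one can instead use the product structure above together with the Efron--Stein (tensorisation) inequality for the variance, $\mathrm{Var}_{\mathbb{E}^{\Gamma_i}}(f)\le\sum_k\mathbb{E}^{\Gamma_i}|f-\mathbb{E}^{a_k}f|^2$ over the blocks $a_k=\{\sim c_k\}$ of Proposition~\ref{prp2.2Pap2}; applying $\nu$, using $\nu\mathbb{E}^{\Gamma_i}=\nu$ and then Proposition~\ref{prp3.2Pap2} on each summand, and summing the resulting geometric series in $J_0$ (each $\nu|\nabla_sf|^2$ enters with a bounded coefficient since the centres $c_k$ are $4$-separated) again gives the claim.
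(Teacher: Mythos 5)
Your reduction to bounding $\nu\mathbb{E}^{\Gamma_i}|f-\mathbb{E}^{\Gamma_i}f|^q$ via the DLR identity is fine, and for $q=2$ your linearisation (or the Efron--Stein tensorisation combined with Proposition~\ref{prp3.2Pap2}) does give the claim directly, since the expansion of the entropy of $(1+tf)^2$ produces the conditional variance pointwise in $\omega$ without ever having to centre $f$. But for $1<q<2$, which is the substance of the lemma, there is a genuine gap exactly at the point you flag as ``the main obstacle'', and your proposed resolution misdiagnoses it. The problem is not whether each step of the [B-Z] argument is a monotone, pointwise-in-$\omega$ operation that commutes with applying $\nu$ at the end; it is that the [B-Z] implication (LSq)$\Rightarrow$(SGq) requires centering $f$ at its median (or mean) with respect to the underlying measure, and here that measure is $\mathbb{E}^{\Gamma_i,\omega}$, so the centering $m(\omega)$ (or $\mathbb{E}^{\Gamma_i,\omega}f$) is \emph{not a constant} but a function of the coordinates in $\Gamma_j$, $j\neq i$. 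When you feed the centred function (or its positive and negative parts) into the averaged inequality of Proposition~\ref{prp2.2Pap2}, the right-hand side is $\tilde C\,\nu|\nabla_{\Gamma_0}(f-m)|^q+\tilde C\,\nu|\nabla_{\Gamma_1}(f-m)|^q$, and the transverse term $\nu|\nabla_{\Gamma_j}m|^q$ does not vanish and is not dominated by $\nu|\nabla f|^q$; so the argument does not ``go through verbatim'' and your constant $\tilde R=\tfrac{4\tilde C}{\log 2}$ cannot be obtained this way.

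This is precisely where the paper's proof needs an additional ingredient that your proposal never supplies: after deriving $\nu|g|^q\le\frac{\tilde C}{\log 2}\nu|\nabla g|^q$ for (conditionally) median-zero $g$ via the $f^{\pm}$ splitting, the paper applies it to $g=f-\mathbb{E}^{\Gamma_0,\omega}f$ and is then confronted with the term $\nu\left\vert \nabla_{\Gamma_1}(\mathbb{E}^{\Gamma_0,\omega}f)\right\vert^q$, which it controls by the sweeping-out estimate of Lemma~\ref{lem4.2Pap2}, namely $\nu|\nabla_{\Gamma_i}(\mathbb{E}^{\Gamma_j}f)|^q\le D_1\nu|\nabla_{\Gamma_i}f|^q+D_2\nu|\nabla_{\Gamma_j}f|^q$; the final constant $\tilde R$ genuinely involves $D_1,D_2$. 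To repair your argument for $1<q<2$ you would have to either quote and use Lemma~\ref{lem4.2Pap2} at this point (centering at the conditional mean, and then comparing mean- and median-centering pointwise in $\omega$ before applying the entropy lower bound), or produce some other mechanism for bounding gradients of conditional quantities such as $\mathbb{E}^{\Gamma_0,\omega}f$ or the conditional median in the $\Gamma_1$-directions; without this the proof is incomplete, and your Efron--Stein cross-check does not help since it is restricted to $q=2$.
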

\begin{proof}
 To show the lemma we will follow the steps of the proof of the usual LSq implying
the SGq inequality (see \cite{B-Z}).  We will show the inequality for  $i=0$. From Proposition~\ref{prp2.2Pap2} we have
\begin{equation}\label{n4.10Pap2}\nu(\left\vert f\right\vert^qlog\frac{\left\vert f\right\vert^q}{\mathbb{E}^{ \Gamma_0,\omega}\left\vert f\right\vert^q}) \leq \tilde C\nu\left\vert \nabla
f
\right\vert^q\end{equation} Assume without loss of generality that the function $f$ has median zero and denote  $f^+=max(f,0)$ and $f^-=min(f,0)$. Then, according to Lemma 2.2  from \cite{B-Z} and the proof of Theorem 2.1 from the same paper,  we obtain 
$$\mathbb{E}^{ \Gamma_0,\omega}((f^{+})^{q}log\frac{(f^{+})^{q}}{\mathbb{E}^{ \Gamma_0,\omega}(f^{+})^{q}}) \geq\  \log2\mathbb{E}^{ \Gamma_0,\omega}\left((f^{+})^{q}\mathcal{I}_{\{f>0\}}\right)$$
as well as 
$$\mathbb{E}^{ \Gamma_0,\omega}((f^{-})^{q}log\frac{(f^{-})^{q}}{\mathbb{E}^{ \Gamma_0,\omega}(f^{-})^{q}}) \geq\  \log2\mathbb{E}^{ \Gamma_0,\omega}\left((f^{-})^{q}\mathcal{I}_{\{f<0\}}\right)$$
If we apply the Gibbs measure $\nu$ to the last two inequalities we get
\begin{equation}\label{n4.11Pap2}\nu((f^{+})^{q}log\frac{(f^{+})^{q}}{\mathbb{E}^{ \Gamma_0,\omega}(f^{+})^{q}}) \geq\  \log2\nu\left((f^{+})^{q}\mathcal{I}_{\{f>0\}}\right)\end{equation}
and
\begin{equation}\label{n4.12Pap2}\nu((f^{-})^{q}log\frac{(f^{-})^{q}}{\mathbb{E}^{ \Gamma_0,\omega}(f^{-})^{q}}) \geq\  \log2\nu\left((f^{-})^{q}\mathcal{I}_{\{f<0\}}\right)\end{equation}
If we use~\eqref{n4.10Pap2} to bound from above the right hand sides of~\eqref{n4.11Pap2} and~\eqref{n4.12Pap2} we obtain
$$  \tilde C\nu\left(\left\vert \nabla f^{+}
\right\vert^q\mathcal{I}_{\{f>0\}}\right)\geq\  \log2\nu\left((f^{+})^{q}\mathcal{I}_{\{f>0\}}\right)$$
and 
$$  \tilde C\nu\left(\left\vert \nabla f
^{-}\right\vert^q\mathcal{I}_{\{f<0\}}\right)\geq\  \log2\nu\left((f^{-})^{q}\mathcal{I}_{\{f<0\}}\right)$$
If we add the last two and use the estimates  $\left\vert \nabla f
^{+}\right\vert^q\leq\left\vert \nabla f
^{}\right\vert^q$ and $\left\vert \nabla f
^{-}\right\vert^q\leq\left\vert \nabla f
^{}\right\vert^q$ for the gradient, we get$$   \nu(\left\vert f\right\vert^{q})\leq \frac{\tilde C}{\log2}\nu\left\vert \nabla f
\right\vert^q$$
 The last relationship for $f-\mathbb{E}^{ \Gamma _0,\omega}f$ in  place
of $f$ gives \begin{align}\nonumber
\nu\left\vert f-\mathbb{E}^{ \Gamma_0,\omega}f\right\vert^q\leq &\frac{\tilde C}{\log2}\nu\left\vert \nabla_{\Gamma_1}
(f-\mathbb{E}^{ \Gamma_0,\omega}f)
\right\vert^q+\frac{\tilde C}{\log2}\nu\left\vert \nabla_{ \Gamma_0} (f-\mathbb{E}^{ \Gamma_0,\omega}f)
\right\vert^q\\ \leq &\label{4.11Pap2} 2^q\frac{\tilde C}{\log2}\nu\left\vert \nabla_{\Gamma_1}
f\right\vert^q+2^q\frac{\tilde C}{\log2}\nu\left\vert \nabla_{\Gamma_1}
\mathbb{(E}^{ \Gamma_0,\omega}f)
\right\vert^q+\frac{\tilde C}{\log2}\nu\left\vert \nabla_{ \Gamma_0} f\right\vert^q\end{align}
If we use Lemma~\ref{lem4.2Pap2} to bound the second term in the right hand side of~\eqref{4.11Pap2}
we get
\begin{align*}\nu \vert f-\mathbb{E}^{ \Gamma_0,\omega}f\vert^q\leq(2^q\frac{\tilde C}{\log2}D_{1}+2^q\tilde C)\nu\left\vert \nabla_{\Gamma_1}
f\right\vert^q+(2^q\frac{\tilde C}{\log2}D_{2}+\tilde C)\nu\left\vert \nabla_{ \Gamma_0} f\right\vert^q \end{align*}
and the lemma follows for $\tilde R =\max\{\frac{2^q\tilde CD_1+2^q\tilde C}{\log2},\frac{2^q\tilde CD_{2}+\tilde C}{\log2}\}$.
\end{proof}
  Now we can prove
 Proposition~\ref{prp2.4Pap2}.

 ~

\noindent
\textit{\textbf{Proof of  Proposition~\ref{prp2.4Pap2}.}}   In order to  show Proposition \ref{prp2.4Pap2} we can follow  \cite{G-Z} as in \cite{I-P} and \cite{Pa1} for the case of quadratic and non quadratic interactions respectively. In both these two cases    $\mathbb{E}^{i,\omega} $ satisfied an LSq uniformly on $\omega$, that implies that the product measures $\mathbb{E}^{ \Gamma_{i},\omega}$ satisfied the Log-Sobolev q inequality and thus the q Spectral Gap inequality. In the case of Proposition~\ref{prp2.4Pap2} we have assumed the weaker assumptions (H1) and (H2), in which case we can use the weaker result of  Lemma ~\ref{lem4.3Pap2} $$\nu\left\vert f-\mathbb{E}^{ \Gamma_{i},\omega}f\right\vert^q\ \leq \tilde R\nu\left\vert \nabla_{\Gamma_{i}}
f\right\vert^q+ \tilde R\nu\left\vert \nabla_{\Gamma_{j}}
f\right\vert^q$$
for $\{i,j\}=\{0,1\}$.
   For $i\not=j$ we then have that
 \begin{align}\nonumber\nu\vert\mathbb{E}^{\Gamma_{j}} f- \mathbb{E}^{\Gamma_{i}}\mathbb{E}^{\Gamma_{j}} f\vert^q&=\nu\mathbb{E}^{\Gamma_{i}}\vert\mathbb{E}^{\Gamma_{j}} f- \mathbb{E}^{\Gamma_{i}}\mathbb{E}^{\Gamma_{j}} f\vert^q\\ &
  \label{4.12Pap2}\leq \tilde R\nu\left\vert \nabla_{\Gamma_i}(\mathbb{E}^{\Gamma_{j}} f
)\right\vert^q\end{align}          
the last inequality from Lemma~\ref{lem4.3Pap2} for  the measures  $\mathbb{E}^{\Gamma_{0}}$  and  $\mathbb{E}^{\Gamma_{1}}$.
 If we use Lemma~\ref{lem4.2Pap2} to bound the last term in the right hand side of~\eqref{4.12Pap2} we get
$$\nu\vert\mathbb{E}^{\Gamma_{j}} f- \mathbb{E}^{\Gamma_{i}}\mathbb{E}^{\Gamma_{j}} f\vert^q\leq \tilde RD_{1}\nu\left\vert \nabla_{\Gamma_i} f
\right\vert^q+ \tilde RD_{2}\nu\left\vert \nabla_{\Gamma_j} f
\right\vert^q\\ $$  
  From the last inequality we obtain that for any   $n\in \mathbb{N}$, 
 \begin{align*}\nu\vert \mathcal{P}^{n}f- \mathbb{E}^{\Gamma_0}\mathcal{P}^{n} f\vert^q &\leq  \tilde RD_1\nu \vert \nabla_{\Gamma_0}(\mathbb{E}^{\Gamma_0}\mathcal{P}^{n-1} f)\vert^q+\tilde RD_2\nu \vert \nabla_{\Gamma_1}(\mathbb{E}^{\Gamma_0}\mathcal{P}^{n-1} f)\vert^q\\&=\tilde RD_2\nu \vert \nabla_{\Gamma_1}(\mathbb{E}^{\Gamma_0}\mathcal{P}^{n-1} f)\vert^q\end{align*}
 If we use Lemma~\ref{lem4.2Pap2} to bound the last expression we have the following 
\begin{equation} \label{BorelCant1} \nu\vert \mathcal{P}^{n}f- \mathbb{E}^{\Gamma_0}\mathcal{P}^{n} f\vert^q \leq \hat RD_2^{n}\left(D_1\nu\left\vert \nabla_{\Gamma_1} f
\right\vert^q+D_2\nu\left\vert \nabla_{\Gamma_0} f
\right\vert^q\right)\end{equation} Similarly we obtain 
 \begin{equation} \label{BorelCant1+2} \nu\vert\mathbb{E}^{\Gamma_0} \mathcal{P}^{n}f- \mathcal{P}^{n+1} f\vert^q\leq  \hat RD_2^{n}\left(D_1\nu\left\vert \nabla_{\Gamma_1} f
\right\vert^q+D_2\nu\left\vert \nabla_{\Gamma_0} f
\right\vert^q\right)\end{equation}

Consider the sequence $\{\mathcal{Q}^n\}_{n\in \mathbb{N}}$ defined as 
$$\mathcal{Q}^{n}f=\begin{cases}\mathcal{P}^{\frac{n}{2}}f & \text{ if \ } n \text{ \ even} \\
\mathbb{E}^{\Gamma_0}\mathcal{P}^{\frac{n-1}{2}}f & \text{ if \ } n \text{ \ odd} \\
\end{cases}$$
for every $n\in\mathbb{N}$. Hence, if we define the sets $$A_n=\{\vert \mathcal{Q}^nf- \mathcal{Q}^{n+1} f\vert \geq (\frac{1}{2})^n\}$$ we obtain 
\begin{equation*}\nu (A_n)=\nu\left( \{\vert \mathcal{Q}^nf- \mathcal{Q}^{n+1} f\vert \geq (\frac{1}{2})^n\} \right)\leq 2^{qn}\nu\vert \mathcal{Q}^nf- \mathcal{Q}^{n+1} f\vert^q  \end{equation*}by Chebyshev inequality. If we use (\ref{BorelCant1}) and (\ref{BorelCant1+2}) to bound the last we have 
\begin{equation*}\nu (A_n)\leq (2^{q}D_2^\frac{1}{2} )^{n}\hat R\left(D_1\nu\left\vert \nabla_{\Gamma_1} f
\right\vert^q+D_2\nu\left\vert \nabla_{\Gamma_0} f
\right\vert^q\right)
\end{equation*}
We can choose $J$ sufficiently small such that $2^{q}D_2^\frac{1}{2}<\frac{1}{2}$ in which case we get that \begin{equation*}\sum_{n=0}^\infty\nu (A_n)\leq\left(\sum_{n=0}^\infty(\frac{1}{2})^{n} \right)\hat R\left(D_1\nu\left\vert \nabla_{\Gamma_1} f
\right\vert^q+D_2\nu\left\vert \nabla_{\Gamma_0} f
\right\vert^q\right)<\infty
\end{equation*}   From the  Borel-Cantelli lemma (see \cite{Pa1} and \cite{Pa2}), we finally get $$\lim_{n\rightarrow \infty}\mathcal{P}^n f=\lim_{n \text{\ even}, n\rightarrow \infty}\mathcal{Q}^nf=\nu f,  \  \   \nu \  \  a.e. $$
\qed  
 
\section{Conclusion.}
In the present work,  we focus on the $q$ Logarithmic Sobolev Inequality  (LSq) for the infinite dimensional  Gibbs measure related to systems of  spins with values in the Heisenberg group.  

We considered two cases, that of a one site boundary-free measure that satisfies a q Log-Sobolev inequality and that of a one site measure with boundary conditions that satisfies a non uniform U-bound.  In both cases we determined conditions  for the infinite volume Gibbs measure  to satisfy the Log-Sobolev
 Inequality.
 
In this way, the work of \cite{H-Z} was extended to the infinite dimensional setting. In particular we have relaxed the conditions obtained in  \cite{Pa1}  about a similar problem where the case of one site measures $\mathbb{E}^{i,\omega}$ that satisfied an LSq uniformly on the boundary conditions,  where considered.

Furthermore,  the   criterion presented
 in Theorem~\ref{thm2.1Pap2}  can     in particular be applied in the case of local specifications $\{\mathbb{E}^{\Lambda,\omega}\}_{\Lambda\subset\subset \mathbb{Z} ,\omega \in
\Omega}$    with   no quadratic interactions for which $$\left\Vert \nabla_i \nabla_j V(x_i,x_j) \right\Vert_{\infty}=\infty$$
Thus, we have shown that our results  can
go    beyond the usual uniform boundness of the second derivative of the interactions considered in  \cite{Z1}  and \cite{O-R} for real valued variables as well as in \cite{I-P} for spins on the Heisenberg group.

Concerning the additional conditions (C1) placed
here to handle the  interactions, they refer to finite dimensional measures with no boundary conditions which
are easier to handle  than the  $\{\mathbb{E}^{\Lambda,\omega}\}_{\Lambda\subset\subset \mathbb{Z} ,\omega \in
\Omega}$  measures or the
infinite dimensional Gibbs measure $\nu$.


\begin{thebibliography}{0}

  \bibitem{A-B-C} C. An\'e, S. Blach\`ere, D. Chafa\"i, P. Foug\`eres, I. Gentil, F. Malrieu, C. Roberto, G. Scheffer,
 {\em Sur les in\'egalit\'es de Sobolev logarithmiques}, Panoramas et Synth\`eses. Soc. Math,  10, France, Paris  (2000).


  \bibitem{B} D. Bakry, {\em L'hypercontractivit\`e et son utilisation en th\'eorie des semigroups}, S\'eminaire de Probabilit\'es XIX, Lecture Notes in Math., 1581,  Springer, New York, 1-144 (1994).


\bibitem{B-B-B-C}  D. Bakry, F. Baudoin, M. Bonnefont and D. Chafa\"i,
 {\em  On gradient bounds for the heat kernel on the Heisenberg group}, J of Funct Analysis, 255, 1905-1938 (2008).


\bibitem{B-E} D. Bakry and M. Emery,
{\em Diffusions hypercontractives}, S\'eminaire de Probabilit\'es XIX, Springer Lecture Notes in Math., 1123, 177-206 (1985).
 
 
 \bibitem{B-HK} J. Bellisard and R. Hoegn-Krohn,
{\em  Compactness and the maximal Gibbs state for random fields on the Lattice}, Commun.
Math. Phys., 84, 297-327 (1982).
 
  
 

  \bibitem{B-G} S.G.  Bobkov and F. Gotze,
 {\em Exponential integrability and transportation cost
          related to logarithmic sobolev inequalities}
 , J of Funct Anal., 163,  1-28 (1999).
 
\bibitem{B-L} S.G. Bobkov and M. Ledoux,
{\em  From Brunn-Minkowski  to Brascamp-Lieb and to Logarithmic Sobolev Inequalities},
Geom. funct. anal., 10, 1028-1052 (2000).


\bibitem{B-R2} F. Barth and C. Roberto, {\em   Modified  Logarithmic Sobolev Inequality on $\mathbb{R}$}. Potential Anal., 29, 167-193 (2008).

 
   \bibitem{B-R1} F. Barth and C. Roberto,
{\em  Sobolev Inequalities for probability measures on the real line}, Studia Math., 159, 481-497 (2003).
 


\bibitem{B-Z} S.G. Bobkov and B. Zegarlinski,
{\em  Entropy Bounds and Isoperimetry}. Memoirs of the American Mathematical Society,   176,  1 - 69 (2005).
 
 
 
\bibitem{B-H} T. Bodineau and B. Helffer,{\em 
 Log-Sobolev inequality for unbounded spin systems}, J of Funct Anal., 166, 168-178 (1999). 
 
 
 
 \bibitem{B-L-U} A. Bonfiglioni, E. Lanconelli and F. Uguzzoni,
{\em  Stratified Lie groups and Potential Theory for their Sub-Laplacians}, Springer Monographs in Mathematics. Springer, New York  (2007).
 

\bibitem{D-S} J.D. Deuschel and D. Stroock,
 {\em  Large Deviations}, Academic Press, San Diego
(1989).

 \bibitem{D} R. L. Dobrushin,
{\em  The problem of uniqueness of a Gibbs random field and the problem of
phase transition},  Funct.
Anal. Apll., 2, 302-312 (1968).
 
 
 
\bibitem{G-G-M2} I. Gentil,  A. Guillin and L. Miclo,
{\em  Modified logarithmic Sobolev inequalities in null curvature}, Rev. Mat. Iberoamericana, 23(1), 235-258 (2007).
  
 
 
\bibitem{G-G-M1} I. Gentil,  A. Guillin and L. Miclo,
{\em  Modified logarithmic Sobolev inequalities and transportation inequalities}, Probab.Theor. Relat. Fields 133, 409-436 (2005).
 
 \bibitem{G-R} I. Gentil and C. Roberto,
{\em Spectral Gaps for Spin Systems: Some
Non-convex Phase Examples}, J. Func.
Anal.,
180, 66-84 (2001).
 
 \bibitem{Grom} M. Gromov,
{\em  Carnot-Carath\'eodory Spaces seen from within}, In: Belllaiche, A., Risler, J.J. (eds.) Sub-Riemannian Geometry, Progress in Mathematics,  Birkhauser, Boston, 88-324 (1996).
 
\bibitem{Gros} L. Gross,
{\em  Logarithmic Sobolev inequalities}, Am. J. Math., 97, 1061-1083 (1976).
 
\bibitem{G-Z} A. Guionnet  and B. Zegarlinski,
{\em  Lectures on Logarithmic Sobolev Inequalities}, IHP Course 98,  S\'eminaire de Probabilit\'e XXVI, Lecture Notes in
         Mathematics 1801,  1-134, Springer (2003).
 
 
 
 
 
 \bibitem{H} B. Helffer,
{\em  Semiclassical Analysis, Witten Laplacians and Statistical Mechanics}, Partial Differential Equations and Applications. World Scientific, Singapore (2002). 
 \bibitem{H-Z} W.Hebisch  and B.Zegarlinski,
{\em Coercive inequalities on metric measure spaces}, J. Func.
Anal., 258, 814-851 (2010). 
\bibitem{I-P} J. Inglis and I. Papageorgiou,
{\em  Logarithmic Sobolev Inequalities for Infinite Dimensional H\"ormander Type Generators on the Heisenberg Group}, Potential Anal., 31, 79-102 (2009).
 

\bibitem{L} M. Ledoux,
{\em  Concentration of measure and logarithmic Sobolev inequalities},  S\'eminaire de
Probabilit\'es, XXXIII,   Lecture Notes in Math. 1709, Springer-Verlag,   120-216. (1999).
 
 
\bibitem{L-Z} P. Lugiewicz and B. Zegarlinski,
{\em  Coercive Inequalities for H\"ormander type generators in infinite dimensions}, J of Funct Anal,  247 (2), 438-476 (2007).

 \bibitem{Ma}  K.Marton,
{\em  An explicit bound on the Logarithmic Sobolev
constant of weakly dependent random variables}, preprint:arXiv:math/0605397.
        
 


 \bibitem{Mo} R. Monti,
 {\em Some properties of Carnot-Carath\'eodory balls in the Heisenberg group},  Rend. Mat. Acc. Lincei. 11 (9), 155-167 (2000). 
 
 \bibitem{O-R} F. Otto and M. Reznikoff,
{\em  A new criterion for the Logarithmic Sobolev Inequality and two Applications}, J. Func.
Anal., 243, 121-157 (2007).

\bibitem{Pa3} I. Papageorgiou,
{\em  A note on the Modified Log-Sobolev inequality},
 Potential Anal., 35, 275-286 (2011).



\bibitem{Pa2} I. Papageorgiou,
{\em  Concentration  inequalities for Gibbs measures},
Infin. Dimens.
Anal. Quantum Probab. Relat. Top. 14, 1, 79-104 (2011).


\bibitem{Pa1} I. Papageorgiou,
{\em  The Logarithmic Sobolev Inequality in Infinite dimensions for Unbounded Spin Systems on the Lattice with non Quadratic Interactions},
Markov Proc. Related Fields, 16, 447-484 (2010).


\bibitem{Pr} C.J.Preston, {\em   Random Fields}, LNM 534, Springer (1976).
 
  \bibitem{R-Z} C. Roberto and B. Zegarlinski,
{\em  Orlicz-Sobolev inequalities for sub-Gaussian measures and ergodicity of Markov semi-groups}, J. Func.
Anal.,
243 (1), 28--66 (2007).
 
  \bibitem{R} O. Rothaus,
{\em  Analytic inequalities, isoperimetric inequalities and logarithmic Sobolev inequalities}, J. Func.
Anal.,
64, 296-313 (1985).


\bibitem{V-SC-C} N.T. Varopoulos, L. Saloff-Coste  and T. Coulhon,
{\em  Analysis and geometry on groups}, Cambridge Tracks in Mathematics, 100, Cambridge University Press, Cambridge (1992).

 
      
 
 
 \bibitem{Y} N.Yoshida,
{\em  The log-Sobolev inequality for weakly coupled lattice field}, Probab.Theor. Relat. Fields 115 , 1-40 (1999). 
 
\bibitem{Z2} B. Zegarlinski,
{\em  The strong decay to equilibrium for the
stochastic dynamics
of unbounded spin systems on a lattice}, Comm. Math.
Phys. 175, 401-432 (1996).
   
\bibitem{Z1} B. Zegarlinski,
{\em  On log-Sobolev Inequalities for Infinite Lattice Systems}, Lett. Math. Phys. 20, 173-182 (1990).
 
 

\end{thebibliography}
\end{document}